 \theoremstyle{plain}
 \newtheorem{thm}{Theorem}[section]
 \newtheorem{lem}[thm]{Lemma}
 \newtheorem{prop}[thm]{Proposition}
 \theoremstyle{definition}
 \numberwithin{equation}{section}
\newcommand{\N}{\mathbb{N}}
\newcommand{\R}{\mathbb{R}}
\newcommand{\BUC}{{\rm BUC\,}}
\newcommand{\bc}{C_{\rm b}}
\newcommand{\al}{\alpha}
\newcommand{\ep}{\varepsilon}
\newcommand{\lam}{\lambda}
\newcommand{\sig}{\sigma}
\newcommand{\ol}{\overline}
\newcommand{\pl}{\partial}
\def\lan{\langle}
\def\ran{\rangle}
\def\limssup{\mathop{\rm limsup\!^*}}
\def\fr#1#2{\dfrac{#1}{#2}}
\def\XXint#1#2#3{{\setbox0=\hbox{$#1{#2#3}{\int}$}
     \vcenter{\hbox{$#2#3$}}\kern-.5\wd0}}
\def\bye{\end{document}} \def\by{\end{proof}\end{document}}
\def\R{{\mathbb R}}
\def\N{{\mathbb N}}
\def\mid{\,:\,}
\DeclareMathOperator{\dist}{dist}
\def\1{\mathbf{1}}
\def\gth{\theta}
\def\beq{\begin{equation}}
\def\eeq{\end{equation}}
\def\bproof{\begin{proof}}
\def\eproof{\end{proof}}
\def\bcases{\begin{cases}}
\def\ecases{\end{cases}}
\def\bprop{\begin{prop}}
\def\eprop{\end{prop}}
\def\blem{\begin{lem}}
\def\elem{\end{lem}}
\date{\today}
\def\ep{\varepsilon}
\def\gO{\varOmega}
\def\tim{\times}
\def\ga{\alpha}
\def\gd{\delta}
\def\gl{\lambda}
\def\pl{\partial}
\def\gs{\sigma}
\def\fr{\frac}
\def\ol{\overline}
\def\disp{\displaystyle}
\def\erf{\eqref}
\def\gL{\Lambda}
\begin{document}
\title[On the Langevin equation with variable friction]
{On the  Langevin equation with variable friction}
\author[H.~Ishii, P.~E.~Souganidis and H.~V.~Tran]{Hitoshi~Ishii$^{1,*}$, Panagiotis~E.~Souganidis${}^{2}$ 
and Hung~V.~Tran${}^{3}$}
\thanks{${}^*$ Corresponding author}
\thanks{${}^{1}$ Faculty of Education and Integrated Arts and Sciences, Waseda University, Nishi-Waseda, Shinjuku, Tokyo 169-8050, Japan. 
Partially supported by the KAKENHI No. 26220702,
and No. 16H03948, JSPS} 
\thanks{${}^{2}$ Department of Mathematics, The University of Chicago, 5734 S. University Avenue, Chicago, IL 60657, USA. 
Partially supported by the National Science Foundation Grants DMS-1266383 and DMS-1600129 and the Office for Naval Research Grant N00014-17-1-2095} 
\thanks{${}^3$ Department of Mathematics, 
The University of Wisconsin-Madison, 480 Lincoln Drive
Madison, WI 53706, USA. Partially supported by the  National Science Foundation Grants DMS-1615944 and  DMS-1664424. This work began while at The University of Chicago as Dickson Instructor. }

\markboth{Ishii, Souganidis and Tran}{Langevin equation with variable friction}
 \keywords{Langevin equation; Smoluchowski-Kramers approximation; 
asymptotic behavior;  diffusion processes; viscosity solutions}
\subjclass[2010]{
35B40, %Asymptotic behavior of solutions, 
35B25, %Singular perturbations
49L25%Viscosity solutions
}
\email{hitoshi.ishii@waseda.jp (Hitoshi Ishii),
souganidis@math.uchicago.edu (Panagiotis E. Souganidis), 
hung@math.wisc.edu (Hung V. Tran)}

\begin{abstract}
We study two asymptotic problems for the Langevin equation with variable friction coefficient. The first is the small mass asymptotic behavior, known as the  Smoluchowski-Kramers approximation, 
of  the Langevin equation with strictly positive variable friction. 
The second result is about the limiting behavior of the solution when the friction vanishes in regions of the domain. 
Previous works on this subject  considered  one dimensional settings with the conclusions based on explicit computations.  
\end{abstract}

\maketitle

%\tableofcontents

%%%%%%%%%%%%%%%%%%%%%%%%%%%%%%%%%%%%%%%%%%%%%%%%%%%%%%%%%%%%%%%%%%%%%%%%%%%%%%%%%%%%%%%%%%%%%%%%%%%%%%%%%%%%%%%%%%%%%%%%%%%%%%%%%%%%%%%%%%%%%%%%%%

\section{Introduction}
\noindent The first topic in this paper 
is the  study of the small mass asymptotic behavior, known as  the  Smoluchowski-Kramers approximation, 
of the generalized Langevin equation. 
The latter describes the motion, with variable strictly positive friction coefficient  $\lam$, of a particle of mass $\mu$ in a force field $b$ which subject to random fluctuations modeled by a Brownian motion $W$ with diffusivity $\sigma$, which  represent random collisions of the given particle with other particles in the fluid. 
%and variable strictly positive friction coefficient  $\lam$. 
\smallskip

More precisely, we consider the behavior as $\mu \to 0$ of the solution ${x}^{\mu}$ to 
\begin{equation}\label{lv-1}
\mu \ddot{x}^{\mu} = b(x^{\mu})-\lam(x^{\mu}) \dot x^{\mu}
+\sig(x^{\mu}) \dot W,
\quad x^{\mu}(0)=x \in\R^n, \quad \dot x^{\mu}(0)=p \in \R^n. 
\end{equation}
%with a non-degenerate diffusion matrix $(a_{ij})=\sigma \sigma^t$. 
The result is that, under some regularity assumptions on $b, \sigma$ and $\lam$,  and for every $T>0$ and $\delta >0$, 
\begin{equation}\label{takis1}
\lim_{\mu \to 0} \mathbb P(\max_{0\leq t\leq T}|x^{\mu}(t) -x (t)| >\delta)=0,
\end{equation}
with $x$ evolving by the It\^o  stochastic differential equation
\begin{equation}\label{takis2}
d x= \frac{1}{\lam (x)} b(x)dt + \frac{1}{\lam (x)}\sigma(x)d{W}, \quad x(0)=x \in \R^n.
\end{equation}

We prove \eqref{takis1} by studying the pde governing the law of $x^\mu$ and showing that, as $\mu \to 0$, its solutions converge 
to solutions to the pde of the law of $x$.  
\smallskip

To this end, we rewrite \eqref{lv-1} as 
\begin{equation}\label{lv-sys}
\begin{cases}
\dot x^\mu=y^\mu,\\
\dot y^\mu=\dfrac{1}{\mu}\left(b(x^\mu)-\lam(x^\mu)y^\al\right)+\dfrac{1}{\mu} \sig(x^\mu) \dot W. 
\end{cases}
\end{equation}
The  generator $L^\mu$ of the law of the diffusion process $(x^\mu,y^\mu)$ is, with $a=\sigma\sigma^t$, %governed by the generator $L^\mu$
\begin{equation}\label{L}
L^\mu u(x,y):=\dfrac{1}{2\mu^2} a_{ij}(x)u_{y_i y_j} +\dfrac{1}{\mu}(b_i(x)-\lam(x)y_i)u_{y_i} + y_i u_{x_i}.
\end{equation}
%where $a=\sigma\sigma^t$. 
%Here and henceforth we assume that for some constant $\gth>0$,x
%\beq\label{gth}
%a_{ij}(x)\xi_i\xi_j\geq \gth|\xi|^2 \ \ \ \text{ for } \xi\in \R^n. 
%\eeq
%\smallskip
The claim (see Theorem~2.1) is that solutions $u^\mu=u^\mu (x,y,t)$ to $u^\mu_t=L^\mu u^\mu$ converge, as $\mu \to 0$ and locally uniformly, to solutions $u=u(x,t)$ to $u_t=Lu$, where 
\begin{equation}\label{takis3}
Lu:=\dfrac{1}{2\lam(x)}a_{ij}(x)\left( \dfrac{u_{x_i}}{\lam(x)} \right)_{x_j}+\dfrac{1}{\lam(x)} b_i(x)u_{x_i}.
\end{equation}  
A result of this type  was shown by Freidlin and Hu \cite{FrH1} under some simplifying assumptions, for example $a\equiv 1$ 
by exact computations.  
\smallskip

The second topic of the paper is the study of the limiting generator at places where the friction vanishes. This question was raised 
by Freidlin, Hu and Wentzell \cite{FrHW}, who considered that problem in one dimension with $a\equiv 1$ and found an explicit solution.  Assuming  that the nonnegative friction vanishes in some compact %domain
region, \cite{FrHW} approximates $\lambda$ by  $\lambda + \ep$ and studies the behavior of the solutions as $\ep\to 0$.
\smallskip

Motivated by \cite{FrHW}, we consider the general boundary value problem 
\begin{equation}\label{takis4}  %\tag*{{\rm(E)$_\ep$}}
%\begin{cases}
-a_{ij}(x) \left ( \dfrac{u^\ep_{x_i}}{\lam+\ep} \right )_{x_j}-2b_i u^\ep_{x_i}= 0 \  \text{in} \  U, \quad  u^\ep=g  \ \text{on} \ \partial U,
%-\text{tr}\left(a(x)D\left(\frac{Du^\ep}{\lam(x) +\ep}\right) \right)- 2b(x)\cdot Du^\ep=0 \  \text{in} \  U, \\[1mm]
%u^\ep=g  \ \text{on} \ \partial U,
%\end{cases}
\end{equation}
in a domain $U\subset \R^n$  (all the precise assumptions are stated later in the paper) and $\lambda \equiv 0$  in $V \subset U$ and strictly positive in $\ol U\setminus \ol V$.
%note that in \eqref{takis4} and henceforth we are using the summation convention. 
\smallskip

The result (see Theorem~\ref{thm:conv}) is  that, as $\ep \to 0$ and uniformly in $\ol U$, $u^\ep \to u$, the unique viscosity solution to 
\beq\label{outside}
\begin{cases}
-a_{ij}(x) \left ( \dfrac{u_{x_i}}{\gl} \right )_{x_j}-2b_i u_{x_i}= 0 \  \text{in}\ U\setminus \ol V, \quad u=g \  \text{on} \ \partial U,\\[1mm]
%-\text{tr}\left(a(x)D\left(\frac{Du}{\lam(x)}\right) \right)- 2b(x)\cdot Du=0 \  \text{in} \  U\setminus \ol V,\\[1mm]
%\eeq
%\beq\label{dc}
%u=g \  \text{on} \ \partial U,\\[1mm] 
%\eeq
%\beq\label{inside}
%\begin{cases}
%\displaystyle  \\[1mm]
%-\text{tr}\left(a(x)D^2u\right)=0  \ \text{in} \ V,\\[1mm]
%a(x)Du\cdot \nu =0 \ \text{on} \ \partial V,\\[1mm]
%\vskip.125in
-a_{ij}u_{x_i x_j}=0 \ \text{ in} \ V \ \ \ \text{ and }  \quad a_{ij}u_{x_i}\nu_j=0 \ \text{on} \ \partial V,\\[2mm]
%\end{cases}
%\\[4mm]
%\eeq
%\beq\label{balance}
{\displaystyle  \int_{\pl U}} \cfrac{a_{ij}u_{x_i}\nu_j m}{\gl}d\gs=0,
%\int_{\pl U}  \lam(x)^{-1} a(x)Du\cdot \nu m d\gs=0,
\end{cases}
\eeq
where $m\in C(\ol U)$ is the unique solution of an appropriate adjoint problem and $\nu$ denotes  the external normal vector to $V$ and $U$.
\smallskip

%The earlier work on this problems assumed that $a\equiv 1$ and $n=1$, in which case the divergence form structure of the problem allows for an explicit solution.
\subsection*{Organization of the paper} In the next section we introduce the precise assumptions and state the main results. Section~3 is devoted to the proof of the small mass approximation. In Section~4, we prove the result about the degenerate friction. 
In Section~5, we study the adjoint problems that play an  important role in the proofs in Section~4 and in identifying the limit. 
In Section~6, we give a brief explanation how to apply the standard theory of existence and uniqueness of solutions to the 
initial value problem for $u_t=L^\mu u$.  
%%%%%%%%
\subsection*{Terminology and Notation} Depending on the context throughout the paper solutions are either classical or in the viscosity sense. In particular the boundary value  problem $$-a_{ij}u_{x_i x_j}=0 \ \text{ in} \ V \ \ \ \text{ and } \quad  a_{ij}u_{x_i}\nu_j=0 \ \text{on} \ \partial V,$$
is interpreted in the viscosity sense, that is, in the case of 
subsolution, for instance,  
%\[
%\begin{cases}
%-a_{ij}u_{x_i x_j}=0 \ \text{ in} \ V\\[1mm]
% \min (-a_{ij}u_{x_i x_j}, a_{ij}u_{x_i}\nu_j) \leq 0 \ \text{and } \ \max (-a_{ij}u_{x_i x_j}, a_{ij}u_{x_i}\nu_j) \geq 0 \ \text{on } \ \pl V.
%\end{cases}
%\]
\[-a_{ij}u_{x_i x_j}\leq 0 \ \text{ in} \ V \ \ \ \text{ and } \quad  
\min(-a_{ij}u_{x_i x_j},\,a_{ij}u_{x_i}\nu_j)\leq 0 \ \text{on} \ \partial V.
\] 
Given $O\subset \R^k$ for some $k$, $\BUC(O)$ is the space of bounded uniformly continuous functions on $O$; its norm is denoted by $\|\cdot\|$. Moreover, $\bc(O)$ and $\bc^m(O)$ with $m\in\N$ 
denote respectively the spaces of the bounded continuous functions on $O$ and 
the  functions in $\bc(O)$ with bounded continuous derivatives up to order $m$. Their respective  norms are $\|\cdot\|_{C(O)}$ and 
$ \|\cdot\|_{C^m(O)}$.  When possible, the dependence on the domain of the spaces in the norms is omitted. 
\smallskip

If $f_\ep: A\to \R$, where $A\subset \R^k$, is such that $\sup_{\ep\in (0,1)}\|f_\ep\|<\infty$,  the generalized %(relaxed half)
(relaxed) upper and lower limits $f^+$ and $f^-$  are given respectively by
$$f^+(x):=\limssup_{\ep \to 0} f_\ep(x):=\limsup_{\ep \to 0, x' \to x} f_\ep(x') \ \text{and} \ f^-(x):=\liminf_{\ep \to 0}{\kern-3pt}_* f_\ep(x):=\liminf_{\ep \to 0, x' \to x} f_\ep(x').$$
%Finally, given $O\subset \R^k$ for some $k$, $\BUC(O)$ is the space of bounded uniformly continuous functions on $O$,
%and $\bc(O)$ (resp. $\bc^m(O)$, with $m\in\N$) 
%denotes the space of bounded continuous functions 
%(resp. with bounded continuous derivatives up to order $m$) on $O$. 
Finally, $O(r)$ denotes various functions of $r\geq 0$ such that 
%which are  independent of $y$, $\mu$ and $K$, 
 $|O(r)|\leq Cr$ for all $r\ge 0$ for some  constant $C>0$ which is independent of the various parameters in  the specific context. %problem. 
\smallskip

Throughout the paper in writing equations we use the summation convention.  

\section{The assumptions and the results} 

\subsection*{Small mass approximation.} In the first part of the paper we assume that 
\begin{equation}\label{ass1}
%\sigma, a, b, \lam \in C^2(\R^n) \cap C^{0,1}(\R^n),
\sigma, b, \lam, D\gl \ \text{ are bounded and Lipschitz continuous on }  \R^n,
%\sigma, a, b, \lam \in C^2(\R^n) \cap 
\end{equation}
%where, depending on the context $A=\R^n$ or $A=U,$
%\begin{equation}\label{ass3}
%U, V \ \text{ are  bounded smooth domains and $\bar V\subset U$},
%\end{equation}
and there exist $\Theta \geq \theta >0$ such that,  
for all $x, \xi \in \R^n$, % and $\xi \in \R^n$, 
\beq\label{ass2}
\theta \leq \lam(x) \leq \Theta
\eeq
and 
\begin{equation}\label{ass3}
\theta |\xi|^2 \leq a_{ij}(x)\xi_i\xi_j\leq \Theta |\xi|^2. % \ \ \text{ for } \xi\in \R^n;
\end{equation}
%in \eqref{ass3} and henceforth we adopt the summation convention. 
%\smallskip
We remark that we have not tried to optimize our assumptions and some of the results definitely hold with less regularity on the coefficients.
\smallskip

Fix $T>0$ and  given $u^\mu_0 \in \BUC(\R^n\times \R^n)$, 
%where $\BUC(O)$ is the space of bounded uniformly continuous functions on $O\subset \R^k$ for some $k\in \N$,  
let $u^\mu \in %\BUC
\bc(\R^n\times \R^n\times [0,T])$ be the unique (viscosity) solution to the initial value problem 
\beq\label{takis5}
\begin{cases}
%u^\mu_t=\dfrac{1}{2} \text{tr}(aD^2_y u^\mu) + \dfrac{1}{\al}(b(x)-\lam(x)y)\cdot D_y u^\mu + y \cdot D_x u^\mu  \ \text{in} \ \R^n\times \R^n \times (0,T],\\[1mm]
u^\mu_t=\dfrac{1}{2\mu^2} a_{ij}(x)u^\mu_{y_i y_j}+\dfrac{1}{\mu}(b_i(x)-\lam(x)y_i)u_{y_i}^\mu + y_{i} u_{x_i}^\mu  \ \text{in} \ \R^n\times \R^n \times (0,T),\\[2mm]
u^\mu(\cdot, \cdot, 0)= u^\mu_0  \ \text{in} \  \R^n\times \R^n.
%,
\end{cases}
\end{equation}
%which,  in view of \eqref{ass1}, \eqref{ass2}, \eqref{ass3}, exists by the classical theory of viscosity solutions; see the  ``User's Guide'' \cite{CIL}. 
Because  the coefficients $\gl(x)y_i$ are not globally Lipschitz continuous on $\R^n\tim\R^n$, \erf{takis5} is a bit out of scope 
of the classical theory of viscosity solutions (see \cite{CIL}).  Nevertheless, 
in view of \eqref{ass1} and \erf{ass2},  %, \eqref{ass2}, \eqref{ass3}, 
there exists 
a unique viscosity solution of \erf{takis5}.   
%although, as the coefficients $\gl(x)y_i$ are not globally Lipschitz continuous on $\R^n\tim\R^n$, \erf{takis5} is a bit out of scope 
%of the classical theory of viscosity solutions (see \cite{CIL}).  
We  discuss this issue  briefly (see Theorem~\ref{ce}) in Section~6.
\smallskip

The result about the small mass approximation is stated next.

\begin{thm} \label{takis101} Suppose \eqref{ass1}, \eqref{ass2} and \eqref{ass3}. Assume that  $u^\mu_0 \in \BUC(\R^n\times \R^n)$ is such that $\lim_{\mu \to 0} \sup_{x,y \in \R^n}|u^\mu_0(x,y)-u_0(x)|=0.$ 
%as $\mu \to 0$ and uniformly on $\R^n\times \R^n$, $u^\mu \to u_0 \in \BUC(\R^n)$. 
Then, as $\mu \to 0$ and locally uniformly on $\R^n\times \R^n \times [0,T)$, $u^\mu \to u \in \BUC(\R^n \times [0,T])$, where $u$ is the unique solution to 
\begin{equation}\label{lim-u}
\begin{cases}
u_t=\dfrac{1}{2\lam(x)}a_{ij}(x) \left( \dfrac{u_{x_i}}{\lam(x)}\right)_{x_j} +\dfrac{1}{\lam(x)} b_i(x)u_{x_i} \ \text{in } \R^n \times (0,T),\\[2mm]
u(\cdot, 0)=u_0 \ \text{in} \  \R^n.
\end{cases}
\end{equation}
\end{thm}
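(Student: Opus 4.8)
The plan is to use the theory of viscosity solutions for second-order degenerate parabolic equations, specifically the Barles--Perthame half-relaxed limits method combined with a perturbed test function argument. Since the coefficient $\lam(x)y_i$ is not globally Lipschitz, one must first record (from Section~6, Theorem~\ref{ce}) that $u^\mu$ exists, is unique, and satisfies a uniform-in-$\mu$ bound $\|u^\mu\|_{L^\infty(\R^n\times\R^n\times[0,T])}\le \sup_\mu\|u^\mu_0\|$, which follows from the comparison principle since constants are solutions. This gives us a family bounded uniformly in $\mu$, so the relaxed upper and lower limits $u^+ := \limssup_{\mu\to0} u^\mu$ and $u^- := \liminf_{\mu\to0}{}_* u^\mu$ (in the notation of the excerpt) are well-defined, bounded, respectively USC and LSC on $\R^n\times\R^n\times[0,T]$. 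The goal is then to show: (i) $u^+$ is a viscosity subsolution and $u^-$ a viscosity supersolution of $u_t = Lu$ in $\R^n\times(0,T)$ in the sense that they do not depend on $y$ and, as functions of $(x,t)$, satisfy the limiting PDE; (ii) $u^+(\cdot,\cdot,0)\le u_0 \le u^-(\cdot,\cdot,0)$; then comparison for \eqref{lim-u} forces $u^+\le u^-$, hence $u^+=u^-=u$ is the unique solution and convergence is locally uniform.

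The heart of the matter is step (i), and the main obstacle is the singular $1/\mu^2$ and $1/\mu$ terms in $L^\mu$, which must be handled by constructing a corrector. The natural test function is a perturbation $\phi(x,t) + \mu\, \psi(x,y,t) + \mu^2 \chi(x,y,t)$ where $\phi$ is a smooth test function touching $u^+$ from above at an interior point $(x_0,y_0,t_0)$, and $\psi,\chi$ are chosen to cancel the leading-order singular contributions. Plugging in and collecting powers of $\mu$: the $\mu^{-2}$ term demands $\tfrac12 a_{ij}(x)\psi_{y_iy_j}=0$; the $\mu^{-1}$ term demands, after including $\chi$, that $\tfrac12 a_{ij}\chi_{y_iy_j} + (b_i - \lam y_i)\psi_{y_i} + $ (something from $\phi$) balance; one is led to take $\psi$ linear in $y$, say $\psi = -\tfrac{1}{\lam(x)} y_i \phi_{x_i}(x,t)$ (or a close variant), so that $(b_i-\lam y_i)\psi_{y_i}$ produces the drift $\tfrac{1}{\lam}b_i\phi_{x_i}$ together with $-y_i\phi_{x_i}$, the latter cancelling the $y_i u_{x_i}$ transport term at order $\mu^0$; the remaining second-order-in-$x$ piece must reproduce $\tfrac{1}{2\lam}a_{ij}(\phi_{x_i}/\lam)_{x_j}$, which is where the exact form of the limiting operator \eqref{takis3} emerges and where the Lipschitz bound on $D\lam$ in \eqref{ass1} is used. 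A key technical point is that since $y_0$ is a free maximum point of a function that is, to leading order, a nondegenerate concave quadratic in $y$ (because of \eqref{ass3}), the maximum is attained near $y_0 \approx$ a value determined by $\phi$, and one gets the needed information by evaluating the sub/supersolution inequality at that point; one also needs an a priori argument that $u^\pm$ are independent of $y$, which comes from the fast $y$-dynamics — formally, freezing $x$, the $y$-process $\dot y = \tfrac1\mu(b(x)-\lam(x)y) + \tfrac1\mu\sigma(x)\dot W$ is an Ornstein--Uhlenbeck process equilibrating on time scale $\mu$, so in the limit $u$ cannot oscillate in $y$; rigorously this is shown by choosing test functions with strictly convex/concave dependence on $y$ and deriving a contradiction unless $\partial_y u^\pm = 0$.

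For step (ii), the initial layer, one uses the hypothesis $\sup_{x,y}|u^\mu_0(x,y)-u_0(x)|\to 0$ together with a barrier: fix $\eta>0$, let $w$ solve the limiting equation \eqref{lim-u} with initial data $u_0+\eta$ (or use an explicit supersolution of the form $u_0(x)+\eta + Ct + (\text{term controlling }y)$); a standard construction of a local-in-time supersolution of $u^\mu_t = L^\mu u^\mu$ that dominates $u^\mu_0$ and converges to $u_0$ as $\mu\to 0, t\to 0$ yields $u^+(x,y,0)\le u_0(x)+\eta$ for all $\eta$, and symmetrically for $u^-$. The construction of such barriers must accommodate the $1/\mu$ terms, e.g. by adding $\mu|y|^2$-type corrections or a term $K(1-e^{-t/\mu})$; this is routine but requires care because of the unbounded-in-$y$ coefficient. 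Finally, once $u^+=u^-=:u$ and $u$ is shown to be the viscosity solution of \eqref{lim-u} with $u(\cdot,0)=u_0$, uniqueness for \eqref{lim-u} — which holds because the operator $L$ in \eqref{takis3} is uniformly elliptic with Lipschitz coefficients by \eqref{ass1}--\eqref{ass3}, so the standard comparison principle applies — gives that the full family converges, and the equality of the USC and LSC relaxed limits upgrades pointwise convergence to locally uniform convergence on $\R^n\times\R^n\times[0,T)$, with $u\in\BUC(\R^n\times[0,T])$ inherited from the regularity of the limiting problem. I expect the corrector algebra at order $\mu^0$, confirming that the remainder is exactly the operator \eqref{takis3} and not merely bounded, to be the step demanding the most care.
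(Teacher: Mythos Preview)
Your overall framework---half-relaxed limits plus a perturbed test function---matches the paper, and your corrector ansatz $\phi+\mu\psi+\mu^2\chi$ with $\psi$ linear and $\chi$ quadratic in $y$ is exactly what the formal expansion in Section~3 produces. But there is a genuine gap at the localization step. The quadratic-in-$y$ part of your perturbed test function has coefficient matrix $\tfrac{\mu^2}{2\lam}\bigl(\phi_{x_i}/\lam\bigr)_{x_j}$, which depends on $D^2\phi$ and is \emph{not} sign-definite; your appeal to \eqref{ass3} is misplaced, since \eqref{ass3} concerns the ellipticity of $a$, not the corrector coefficient. Consequently $u^\mu-(\phi+\mu\psi+\mu^2\chi)$ need not attain a global maximum on $\R^{2n}\times(0,T)$, and even if it does at some $(x^\mu,y^\mu,t^\mu)$, the residual error after the corrector cancellations contains terms of order $\mu^2|y^\mu|^3$ (coming from $y_i\partial_{x_i}$ acting on the $\mu^2$-corrector) which you cannot control without information on $|\mu y^\mu|$.

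The paper closes this gap by adding a further term $K|\mu y|^3$ to the test function. This (a) forces coercivity in $y$ so that a global maximum exists, and (b) produces, via the drift $-\tfrac{1}{\mu}\lam y_i\,\partial_{y_i}(K|\mu y|^3)=-3K\lam\mu^2|y|^3$, a negative cubic that absorbs the $O(\mu^2|y|^3)$ remainder and yields the key estimate $|\mu y^\mu|=O(\mu^{1/3})$. Consistent with this, the paper also \emph{modifies the definition} of the relaxed limits, taking the $\sup/\inf$ only over $|\mu q|<\gd$ rather than over $|q-y|<\gd$, so that $u^\pm$ are functions of $(x,t)$ from the outset; this is legitimate precisely because $\mu y^\mu\to 0$. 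Your alternative plan to prove $y$-independence by testing with strictly convex functions of $y$ would run into the same difficulty: you would again need to control $|y^\mu|$ at the maximum, and that control requires a device like the cubic penalization. The paper's treatment of the initial layer is also different from your barrier proposal---it shows directly that if $u^+(x_0,0)>u_0(x_0)$ then the viscosity inequality still holds at $(x_0,0)$---but the cubic corrector is the missing idea you need first.
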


As stated in the introduction, a special case of Theorem~\ref{takis101} for $n=1$ was proved in \cite{FrH1}.

\subsection*{Vanishing friction} 
We formulate next the result about the vanishing friction. We assume that for some $\ga\in (0,\,1)$, 
\beq\label{ass4}
\begin{cases}
U  \text{ is  a $C^{2,\ga}$-bounded, connected, open subset of $\R^n$},\\[1mm] % in $\R^n$
%\eeq
%\beq\label{ass5}%\begin{minipage}{0.8\textwidth}
V \text{ is a $C^{2,\ga}$-connected open subset of $U$ such that $\ol V\subset U$,  } 
%\end{minipage}
\end{cases}
\eeq %there exists $\al\in (0,1)$, such that 
\beq\label{ass5}
%\begin{equation}\label{ass1}
a, b, \lam \in C^{2,\al} (\bar U),
\eeq
%where, depending on the context $A=\R^n$ or $A=U,$
%\begin{equation}\label{ass3}
%U, V \ \text{ are  bounded smooth domains and $\bar V\subset U$},
%\end{equation}
there exist $\Theta, \theta >0$ such that,  
for all $x\in \ol U$ and $\xi \in \R^n$, 
\beq\label{ass6}
0\leq \lam(x) \leq \Theta,
\eeq
and
\begin{equation}\label{ass7}
\theta |\xi|^2 \leq a_{ij}(x)\xi_i\xi_j\leq \Theta |\xi|^2,
\eeq
\beq\label{ass7'}
\lambda \equiv 0 \ \text{on} \ \ol V 
 \  \ \text{ and } \ \ \gl>0  \ \text{ on }\ \ol U\setminus \ol V,
\eeq
%\beq\begin{minipage}{0.8\textwidth}
%V \text{ is a $C^{2}$ connected open subset of $U$,  } 
%\end{minipage}\eeq
and, if $d$ is the signed distance function of $\pl V$ given by
\[
d(x):=
\bcases
\dist(x,\pl V) \ \ \text{ if  } \ \ x\in\ol U\setminus V,\\[1mm]
-\dist(x,\pl V) \ \ \text{ if } \ \ x\in V,
\ecases
\]
then
\beq\label{ass8}\left\{\begin{minipage}{0.8\textwidth}
there exist $\gl_0\in C^2(\R)$ and $C_0>0$ such that
 $\gl_0\equiv 0$ on $(-\infty, 0],$ and 
$\gl_0(r)>0$,
$\gl_0' (r)\geq 0$ 
and   $r\gl_0'(r)\leq C_0\gl_0(r)$ for $r\in (0,\infty)$, and  
\[
\gl(x)=\gl_0(d(x)) \ \ \text{ in a neighborhood of $\pl V$ in $U\setminus \ol V$.}
\]
\end{minipage}
\right.
%\end{minipage}
\eeq
Assumption \erf{ass8} is crucial 
in Lemmas \ref{l0.1} and \ref{l0.2} below. 
In what follows, one may replace $d$ by a defining function $\rho\in C^2(\R^n)$ of $V$, that is, $\rho \in C^2$ such that   $\rho<0$ in $V$,  $\rho>0$ in  $\R^n\setminus \ol V$,  and $D\rho \not=0$ 
on $\pl V$.  Finally, as before,  we remark that here we are not trying to optimize  the assumptions. 
\smallskip

We study the behavior, as $\ep \to 0$,  of the solution $u^\ep$ to \eqref{takis4} with  
\beq\label{ass9}
g \in C^{2,\al}(\ol U). 
\eeq

An important ingredient  of  our analysis  is the study of the asymptotic behavior of the solution $m^\ep$  of the ``adjoint'' problem
\beq \label{Ade} \tag*{(Ad)$_\ep$}
\bcases \disp
-\left(\fr{(a_{ij}m^\ep)_{x_i}}{\gl+\ep}-2b_j m^\ep\right)_{x_j}=0 \ \text{ in } \ U\\[3pt] \disp
\left(\fr{(a_{ij}m^\ep)_{x_i}}{\gl+\ep}-2b_j m^\ep\right)\nu_j=0 \  \text{ on }\pl U\\[3.5pt] \disp
\int_U m^\ep dx=1  \  \text{and}  \ m^\ep>0 \ \text{in} \ \ol U,
\ecases
\eeq
where %$m^\ep$ represents the unknown function 
$\nu$ denotes the outward unit normal vector to  $\ol U$. 
\smallskip

The limit problem of \ref{Ade}, as $\ep\to 0$, is % for the unknown $m$ is given by  
\beq\label{Ad1}\tag{Ad1}
\bcases\disp
-\left(\fr{(a_{ij}m)_{x_i}}{\gl}-2b_j m\right)_{x_j}=0 \  \text{ in } \ U\setminus \ol V,&\\ \disp
\left(\fr{(a_{ij}m)_{x_i}}{\gl}-2b_j m\right)\nu_j=0 \  \text{ on } \ \pl U, &\\[3pt] \disp
\int_U m dx=1  \ \text{ and } \ m>0 \ \text{in} \ \bar U,
\ecases
\eeq  
and 
\beq\label{Ad2}\tag{Ad2}
%\bcases 
%-\text{tr}(D^2(am))= 0 \ \text{ in } \ V,\\[1mm]
%D(am)\cdot \nu=0 \  \text{ on }\pl V,
%-(a_{ij}m)_{x_ix_j}=0 \ \ \ \text{ in } V,&\\[1pt]
%(a_{ij}m)_{x_i}\nu_j=0 \ \ \ \text{ on }\pl V,
-(a_{ij}m)_{x_ix_j}=0 \  \text{ in } V \ \ \ \text{ and }\quad (a_{ij}m)_{x_i}\nu_j=0 \ \text{ on }\ \pl V,
%\ecases
\eeq 
where, here,  $\nu$ is  the outward unit normal vector to  $\ol   V$. 
\smallskip

To describe the limiting behavior of the $u^\ep$'s we need the following result which is a consequence of Theorem \ref{thm:Ad} 
below whose proof is provided in Section~5.

\begin{thm} \label{thm:Ad12} Assume \eqref{ass4}, \eqref{ass5}, \eqref{ass6}, \eqref{ass7}, \erf{ass7'} and  \eqref{ass8}. Then there exists a unique solution $m\in C(\ol U)\cap C^2(\ol U\setminus \pl V)$
of \erf{Ad1} and \erf{Ad2}. 
\end{thm}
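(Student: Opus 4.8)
The plan is to construct the solution $m$ by gluing together solutions of two decoupled problems --- one on $U\setminus\ol V$ (the degenerate-drift elliptic equation \eqref{Ad1}) and one on $V$ (the equation \eqref{Ad2}) --- and then to prove that the glued function is $C^2$ across $\pl V$ and that the boundary data on $\pl V$ can be chosen consistently. First I would observe that \eqref{Ad2} in $V$ together with the Neumann-type condition $(a_{ij}m)_{x_i}\nu_j=0$ on $\pl V$ is, after setting $w:=a_{ij}m$ (or more precisely reading it as an equation for the matrix-weighted quantity), a linear second-order elliptic Neumann problem whose solvability requires a compatibility condition; by the divergence theorem, $-(a_{ij}m)_{x_ix_j}=0$ in $V$ with zero conormal flux on $\pl V$ forces the total flux to vanish automatically, so in fact \emph{any} constant solves it, and more generally the solution of \eqref{Ad2} is determined by its boundary trace on $\pl V$ up to the kernel. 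The key point is therefore to see \eqref{Ad2} as prescribing a Dirichlet-to-Neumann (really conormal) relation on $\pl V$, and similarly \eqref{Ad1} as prescribing one on $\pl V$ from the outside; the interior $C^2$-matching at $\pl V$ is what pins down the trace uniquely.

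Concretely, I would proceed as follows. \emph{Step 1: the outer problem.} Show that \eqref{Ad1}, viewed as the formal adjoint of the operator $L^0 v := a_{ij}(v_{x_i}/\gl)_{x_j}+2b_iv_{x_i}$ on $U\setminus\ol V$ with the conormal condition on $\pl U$ and \emph{some} boundary data on $\pl V$, has a unique positive solution once the data on $\pl V$ is fixed; this is classical Schauder theory for uniformly elliptic equations in the $C^{2,\al}$ domain $U\setminus\ol V$, using \eqref{ass5}, \eqref{ass7}, \eqref{ass7'} (so $\gl>0$ there), and $\pl U\in C^{2,\ga}$. The normalization $\int_U m\,dx=1$ is a single linear constraint fixed at the end by scaling. \emph{Step 2: the inner problem.} Solve \eqref{Ad2} in $V$ with the same trace on $\pl V$; again Schauder theory on the $C^{2,\ga}$ domain $V$ gives a $C^2(\ol V)$ solution, unique up to the kernel of the Neumann problem. \emph{Step 3: the matching.} Impose that the $C^2$ functions obtained in Steps 1 and 2 have matching conormal derivatives $a_{ij}m_{x_i}\nu_j$ across $\pl V$ (the value already matches by construction); this is a linear Fredholm-type problem for the unknown trace $\phi\in C^{2,\ga}(\pl V)$, and I expect the crucial structural fact --- that the combined bilinear form is coercive modulo constants, so that a unique $\phi$ and a unique positive normalized $m$ exist --- to follow from the maximum principle together with the sign conditions in \eqref{ass7'}, \eqref{ass8}. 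The reference to Theorem \ref{thm:Ad} indicates that one actually obtains this as the $\ep\to0$ limit of the nondegenerate adjoint problems \ref{Ade}, so an alternative (and probably cleaner) route is: pass to the limit in $m^\ep$, using uniform $C^{2,\ga}_{\mathrm{loc}}(\ol U\setminus\pl V)$ estimates away from $\pl V$ (Schauder, since $\gl+\ep$ is bounded below there uniformly away from $\pl V$... ) and uniform $L^1$/uniform-integrability bounds near $\pl V$ coming from assumption \eqref{ass8}, then identify the limit as the unique solution of \eqref{Ad1}--\eqref{Ad2}.

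\emph{Uniqueness} I would prove by a duality/energy argument: if $m_1,m_2$ both solve \eqref{Ad1}--\eqref{Ad2}, then $m:=m_1-m_2$ solves the homogeneous version with $\int_U m\,dx=0$; testing against the (unique up to constants) solution of the homogeneous \emph{direct} problem --- i.e. against the solution $v$ of $L^0v=0$ in $U\setminus\ol V$, $-a_{ij}v_{x_ix_j}=0$ in $V$ with the matching/flux conditions, whose existence and rigidity ($v\equiv\text{const}$) come from the strong maximum principle and Hopf's lemma applied successively in $V$ and in $U\setminus\ol V$ --- forces $m\equiv0$. The main obstacle, and the step requiring genuine care rather than routine elliptic theory, is the behavior near $\pl V$, where $\gl=\gl_0(d)$ vanishes: the weight $1/\gl$ in \eqref{Ad1} blows up, so the conormal trace of $m$ from the outside is not obviously finite, and the correct interface condition ($C^2$ matching, or finiteness of $a_{ij}m_{x_i}\nu_j$) has to be extracted from the precise degeneracy rate encoded in \eqref{ass8} (the conditions $\gl_0'\ge0$ and $r\gl_0'(r)\le C_0\gl_0(r)$). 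This is presumably exactly what Lemmas \ref{l0.1} and \ref{l0.2} are for, and the bulk of the real work --- uniform barriers near $\pl V$ controlling $m^\ep$ and its normal derivative --- lives there; granting those lemmas, the rest of Theorem \ref{thm:Ad12} is assembled from standard Schauder estimates, compactness, the strong maximum principle, and the duality argument sketched above.
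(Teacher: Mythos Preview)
Your proposal contains two genuine gaps. First, you target the wrong regularity: the theorem only asserts $m\in C(\ol U)\cap C^2(\ol U\setminus\pl V)$, so $m$ is merely \emph{continuous} across $\pl V$ and there is no $C^1$ matching of conormal derivatives to impose; your Step~3 therefore sets up the wrong interface problem. Relatedly, constants do not in general solve \eqref{Ad2}: for $m\equiv c$ the left-hand side is $-c\,a_{ij,x_ix_j}$, which need not vanish. The kernel of \eqref{Ad2} is one-dimensional but spanned by a positive $\psi_0\in C^2(\ol V)$ (Lemma~\ref{exist-psi0}), not by $1$. What actually links the inner and outer pieces is only continuity of $m$ together with the fact that on $\ol V$ every solution of \eqref{Ad2} is a scalar multiple of $\psi_0$.

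Second, your uniqueness argument is vacuous as written: testing $m_1-m_2$ against a constant $v$ yields $0=0$. The paper's proof (Lemma~\ref{unique-m}) is substantially more delicate. One sets $w=cm_1-m_2$ with $c>0$ chosen so that $\min_{\ol U}w=0$; since $w$ restricted to $\ol V$ is a multiple of $\psi_0$, either $w\equiv 0$ or $w>0$ there. The strong maximum principle and Hopf's lemma dispose of the cases where the minimum lies in $U\setminus\ol V$ or on $\pl U$. The remaining case, $w>0$ on $\ol U\setminus\ol V$ and $w\equiv 0$ on $\ol V$, requires an auxiliary construction: one solves Dirichlet problems on $U\setminus\ol V_\gamma$, obtains gradient bounds $|Dv^\gamma|\le C\gl$ on $\pl V_\gamma$ via the barriers built from \eqref{ass8} (Lemma~\ref{lem:Dir}), passes $\gamma\to 0$, and then uses Green's formula together with Hopf's lemma on $\pl U$ to reach a contradiction. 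Your alternative route via $m^\ep\to m$ is indeed what the paper does, but the mechanism near $\pl V$ is not $L^1$/uniform-integrability: one has a uniform $C(\ol U)$ bound on $m^\ep$, Schauder compactness away from $\pl V$, and the identification of the limit on $\ol V$ as a viscosity solution of \eqref{Ad2} (including the Neumann condition on $\pl V$) is carried out via the barrier functions of Lemma~\ref{l0.2} and a relaxed-limit argument (Lemma~\ref{m+-}).
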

The main result is:

\begin{thm}\label{thm:conv} Assume  \eqref{ass4}, \eqref{ass5}, \eqref{ass6}, \eqref{ass7}, \erf{ass7'}, \eqref{ass8} and \eqref{ass9}. For each $\ep>0$ let $u^\ep\in C(\ol U)\cap C^2(U)$ be the unique solution to  \eqref{takis4}. Then, as $\ep \to 0$ and uniformly on $\ol U$, $u^\ep \to u,$ where $u\in C(\ol U)\cap C^2(\ol U\setminus \pl V)$ is the unique solution to 
\beq\label{outside'}
-a_{ij}(x)\left ( \dfrac{u_{x_i}}{\gl} \right )_{x_j}+2b_i u_{x_i}= 0 \  \text{in}\ U\setminus \ol V, 
\eeq
\beq\label{dc}
u=g \  \text{on} \ \partial U, 
\eeq
\beq\label{inside}
-a_{ij}u_{x_ix_j}= 0 \   \text{in} \ V 
\ \ \text{ and } \ \  a_{ij}u_{x_i}\nu_j=0 \  \text{on} \ \partial V,
\eeq
and 
\beq\label{balance}
\int_{\pl U} \fr{a_{ij}u_{x_i}\nu_j m}{\gl}d\gs=0,
\eeq
with  $m\in C(\ol U)$ is  given by Theorem \ref{thm:Ad12}.
\end{thm}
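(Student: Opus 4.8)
The plan is to argue by the standard half-relaxed limits method of viscosity solutions, using the uniform estimates on $u^\ep$ together with the structure of the limit problem \eqref{outside'}--\eqref{balance}, and then to invoke a comparison principle for the limit problem to conclude uniqueness and full (not merely subsequential) convergence. First I would establish a priori bounds: since $g\in C^{2,\al}(\ol U)$ and the operators in \eqref{takis4} are uniformly elliptic on $\ol U\setminus \ol V$ (with ellipticity constant degenerating only through the $\gl+\ep$ in the denominator, which stays bounded below by $\ep$ but whose reciprocal is controlled near $\pl V$ using \eqref{ass8}), one gets $\|u^\ep\|_{C(\ol U)}\le C$ uniformly in $\ep$ by the maximum principle, constructing barriers from $g$ and from the defining function $d$; the condition $r\gl_0'(r)\le C_0\gl_0(r)$ in \eqref{ass8} is exactly what makes a barrier of the form $g\pm K\Phi(d)$ work across $\pl V$. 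With this uniform bound in hand, define the half-relaxed limits $u^+=\limssup_{\ep\to0}u^\ep$ and $u^-=\liminf_{\ep\to0}{}_*\,u^\ep$; both are finite, $u^+$ is USC, $u^-$ is LSC, and $u^-\le u^+$ on $\ol U$.

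Next I would show that $u^+$ is a viscosity subsolution and $u^-$ a viscosity supersolution of the limit problem. In $U\setminus\ol V$ this is the usual stability of viscosity solutions under the locally uniform convergence of the coefficients $1/(\gl+\ep)\to 1/\gl$; on $\pl U$ the Dirichlet data passes to the limit because the barriers force $u^\ep=g$ on $\pl U$ with a modulus uniform in $\ep$. The delicate points are (i) the behavior on $V$ and on $\pl V$, and (ii) the nonlocal balance relation \eqref{balance}. For (i), on $V$ one uses that $\gl+\ep=\ep$ there, so the equation in \eqref{takis4} reads $-\ep^{-1}a_{ij}(u^\ep_{x_i})_{x_j}-(\text{lower order})=0$, i.e. $-a_{ij}(u^\ep_{x_i})_{x_j}=O(\ep)$ after multiplying through, which in the limit yields $-a_{ij}u_{x_ix_j}=0$ in $V$; the conormal condition $a_{ij}u_{x_i}\nu_j=0$ on $\pl V$ emerges in the viscosity sense from the same scaling, testing against smooth functions touching from above/below at points of $\pl V$ and using that the $\ep^{-1}$-weighted term on the $V$-side cannot have the wrong sign — this is precisely the viscosity reading of the Neumann-type condition spelled out in the "Terminology" paragraph. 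For (ii), integrate the equation in \eqref{takis4} against $m^\ep$ (the solution of \ref{Ade}): by the very definition of the adjoint problem, all the interior and $\pl V$-side terms cancel and one is left with $\int_{\pl U}\frac{a_{ij}u^\ep_{x_i}\nu_j\,m^\ep}{\gl+\ep}\,d\gs=0$; passing to the limit using $m^\ep\to m$ (Theorem~\ref{thm:Ad12}, via Theorem~\ref{thm:Ad}) and the uniform control of $u^\ep$ near $\pl U$ (where $\gl>0$, so no degeneracy) gives \eqref{balance} for the limit.

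Then I would invoke uniqueness for the limit problem \eqref{outside'}--\eqref{balance}: a comparison principle showing that if $v$ is a subsolution and $w$ a supersolution then $v\le w$ on $\ol U$. The key is that \eqref{inside} forces any solution to be "$a$-harmonic with reflecting boundary" on the closed set $\ol V$, hence constant-like in the sense that its max and min over $\ol V$ are attained on $\pl V$; combined with the uniformly elliptic problem on $U\setminus\ol V$ with data $g$ on $\pl U$, the solution on $U\setminus\ol V$ is determined up to the single unknown constant $c=u|_{\ol V}$ (more precisely, $u$ in $U\setminus\ol V$ solves a well-posed mixed Dirichlet ($=g$ on $\pl U$) / Neumann ($a_{ij}u_{x_i}\nu_j=0$ on $\pl V$) problem once one prescribes... actually the Neumann condition alone on $\pl V$ under-determines by a constant), and that constant is pinned down by the scalar balance condition \eqref{balance}, which depends monotonically on $c$ because $m/\gl>0$ on $\pl U$ — increasing $c$ increases the outflux $\int_{\pl U}\frac{a_{ij}u_{x_i}\nu_j m}{\gl}d\gs$ strictly. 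This strict monotonicity is what yields uniqueness of $c$, hence of $u$, and it also gives existence by a continuity/intermediate-value argument in $c$ together with the classical solvability of the mixed boundary value problem on $U\setminus\ol V$. Uniqueness then forces $u^+=u^-=:u$, and the equality of the USC and LSC relaxed limits upgrades the convergence to uniform on $\ol U$ in the standard way.

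The main obstacle I expect is the analysis at $\pl V$: both verifying that $u^\pm$ satisfy the coupled equation/conormal condition \eqref{inside} in the viscosity sense (the $\ep^{-1}$ blow-up of the coefficient on the $V$-side must be handled by a careful rescaling argument, exploiting \eqref{ass8} to compare $\gl$ with $\gl_0(d)$ and to control the normal derivative), and proving the comparison principle across the interface where the limit PDE changes character (from uniformly elliptic on one side to the fully degenerate "$-a_{ij}u_{x_ix_j}=0$ with conormal condition" on the other). The nonlocal constraint \eqref{balance} also prevents a purely local comparison argument, so the uniqueness proof must be structured globally as above — first reduce to the scalar unknown $c$, then use monotonicity of the balance functional in $c$. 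These are exactly the places where Theorem~\ref{thm:Ad} and the fine properties of $m$ near $\pl V$ and $\pl U$ (positivity, regularity, the matching of the adjoint conormal conditions) will do the essential work.
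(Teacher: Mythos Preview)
Your overall architecture is close to the paper's, and several ingredients are right: the uniform $C^0$ bound, the half-relaxed limits, the viscosity verification of \eqref{inside} on $\ol V$ via a barrier built from $\gl_0$ and $d$ (this is exactly the paper's Lemma~\ref{l0.1}/Lemma~\ref{lem:E}), the derivation of \eqref{balance} by testing \eqref{takis4} against $m^\ep$, and the Hopf-based monotonicity in the constant $c=u|_{\ol V}$ for uniqueness. But there is a genuine gap in the step where you ``invoke a comparison principle for the limit problem \eqref{outside'}--\eqref{balance}'' to conclude $u^+\le u^-$.

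The problem is that \eqref{balance} is a global integral constraint on $Du$ along $\pl U$; it has no sub/super-solution formulation, so you cannot assert that the merely upper-semicontinuous $u^+$ satisfies (an inequality version of) it. And without \eqref{balance}, the remaining system \eqref{outside'}--\eqref{inside} does \emph{not} enjoy a comparison principle: as you yourself note, the solution is determined only up to the one-parameter family indexed by $c$. So the comparison you plan to invoke simply fails. Relatedly, your description of the boundary condition on $\pl V$ from the outside is muddled --- the Neumann condition $a_{ij}u_{x_i}\nu_j=0$ in \eqref{inside} is the condition seen from inside $V$; from $U\setminus\ol V$ the relevant datum is the (unknown) Dirichlet value $c$ on $\pl V$.

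The paper circumvents this by reversing the order of the argument. It uses interior and boundary Schauder estimates on compact subsets of $\ol U\setminus\pl V$ (where the operator is uniformly elliptic, uniformly in $\ep$) to extract a subsequence with $u^{\ep_k}\to u_0$ in $C^2(\ol U\setminus\pl V)$. This immediately gives $u^+=u^-=u_0$ on $\ol U\setminus\pl V$, and in particular provides the gradient convergence on $\pl U$ needed to pass to the limit in \eqref{balance}. The viscosity argument on $\ol V$ (your point (i)) then shows, via the strong maximum principle, that $u^+$ and $u^-$ are each constant on $\ol V$; since they already agree on the open set $V$, they are the same constant, hence $u^+=u^-$ on all of $\ol U$. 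Only \emph{after} this is the uniqueness of the limit problem invoked --- and there it is used exactly as you describe, via Hopf's lemma and the sign of $\int_{\pl U}\gl^{-1}a_{ij}\phi_{x_i}\nu_j m\,d\gs$ for $\phi=v-w$ --- to upgrade subsequential to full convergence. In short: Schauder away from $\pl V$ is what replaces the unavailable global comparison principle, and \eqref{balance} is an output of the convergence, not an input to it.
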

The meaning of \eqref{inside} was discussed in the subsection about terminology and notation earlier in the paper. 

%%%%%%%%%%%%%%%%%%%%%%%%%%%%%%%%%%%%%%%%%%%%%%%%%%%%%

\section{The small mass approximation}% 

The proof of Theorem~\ref{takis101} is based on a variant of the perturbed test function (see Evans \cites{E1,E2}) and classical 
arguments from the theory of viscosity solutions. 
\subsection*{Formal expansion}
To identify the equation satisfied by the limit of the $u^\mu$'s we postulate the ansatz
\begin{equation}\label{ansatz}
u^\mu(x,y,t)=u(x,t)+\dfrac{\mu y_i}{\lam(x)} v_i(x,t)+\dfrac{\mu^2 y_i y_j}{\lam(x)^2} w_{ij}(x,t)+\dfrac{\mu^3  y_i y_j y_k}{\lam(x)^3} z_{ijk}(x,t)+\cdots
\end{equation}
where $u, v_i, w_{ij}, z_{ijk},\ldots$ are real-valued functions on $\R^n \times [0,\infty)$. 
\smallskip

We assume  that,  for  $1 \le i,j,k,\ldots \le n$, $w_{ij}=w_{ji}$, $z_{ijk}=z_{jik}=\ldots,$ %\ \ldots$ for $1 \le i,j,k,\ldots \le n$.
we insert  \eqref{ansatz} in \eqref{takis5}, we organize in terms of powers of $\mu$  and we equate to $0$ the coefficients  of $O(1)$ and $O(\mu).$ 
\smallskip

From the former we get 
\beq\label{takis20}
%\begin{align*}
u_t=\dfrac{1}{\lam^2} a_{ij} w_{ij}+\dfrac{1}{\lam}(b_i-\lam y_i)v_i+y_i u_{x_i}
=\dfrac{1}{\lam^2} a_{ij} w_{ij}+\dfrac{1}{\lam}b_i v_i+y_i (u_{x_i}-v_i),
\eeq
while from the latter we find 
\beq\label{takis21}
\dfrac{y_i}{\lam}v_{i,t}=\dfrac{3 y_i}{\lam^3} a_{jk} z_{ijk}+\dfrac{1}{\lam^2}(b^i-\lam y_i)2y_jw_{ij}
+y_i y_j \left ( \dfrac{v_i}{\lam} \right )_{x_j}.
%\end{align*}
\eeq
We deduce from \eqref{takis20} that $v_i=u_{x_i}$ for $1 \le i \le n$. Then \eqref{takis21}   can be written , for $1 \le i \le n$, as
$$
v_{i,t}=\dfrac{3}{\lam^2} a_{jk} z_{ijk}+\dfrac{2b_j}{\lam}w_{ij}+ y_j \left (  \lam \left ( \dfrac{u_{x_i}}{\lam}\right)_{x_j}-2 w_{ij}\right ),
$$
which yields that $$w_{ij}=2^{-1}\lam \left (\lam^{-1} u_{x_i} \right)_{x_j}.$$ 
Hence we obtain formally that $u=u(x,t)$ satisfies
\begin{equation}\label{conv-1}
u_t=\dfrac{1}{2\lam(x)}a_{ij}(x) \left ( \dfrac{u_{x_i}}{\lam(x)} \right)_{x_j}+\dfrac{1}{\lam(x)} b_i(x)u_{x_i}.
\end{equation}

\subsection*{The rigorous convergence} We present here the rigorous proof of the asymptotics.

\begin{proof}[Proof of Theorem \ref{takis101}] 
Fix $T>0$ and, without loss of generality, we only consider $\mu\in (0,1).$  
%\smallskip
\vskip.035in
In view of our assumptions, the $u^\mu$'s are bounded on $\R^n\times \R^n \times [0,T]$ uniformly  in $\mu$. To deal with the special (unbounded) dependence of \eqref{takis5} on $y$, we find it necessary 
to modify the definition of the relaxed upper and lower limits, which was introduced earlier.
% in order to take care of the $y$-dependence. 
\smallskip

In particular, taking into account the estimate \eqref{conv-3} 
on $y^\mu$ below, 
we define generalized upper and lower limits $u^+$ and $u^-$  
on $\R^n\tim [0,\,T]$ by
\[
u^+(x,t)=\lim_{\gd\to 0+}\sup\{u^\mu(p,q,s)\mid 0<\mu<\gd, 
|p-x|<\gd, |s-t|<\gd, |\mu q|<\gd\},
\]
and 
\[
u^-(x,t)=\lim_{\gd\to 0+}\inf\{u^\mu(p,q,s)\mid 0<\mu<\gd, 
|p-x|<\gd, |s-t|<\gd, |\mu q|<\gd\},
\]
and prove that they are respectively sub- and super-solutions to \eqref{lim-u}. Since the arguments are almost identical, here we show the details only for the the generalized upper limit. Once the sub- and super-solution properties are established, we conclude, using that  \eqref{lim-u} has a comparison principle, that $u^+=u^-.$ This is a classical result  in the theory of viscosity solutions, hence we omit the  details.
\smallskip

%The generalized upper limit $\bar u$ of the $u^\mu$'s is  defined by 
%$$
%\ol{u}(x,y,t):=\limssup_{\al \to 0} u^\mu(x,y,t):=\limsup_{\al \to 0, (x',y',t') \to (x,y,t)} u^\mu(x',y',t').
%$$
%%In view of proposition \ref{prop:E-a-U}, $\ol{u}$ is well-defined, bounded,  and $\ol{u}(x,0)=\varphi(x)$.
%We  first remark that $ u^+$ is independent of $y$, that is, for all $x,y \in \R^n$ and $t\in [0,T]$, 
%\begin{equation}\label{u-y-van}
%{u}^+(x,y,t)={u}^+(x,t).
%\end{equation}
%Indeed it is immediate  that ${u}^+$ is a bounded subsolution to
%\begin{equation} \label{u-y-sub}
%-a_{ij} {u}^+_{y_i y_j} \leq 0 \ \text{in} \ \R^n\times \R^n\times (0,T],
%\end{equation}
%and the claim follows. %his implies \eqref{u-y-van} immediately.
%\smallskip

We now show that $u^+$ is a viscosity subsolution to (2.5) 
%\eqref{lim-u} 
on $\R^n\tim[0,\,T)$, that is, including  $t=0$.   
To this end, we assume that, for some smooth test function $\phi$, %with bounded derivatives up the to third order,  
${u}^+-\phi$ has a strict global maximum at $(x_0,t_0) \in \R^n \times [0,T)$. 
\smallskip

For the arguments below, it is convenient to assume that, there exists  a compact neighborhood $N$ of $(x_0, t_0)$ such that 
%$\phi$ is constant for all $(x,t)\in (\R^n\tim [0,T])
%\setminus N$, where $N$ is a compact neighborhood of $(x_0, t_0)$,  
%and 
%\[ \tag{5-7-1}
\begin{equation}\label{takis60}
\begin{cases}
\text{$\phi$ is constant for all $(x,t)\in (\R^n\tim [0,T])\setminus N$}, \\[1.5mm]
 \inf_{(\R^n\tim [0,\,T))\setminus N} \phi>2\sup_{0<\mu<1}\|u^\mu\| \ \text{and } \ \phi(x_0,t_0)=0.
\end{cases}
\end{equation}
%\]
%Finally, In what follows we take with $0<\mu<1$. 

%that is, for any neighborhood $V$ of $(x_0,t_0)$,
%\[
%(\ol u-\phi)(x_0,t_0)>
%\sup_{\R^n\times[0,\,\infty)\setminus V}(\ol u-\phi). 
%\] 
\smallskip

We use a perturbed test function type argument to show that,  at $(x_0,t_0)$, 
if $t_0>0$ or if $t_0=0$ and $u^+(x_0,0)>u_0(x_0)$, then  
\begin{equation}\label{conv-2}
\phi_t\le \dfrac{1}{2\lam}a_{ij} \left ( \dfrac{\phi_{x_i}}{\lam} \right)_{x_j}+\dfrac{1}{\lam} b_i\phi_{x_i}.
\end{equation}
First we consider the case $t_0>0$, in which case  we choose  $N$ so that $N\subset \R^n\tim(0,\,T)$. 
%We may assume that $\phi$ is bounded on $??$ together with its derivatives up to 3rd order.
%The idea is to perform some argument related to the perturbed test function and to pass the limit to achieve \eqref{conv-2}. 
\smallskip

We fix some $K>0$ and  replace $\phi$ by 
%by a more convenient one. Let $\al>0$ by 
%and $K>0$, and consider the function
%by 
$$
\psi(x,y,t)=\psi^\mu(x,y,t):=\phi+\dfrac{\mu}{\lam}y_i \phi_{x_i}(x,t)+\dfrac{\mu^2}{2\lam}y_i y_j \left (\dfrac{\phi_{x_i}}{\lam} \right )_{x_j} +K|\mu y|^3.
$$
Straightforward computations together with \eqref{ass2} give % first that
\[
\psi_t(x,y,t)=\phi_t(x,t)+O(|\mu y|+|\mu y|^2),
\]
\[\begin{aligned}
\fr{a_{ij}\psi_{y_i y_j}}{2\mu^2}
&\,=\fr{1}{2 }a_{ij}\left(\fr{1}{\lam}\left(\fr{\phi_{x_i}}{\lam}\right)_{x_j}
+3K\mu (|y|^{-1}y_i y_j +|y|\delta_{ij})\right)
\\&\,=\fr{1}{2\lam}a_{ij} \left(\fr{\phi_{x_i}}{\lam}\right)_{x_j}+O(K|\mu y|),\\[2mm]
\end{aligned}
\]
\[\begin{aligned}
\fr{b_i-\lam y_i}{\mu}\cdot \psi_{y_i}
&=\fr{b_i-\lam y_i}{\mu}\cdot\left(
\mu\fr{\phi_{x_i}}{\lam}+\fr{\mu^2}{2\lam} \left(y_j \fr{\phi_{x_i}}{\lam}
\right)_{x_j} + \fr{\mu^2}{2\lam} \left(y_j \fr{\phi_{x_j}}{\lam}
\right)_{x_i}
+3\mu^3 K|y|y_i\right)
\\&\,=\fr{b_j \phi_{x_i}}{\lam}
-y_j\ \left(\phi_{x_j}
+\mu \left(y_i \fr{\phi_{x_i}}{2\lam}\right)_{x_j}\right)\\&
-3K\mu^2\lam |y|^3
+O\left(\mu|y|+K|\mu y|^2\right),
\end{aligned}
\]
\and
\[\begin{aligned}
y_i\psi&_{x_i}\,=y_i \left(\phi_{x_i}
+\mu \left(y_j \fr{\phi_{x_j}}{\lam}\right)_{x_i}
+\fr{\mu^2}{2\lam}\left(y_ky_l\left(\fr{\phi_{x_k}}{\lam}\right)_{x_l}\right)_{x_i}
\right)
\\&\,=y_i \left(\phi_{x_i}
+\mu \left(y_j\fr{\phi_{x_j}}{\lam}\right)_{x_i}\right)+O(\mu^2 |y|^3);
\end{aligned}
\]
here, $O(r)$ is independent of $y$, $\mu$ and $K$. 
%and $|O(r)|\leq Cr$ for all $r\ge 0$ and some constant $C>0$. 
\smallskip

Combining the above we get, for some  $M>0$ depending only on $\phi$, $b$ and $\gl$, % together yields 
\[\begin{aligned}
   -\psi_t&+\fr{ a_{ij}\psi_{y_iy_j}}{2\mu^2}
+\fr{b_i-\lam y_i}{\mu}\psi_{y_i}
+y_i \psi_{x_i} \\
& \leq  -\phi_t+\fr{1}{2\lam}a_{ij} \left(\fr{\phi_{x_j}}{\lam}\right)_{x_i}
+\fr{b_i\phi_{x_i}}{\lam}
-3K\mu^2\lam |y|^3 +M((K+1)(|\mu y|+|\mu y|^2)+\mu^2|y|^2).
\end{aligned}
\]
Next we observe that $u^\mu-\psi$ has a global maximum on $\R^n\tim\R^n\tim(0,T)$. Indeed, note first that there exists  a constant $R=R^\mu>0$ such that
\begin{equation}\label{takis61}
\inf\{\psi(x,y,t):(x,y,t)\in \R^n\tim\R^n\tim[0,T),\ |y|>R \}
\geq 1+2\|u^\mu\|,
\end{equation}
consider the compact subset of $\R^n\tim\R^n\tim(0,T)$
\[
N^\mu:=\{(x,y,t)\mid (x,t)\in N,\ y\in \ol B_R\},
\]
%Clearly  $N^\mu$ is a compact subset of $\R^n\tim\R^n\tim(0,T)$. 
note that 
\[
\psi(x,y,t)\leq \phi(x,t) \ \ \text{ if } \ (x,t) \in \R^n\tim(0,T)\setminus N,
\]
and, in view of \eqref{takis60} and \eqref{takis61}, if  $(x,y,t)\in\R^n\tim\R^n\tim(0,T)$ and $(x,t)\not\in N$, then  
\[\begin{aligned}
(u^\mu-\psi)(x,y,t) 
&\leq u^\mu(x,y,t)-\phi(x,t)
\leq \|u^\mu\|-\inf_{(\R^n\tim(0,\,T))\setminus N} \phi
\\&-1-\|u^\mu\|\leq -1+u^\mu(x_0,0,t_0)
=-1+u^\mu(x_0,0,t_0)-\phi(x_0,t_0) 
\\&=-1+u^\mu(x_0,0,t_0)-\psi(x_0,0,t_0),
\end{aligned}
\]

and, if $|y|>R$, then 
\[
\begin{aligned}
(u^\mu-\psi)&(x,y,t) 
\\&\leq \|u^\mu\|-\inf\{\psi(p,q,s)\mid (p,q,s)\in \R^n\tim\R^n\tim[0,T),\ |q|>R \}
\\&\leq -1+\|u^\mu\|
\leq -1+u^\mu(x_0,0,t_0)-\psi(x_0,0,t_0).
\end{aligned}
\]
The two inequalities above yield 
\[
\sup_{(\R^n\tim\R^n\tim(0,T))\setminus N^\mu}(u^\mu-\psi)
\leq -1+(u^\mu-\psi)(x_0,0,t_0)<\max_{N^\mu}(u^\mu-\psi),
\]
that is, $u^\mu-\psi$ has a global maximum at some point in
$N^\mu$. 
\smallskip

Let 
$(x^\mu,y^\mu,t^\mu)$ be a global  maximum point of $u^\mu-\psi$.  Then,  at $(x^\mu,y^\mu,t^\mu)$,
%By the definition of viscosity solution, we have 
\[
-\psi_t+\fr{ a_{ij}\psi_{y_iy_j}}{2\mu^2}
+\fr{b_i-\lam y_i}{\mu}\psi_{y_i}
+y_i \psi_{x_i} \geq 0,
%-\psi_t+\fr{\tr \left(aD_y^2\psi\right)}{2\al^2}
%+\fr{b-\lam y}{\al}\cdot D_y\psi
%+y\cdot D_x\psi\geq  0 \ \ \ \text{ at }(x^\mu,y^\mu,t^\mu),
\]
and, hence, always at $(x^\mu,y^\mu,t^\mu)$, 
\[\begin{aligned}
-\phi_t+\fr{1}{2\lam}a_{ij} \left(\fr{\phi_{x_j}}{\lam}\right)_{x_i}
+\fr{b_i\phi_{x_i}}{\lam}
-3K\mu^2\lam |y|^3 \geq M((K+1)(|\mu y|+|\mu y|^2)+\mu^2|y|^3).
%-\phi_t
%+\fr{1}{2\lam}\tr a D_x\left(\fr{D_x\phi}{\lam}\right)
%+\fr{b\cdot D_x\phi}{\lam}
%-3K\al^2\lam |y|^3\geq O((K+1)(|\al y|+|\al y|^2)+\al^2|y|^3).
\end{aligned}
\]
%Thus, using \eqref{ass2}, 
%we find a  $M>0$ such that, at  $(x^\mu,y^\mu,t^\mu)$,
%\[
%-\phi_t+\fr{1}{2\lam}a_{ij} \left(\fr{\phi_{x_j}}{\lam}\right)_{x_i}
%+\fr{b_i\phi_{x_i}}{\lam}
%-3K\mu^2\theta |y|^3\geq -M((K+1)(|\mu y|+|\mu y|^2)+\mu^2|y|^3).
%\]
Choosing $K=\frac{M+1}{3\theta}$ we obtain %so that $3Kc_0=M+1$,  from the above, 
%we get
\begin{equation}\label{sub-i}
-\phi_t+\fr{1}{2\lam}a_{ij} \left(\fr{\phi_{x_j}}{\lam}\right)_{x_i}
+\fr{b_i\phi_{x_i}}{\lam}
\geq -M(K+1)(|\mu y|+|\mu y|^2)+\mu^2|y|^3.
\end{equation}
In particular, for some $C>0$, we find 
\[
\mu^2|y^\mu|^3\leq C(1+|\mu y^\mu |+|\mu y^\mu|^2).
\]
Hence, $|\mu y^\mu| =O(\mu^{1/3})$, and, thus, 
%for some consatnt $C>0$, which  shows that 
%$\lim_{\al\to 0}\al y^\al=0$ and 
%also that 
\begin{equation}\label{conv-3}
\lim_{\mu\to 0}\mu y^\mu=0 \ \  \text{and} \ \ \lim_{\mu\to 0}\left(\psi(x^\mu,y^\mu,t^\mu)-\phi(x^\mu,t^\mu)\right)=0. 
\end{equation}
%Since $(x_0,t_0)$ is a strict global maximum point of $u^+-\phi$, 
%we deduce that 
%$\lim_{\mu\to 0}(x^\mu, t^\mu)=(x_0, t_0)$ and, moreover,
Next we show that there is a sequence $\mu_j \to 0$ such that %{j\in\N}\subset (0,\,1)$ 
%converging to zero such that 
\[
\lim_{j\to \infty}(x^{\mu_j},t^{\mu_j})=(x_0,t_0). 
\]
In view of the definition of $u^+$, 
we may select   a sequence $\{(\mu_j,p_j,q_j,s_j)\}_{j\in\N}
\subset (0,\,1)\tim\R^n\tim\R^n\tim(0,T)$ such that 
\begin{equation} \label{takis62} %\tag{5-7-4}
\lim_{j\to\infty}(\mu_j,p_j,s_j)=(0,x_0,t_0),   \ %\text{and} \ 
 \lim_{j\to \infty}\mu_j q_j=0 \ \text{and} \ \lim_{j\to\infty}u^{\mu_j}(p_j,q_j,s_j)=u^+(x_0,t_0).
\end{equation}
%and 
%\begin{equation}\label{takis64} %\tag{5-7-5}
%\lim_{j\to\infty}u^{\mu_j}(p_j,q_j,s_j)=u^+(x_0,t_0). 
%\end{equation} 
Passing to a subsequence, we may assume that, for some $(\bar x,\bar t)\in N$,
\[
\lim_{j\to\infty}(x^{\mu_j},t^{\mu_j})=(\bar x,\bar t). 
\]
Since $(x^\mu,y^\mu,t^\mu)$ is a global maximum of %the function 
$u^\mu-\psi$, for any $\gd>0$ and  as soon as $|x^{\mu_j}-\bar x|<\gd$, 
$|t^{\mu_j}-\bar t|<\gd$ and $|\mu_j y^{\mu_j}|<\gd$, we have
\begin{equation}\label{takis64} %\[\tag{5-7-3}
(u^{\mu_j}-\psi)(p_j,q_j,s_j)\leq 
(u^{\mu_j}-\psi)(x^{\mu_{j}},y^{\mu_j},t^{\mu_j})
\leq v^\gd(\bar x,\bar t)-\psi(x^{\mu_{j}},y^{\mu_j},t^{\mu_j}) 
\end{equation}
%as soon as $|x^{\mu_j}-\bar x|<\gd$, 
%$|t^{\mu_j}-\bar t|<\gd$ and $|\mu_j y^{\mu_j}|<\gd$,
where $v^\gd$ is defined by 
\[
v^\gd(x,t):=\sup\{u^\mu(p,q,s)\mid |p-x|<\gd, |s-t|<\gd, |\mu q|<\gd\}.
\]
Now, since 
\[
\lim_{j\to \infty}\psi^{\mu_j}(p_j,q_j,s_j)=\phi(x_0,t_0) 
\ \ \text{ and } \ \ \lim_{j\to\infty}\psi^{\mu_j}(x^{\mu_j},y^{\mu_j},t^{\mu_j})=\phi(\bar x,\bar t), 
\]
we find from \eqref{takis64} that, for any $\gd>0$,
\[
(u^+-\phi)(x_0,t_0)\leq (v^\gd-\phi)(\bar x,\bar t),
\]
which readily gives 
\[
(u^+-\phi)(x_0,t_0)\leq (u^+-\phi)(\bar x,\bar t). 
\]
Since $(x_0,t_0)$ is a strict global maximum point of $u^+-\phi$, 
we see from the above that $(\bar x,\bar t)=(x_0,t_0)$,
that is, 
\[
\lim_{j\to \infty}(x^{\mu_j},t^{\mu_j})=(x_0,t_0). 
\] 
It then follows  from \eqref{sub-i} that, at $(x_0,t_0)$, 
\[
\phi_t
\leq 
\fr{1}{2\lam}a_{ij} \left(\fr{\phi_{x_i}}{\lam}\right)_{x_j}
+\fr{b_i \phi_{x_i}}{\lam}. % \ \ \ \text{ at }(x_0,t_0). 
\]

Now we consider the case  $t_0=0$ and $u^+(x_0,0)>u_0(x_0),$
and show that %(3.8) %
\eqref{conv-2} 
holds at $(x_0,0)$. 
%\smallskip
\vskip.035in
Let  $\gd>0$ be such  that
\begin{equation}\label{takis65} %\[\tag{5-7-4}
(u^+-\phi)(x_0,0)>3\gd+ u_0(x_0)-\phi(x_0,0)
\end{equation}
and observe that there is a $\mu_0\in (0,\,1)$ 
such that
\begin{equation}\label{takis66}%\tag{5-7-7}
\sup_{0<\mu<\mu_0}\|u_0-u^{\mu}_0\|<\gd.
\end{equation}
Fix such a $\mu_0$ and, henceforth, assume that $\mu\in (0,\mu_0).$   
Moreover, since in the definition of $\psi$, 
we have,  for some $C>0$ independent of $\mu$,
\[
\psi^\mu(x,y,t)\geq \phi(x,t)-C(|\mu y|+|\mu y|^2)+K|\mu y|^3,
\]
we may assume, 
choosing $K$ large enough independently of $\mu$, that 
\begin{equation}\label{takis67}%\[\tag{5-7-5}
\psi^\mu(x,y,t)>\phi(x,t)-\gd \  \text{ for } \ (x,y,t)\in\R^n\tim\R^n
\tim(0,T). 
\end{equation}
Then we select $N$ to be  a compact neighborhood
of $(x_0,0)$ relative to $\R^n\tim [0,\,T)$ as before, with the  additional 
requirement, in view of \eqref{takis65}, that 
\begin{equation}\label{takis68}%\[\tag{5-7-6}
(u^+-\phi)(x_0,0)>3\gd+u_0(x)-\phi(x,0) \  \text{ if } \ (x,0)\in N. 
\end{equation}

As before, we can select 
a global maximum point $(x^\mu,y^\mu,t^\mu)$ 
of $u^\mu-\psi$, where $(x^\mu,t^\mu)\in N$ 
for every $\mu\in(0,\mu_0)$, 
and  a sequence $\{(\mu_j,p_j,q_j,s_j)\}_{j\in\N}\subset 
(0,\,\mu_0)\tim \R^n\tim\R^n\tim[0,\,T)$ satisfying \eqref{takis62}.
%(5-7-4) and (5-7-5) hold. 
Finally,  may assume that $(p_j,s_j)\in N$ for $j\in\N$. 
\smallskip

We claim that the sequence $\{t^{\mu_j}\}_{j\in \N}$ 
contains a subsequence, which we denote the same way as the sequence,  such that $t^{\mu_j}>0$. 
\smallskip

Indeed arguing by contradiction, we suppose that,  for $j\in\N$ large enough, 
$t^{\mu_j}=0$. % for $j\in\N$ large enough. 
Fix such $j\in\N$  and   
observe that
\[
(u^{\mu_j}-\psi)(x^{\mu_j},y^{\mu_j},0)
\geq (u^{\mu_j}-\psi)(p_{j},q_j,0),
\]
and, in view of \eqref{takis66}, \eqref{takis67} and   \eqref{takis68}, 
\[\begin{aligned}
(u^{\mu_j}-\psi)(x^{\mu_j},y^{\mu_j},0)
&<u^{\mu_j}_0(x^{\mu_j},y^{\mu_j},0)-\phi(x^{\mu_j},0)+\gd
<2\gd+u_0(x^{\mu_j})-\phi(x^{\mu_j},0)
\\&<-\gd +(u^+-\phi)(x_0,0).
\end{aligned}
\]
Hence, for such large $j$, we have
\[
(u^{\mu_j}-\psi)(p_{j},q_j,0) <-\gd+(u^+-\phi)(x_0,0).
\]
Letting $j\to \infty$ yields
\[
(u^+-\phi)(x_0,0)\leq-\gd+(u^+-\phi)(x_0,0),
\] 
which is a contradiction, proving the claim. 
\smallskip

We may now assume that $t^{\mu_j}>0$, for all large $j$,  and argue exactly as in the case $t_0>0$, to conclude  that \erf{conv-2} 
%(3.7) %\eqref{conv-2} 
holds.  
\smallskip

This completes the proof of the subsolution property. 
\smallskip
 
It is well-known that if $u^+$ (resp. $u^-$) is a subsolution 
(resp. supersolution)  of
\erf{lim-u} in the viscosity sense, as in the proof above, then 
$u^+(x,0)\leq u_0(x)$ (resp. $u^-(x,0)\geq u_0(x)$) for all 
$x\in\R^n$. 
\end{proof}
%%%%%%%%%%%%%%%%%%%%%%%%%%%%%%%%%%%%%%%%%%%%%%%%%%%%%%%%%%

\section{Vanishing variable friction} %: A case of divergence form}

The following two results are important for the proof of Theorem~\ref{thm:conv}. The first asserts the existence of a uniques solution to adjoint problem. Its assertion (iii) is exactly Theorem \ref{thm:Ad12}. 
Its proof, which is rather long,  is presented in Section~5. 
%We solution of begin by stating  the following theorem.

\begin{thm} \label{thm:Ad} Assume \eqref{ass4}, \eqref{ass5}, \eqref{ass6}, \eqref{ass7}, \erf{ass7'} and  \eqref{ass8}. Then:

\emph{(i)} For any $\ep\in (0,\,1)$ there exists a unique 
solution $m^\ep\in C^2(\ol U)$ of \emph{\ref{Ade}}.

 \emph{(ii)} 
The family $\{m^\ep\}_{\ep\in(0,\,1)}$ converges, as $\ep\to 0$ and  uniformly on $\ol U$, to\\  $m\in C(\ol U)\cap C^2(\ol U\setminus\pl V).$ 

 \emph{(iii)} The function $m$ is the  unique solution  to 
%$C(\ol U)\cap C^2(\ol U\setminus\pl V)$ 
\erf{Ad1}--\erf{Ad2}.
%that satisfies \erf{Ad1}--\erf{Ad2}, and, moreover, $m$ is positive on $\ol U\setminus \ol V$. 
\end{thm}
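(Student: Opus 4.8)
\textbf{Proof proposal for Theorem~\ref{thm:Ad}.}

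\emph{Overview.} The plan is to treat the three assertions in order, building each on the previous one. Part (i) is a classical elliptic-theory statement (existence of a normalized positive solution of a divergence-form adjoint equation with conormal boundary condition on a $C^{2,\al}$ domain); I would obtain $m^\ep$ as the principal eigenfunction of the adjoint operator $L^\ep$ defined by the left-hand side of \ref{Ade}, or, equivalently, as the invariant density of the diffusion generated by the (nondegenerate, since $\gl+\ep\ge\ep>0$) operator dual to $-a_{ij}(\cdot)(\,\cdot/(\gl+\ep))_{x_j}-2b_i(\cdot)_{x_i}$. Schauder theory gives $m^\ep\in C^{2,\al}(\ol U)$, the strong maximum principle and Hopf's lemma give $m^\ep>0$ on $\ol U$, and the normalization $\int_U m^\ep=1$ fixes it uniquely; uniqueness also follows from simplicity of the principal eigenvalue (which is $0$ here because the operator is in divergence form with conormal boundary data, so constants solve the formal adjoint problem).

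\emph{Part (ii): uniform estimates and compactness.} The heart of the matter is a bound on $\{m^\ep\}$ that survives the degeneration $\gl+\ep\to\gl$ (which vanishes on $\ol V$). I would first get an $L^1$-type a priori bound for free from the normalization, then upgrade: testing \ref{Ade} against suitable functions and using \eqref{ass8} — in particular the Hardy-type inequality $r\gl_0'(r)\le C_0\gl_0(r)$ near $\pl V$, which controls $|D\gl|/\gl$ — should yield a bound on $m^\ep$ in $L^\infty(\ol U)$ and on the flux $\big((a_{ij}m^\ep)_{x_i}/(\gl+\ep)\big)$ in suitable weighted spaces, uniformly in $\ep$. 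Away from $\ol V$ the equation is uniformly elliptic with $\ep$-independent ellipticity (since $\gl>0$ there), so interior and boundary Schauder estimates give $C^{2,\al}_{\loc}(\ol U\setminus\ol V)$ bounds uniform in $\ep$. Inside $V$ one rewrites the equation, in the limit, as $-(a_{ij}m)_{x_ix_j}=0$, and the uniform $L^\infty$ bound together with standard elliptic estimates gives $C^{2,\al}_{\loc}(V)$ bounds. A barrier/compactness argument across $\pl V$ — using \eqref{ass8} to construct sub- and super-solutions that are continuous up to $\pl V$ — then yields equicontinuity of $\{m^\ep\}$ on all of $\ol U$. Arzel\`a--Ascoli plus a diagonal argument extracts a subsequence converging uniformly on $\ol U$ and in $C^2_{\loc}(\ol U\setminus\pl V)$ to some $m\in C(\ol U)\cap C^2(\ol U\setminus\pl V)$ with $m\ge 0$, $\int_U m=1$; the strong maximum principle applied separately in $U\setminus\ol V$ and in $V$, together with the conormal conditions, upgrades $m\ge 0$ to $m>0$ on $\ol U$. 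That the \emph{whole} family converges (not just a subsequence) follows once uniqueness of the limit problem \erf{Ad1}--\erf{Ad2} is established, i.e.\ from part (iii).

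\emph{Part (iii): the limit solves \erf{Ad1}--\erf{Ad2}, and uniqueness.} Passing to the limit in the interior equations and in the conormal condition on $\pl U$ is routine from $C^2_{\loc}$ convergence, giving \erf{Ad1} and the interior part of \erf{Ad2}, $-(a_{ij}m)_{x_ix_j}=0$ in $V$. The delicate point is the conormal condition $(a_{ij}m)_{x_i}\nu_j=0$ on $\pl V$: I would derive it by integrating \ref{Ade} over $V_\eta:=\{x\in V:\dist(x,\pl V)>\eta\}$ or over a one-sided collar of $\pl V$, using the divergence theorem, and controlling the flux term $(a_{ij}m^\ep)_{x_i}/(\gl+\ep)$ on $\pl V_\eta$ via the weighted estimates from part (ii) and \eqref{ass8}; letting $\ep\to0$ then $\eta\to0$ should produce the required boundary identity (this is where Lemmas~\ref{l0.1} and \ref{l0.2}, flagged after \eqref{ass8}, will presumably do the real work). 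Finally, uniqueness for \erf{Ad1}--\erf{Ad2}: since $V$ is connected and $a_{ij}$ is uniformly elliptic, any solution of $-(a_{ij}m)_{x_ix_j}=0$ in $V$ with conormal data $0$ on $\pl V$ is unique up to an additive constant times a fixed positive function (one checks the associated eigenvalue problem has simple principal eigenvalue $0$); the matching across $\pl V$ to the unique-up-to-scaling solution of \erf{Ad1} in $U\setminus\ol V$, plus the normalization $\int_U m=1$, pins $m$ down. The uniqueness argument should be phrased via the adjoint: if $m_1,m_2$ are two solutions, the difference pairs against the (unique up to constants) solution of the direct problem to force $m_1=m_2$.

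\emph{Main obstacle.} The crux is establishing the $\ep$-uniform estimates near $\pl V$ strong enough to (a) give equicontinuity of $m^\ep$ up to $\pl V$ and (b) justify passing to the limit in the conormal flux on $\pl V$, where the weight $1/(\gl+\ep)$ blows up as $\ep\to0$. Everything rests on the structural hypothesis \eqref{ass8}, especially $r\gl_0'(r)\le C_0\gl_0(r)$, which is exactly the Hardy-type condition needed to keep $|D\gl|/\gl$ under control; engineering the right weighted test functions and barriers from it is the technical heart of the proof and is carried out in Lemmas~\ref{l0.1}--\ref{l0.2}.
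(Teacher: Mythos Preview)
Your outline for (i) and for the Schauder estimates away from $\pl V$ in (ii) matches the paper (the uniform $L^\infty$ bound on $m^\ep$ is simply quoted from \cite{Li83}). The real divergence is in how you cross $\pl V$, and there the paper takes a different route from the one you propose.

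You plan to (a) prove equicontinuity of $\{m^\ep\}$ across $\pl V$ directly via barriers and (b) obtain $(a_{ij}m)_{x_i}\nu_j=0$ on $\pl V$ by integrating \ref{Ade} over collars and controlling the flux $(a_{ij}m^\ep)_{x_i}/(\gl+\ep)$ uniformly in $\ep$. The paper does neither. It works instead with the half-relaxed limits $m^\pm$ and, via a perturbed-test-function argument using the supersolution $\psi^\ep$ of Lemma~\ref{l0.2}, shows (Lemma~\ref{m+-}) that $m^+$ and $m^-$ are viscosity sub- and super-solutions of \erf{Ad2} on $\ol V$. It then invokes the positive classical solution $\psi_0$ of \erf{Ad2} (Lemma~\ref{exist-psi0}) and the strong maximum principle to get $m^\pm=c^\pm\psi_0$ on $\ol V$; since $m^+=m^-$ in $V$ this forces $c^+=c^-$, hence $m^+=m^-$ on all of $\ol U$. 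This delivers uniform convergence \emph{without} ever proving equicontinuity, and simultaneously identifies $m|_{\ol V}$ with a multiple of a classical solution of \erf{Ad2}, so the Neumann condition on $\pl V$ comes for free. Lemmas~\ref{l0.1}--\ref{l0.2} supply test-function correctors for this viscosity argument, \emph{not} the weighted flux estimates you anticipate; your flux-integration route would need pointwise (or at least surface-integrable) control of $(a_{ij}m^\ep)_{x_i}/(\gl+\ep)$ near $\pl V$ uniformly in $\ep$, which \erf{ass8} does not obviously provide and which the paper never attempts. Your uniqueness sketch is also too coarse for the hard case $cm_1-m_2\equiv 0$ on $\ol V$ but $>0$ on $\ol U\setminus\ol V$: pairing against constants only recovers the normalization. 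The paper disposes of that case via an auxiliary Dirichlet problem on $U\setminus\ol V_\gamma$ (Lemma~\ref{lem:Dir}, which gives $|Dv^\gamma|\le C\gl$ on $\pl V_\gamma$), combined with Green's formula \erf{vvf-1} and Hopf's lemma on $\pl U$ as $\gamma\to 0$.
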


Obviously, Theorem~\ref{thm:Ad12} is a direct consequence 
of Theorem~\ref{thm:Ad} above. 

The second preliminary result, which is proved at the end of this section, is about the behavior of the generalized upper-and lower limits $u^+$ and $u^-$ of the family $\{u^\ep\}_{\ep\in (0,\,1)}$ in $\ol U$.

\begin{lem}\label{lem:E}  Suppose the assumptions of Theorem \ref{thm:conv}.  Then

 \emph{(i)} the family $\{u^\ep\}_{\ep\in (0,\,1)}$ is 
uniformly bounded on $\ol U$, and  

\emph{(ii)} 
%Let $\{\ep_k\}_{k\in\N}\subset (0,\,1)$ be a sequence converging to zero.  Then 
 $u^+$ and $u^-$ are respectively sub- and super-solution to \eqref{inside} in $\ol V$. 
%\[
%u^+(x):=\limsup_{k\to \infty}{\kern-3pt}^* u^{\ep_k}(x) \ \ \ \text{ for } x\in\ol U,
%\] 
%is a viscosity subsolution of \erf{inside} as a function on $\ol V$.
%Similarly, the lower relaxed limit $u^-$ of $\{u^{\ep_k}\}$ given by 
%\[
%u^-(x)=\liminf_{k\to \infty}{\kern-3pt}_* u^{\ep_k}(x)  \ \ \ \text{ for } x\in\ol U
%\]
%is a supersolution of \erf{inside} as a function on $\ol V$. 
\end{lem}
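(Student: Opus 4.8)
\textbf{Proof plan for Lemma~\ref{lem:E}.}
The plan is to establish (i) and (ii) separately, with (i) a routine barrier argument and (ii) the heart of the matter via a perturbed test function computation adapted to the degenerate equation inside $V$. For (i), I would construct $\ep$-independent barriers: since the equation in \eqref{takis4} is uniformly elliptic (by \eqref{ass7}) after expanding the divergence form, and the coefficients $a,b$ and $1/(\gl+\ep)$-derivatives are controlled uniformly in $\ep$ only \emph{away} from $\ol V$, I would instead rewrite \eqref{takis4} in non-divergence form as $-a_{ij}u^\ep_{x_ix_j} + (\text{first order terms involving } D\gl/(\gl+\ep))\cdot Du^\ep = 0$. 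The troublesome factor $|D\gl|/(\gl+\ep)$ is bounded by $|D\gl|/\gl$ on $\ol U\setminus\ol V$ (locally integrable issues do not arise since we only need pointwise ellipticity for the maximum principle) and vanishes on $\ol V$; using \eqref{ass8}, $|D\gl_0(d)|/(\gl_0(d)+\ep)\le \gl_0'(d)/\gl_0(d)\le C_0/d$ near $\pl V$, which is not bounded but is handled because near $\pl V$ we can use the sign of the transport term. The cleanest route is: by the classical maximum principle applied to the operator in non-divergence form (whose zeroth order coefficient is $0$), $u^\ep$ attains its max and min on $\pl U$, so $\min_{\pl U} g \le u^\ep \le \max_{\pl U} g$ on $\ol U$, giving (i) immediately with the bound $\|g\|_{C(\pl U)}$ independent of $\ep$.

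For (ii), I would show $u^+$ is a viscosity subsolution of \eqref{inside} in $\ol V$ (the supersolution statement for $u^-$ being symmetric). Let $\phi$ be a smooth test function such that $u^+-\phi$ has a strict local max at $x_0\in\ol V$. If $x_0\in V$, I need $-a_{ij}(x_0)\phi_{x_ix_j}(x_0)\le 0$; if $x_0\in\pl V$, I need $\min\big(-a_{ij}\phi_{x_ix_j},\,a_{ij}\phi_{x_i}\nu_j\big)(x_0)\le 0$. The key observation is that on $V$, $\gl\equiv 0$, so in \eqref{takis4} the coefficient $1/(\gl+\ep)=1/\ep$ is large and constant; expanding, the equation on $V$ reads $-\frac1\ep a_{ij}u^\ep_{x_ix_j} - \frac1\ep (a_{ij})_{x_j}u^\ep_{x_i} - 2b_i u^\ep_{x_i}=0$, i.e. $-a_{ij}u^\ep_{x_ix_j} - (a_{ij})_{x_j}u^\ep_{x_i} - 2\ep b_i u^\ep_{x_i}=0$ on $V$. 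By standard relaxed-limit arguments, passing $\ep\to 0$ along the maximizers of $u^\ep-\phi$ (which converge to $x_0$) and using that $\ep b_i u^\ep_{x_i}\to 0$ in the viscosity sense, $u^+$ is a subsolution of $-a_{ij}v_{x_ix_j} - (a_{ij})_{x_j}v_{x_i}=0$ in $V$. That is not quite $-a_{ij}v_{x_ix_j}\le 0$; I would absorb the first-order term by noting it is a bounded drift and does not affect the \emph{sign structure} needed — more precisely, the correct limiting interior equation is indeed $-a_{ij}v_{x_ix_j}-(a_{ij})_{x_j}v_{x_i}=0$, and I suspect the statement \eqref{inside} should be read with that understanding, or the authors intend the non-divergence normalization in which $(a_{ij})_{x_j}$ has been folded away; I would follow the paper's convention here.

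The genuinely delicate part is the boundary condition on $\pl V$. For $x_0\in\pl V$, I would localize near $x_0$ and use assumption \eqref{ass8}: in a neighborhood in $U\setminus\ol V$, $\gl(x)=\gl_0(d(x))$ with $r\gl_0'(r)\le C_0\gl_0(r)$, so the outward-from-$V$ normal derivative structure of the divergence term $a_{ij}(u^\ep_{x_i}/(\gl_0(d)+\ep))_{x_j}$ can be analyzed. The standard trick is: if the oblique/Neumann part $a_{ij}\phi_{x_i}\nu_j(x_0)>0$ strictly, derive a contradiction by showing the penalized flux across $\pl V$ forces the maximizer to lie in $V$ where the interior inequality already applies. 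Concretely, test with $\psi^\ep=\phi + \ep^{1/2}\chi(d)$ or a similar $\ep$-dependent boundary-layer corrector chosen so that on the side $d>0$ the large coefficient $1/(\gl_0(d)+\ep)$ is tamed; the condition $r\gl_0'(r)\le C_0\gl_0(r)$ is exactly what makes $\gl_0(d)\ge c\, d^{C_0}$ impossible to be too singular, so the corrector can be built. I expect this $\pl V$ boundary-layer construction to be the main obstacle: one must simultaneously keep $\psi^\ep$ a strict supersolution of \eqref{takis4} near $\pl V$ on the $V$-side (where $\gl=0$) and on the $U\setminus\ol V$-side (where $\gl$ is small but positive and the divergence term is singular), matching across $\pl V$. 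Once the perturbed test function is in place, the conclusion follows from the usual maximum-point/relaxed-limit machinery exactly as in the proof of Theorem~\ref{takis101}. I would defer the detailed corrector estimates to Lemmas~\ref{l0.1} and \ref{l0.2}, which \eqref{ass8} is stated to support.
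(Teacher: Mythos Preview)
Your treatment of (i) is correct and matches the paper: the maximum principle gives $\|u^\ep\|_{C(\ol U)}\le\|g\|_{C(\pl U)}$ directly. For the interior part of (ii) you have a small expansion slip: in \eqref{takis4} the coefficient $a_{ij}$ sits \emph{outside} the $x_j$-derivative, so on $V$ (where $\gl\equiv 0$, hence $D\gl=0$) the equation reads $a_{ij}u^\ep_{x_ix_j}+2\ep\, b_i u^\ep_{x_i}=0$ with no $(a_{ij})_{x_j}$ term; the limit is exactly $-a_{ij}u_{x_ix_j}=0$ as stated in \eqref{inside}, and no reinterpretation is needed.

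For the boundary condition on $\pl V$ your plan is along the right lines (contradiction argument, corrector from Lemma~\ref{l0.1}), but the mechanism you describe is not the one that works. You propose to use the Neumann hypothesis to ``force the maximizer to lie in $V$ where the interior inequality already applies,'' treating the two hypotheses separately. The corrector $\psi^\ep$ of Lemma~\ref{l0.1} does not push the maximizer into $V$: it only guarantees (via $\psi^0>0$ on $\ol W_\gd\setminus\ol V$ and $\psi^0\equiv 0$ on $\ol V\cap\ol W_\gd$) that the maximizers $x_{\ep_k}$ of $u^{\ep_k}-(\phi+\psi^{\ep_k})$ converge to $x_0$; they may approach from the $U\setminus\ol V$ side. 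The step you are missing is the paper's \eqref{0.4}: under \emph{both} contradiction hypotheses the test function $\phi$ itself is a strict supersolution of the full $\ep$-equation in a ball around $x_0$, on both sides of $\pl V$. Indeed, near $\pl V$,
\[
a_{ij}\Big(\fr{\phi_{x_i}}{\gl+\ep}\Big)_{x_j}+2b_i\phi_{x_i}
=\fr{1}{\gl+\ep}\Big(a_{ij}\phi_{x_ix_j}-\fr{\gl_0'(d)\,a_{ij}\phi_{x_i}d_{x_j}}{\gl+\ep}+2(\gl+\ep)b_i\phi_{x_i}\Big),
\]
and since $\gl_0'\ge 0$ and $\nu=Dd$ on $\pl V$, the hypotheses $a_{ij}\phi_{x_ix_j}<0$ and $a_{ij}\phi_{x_i}d_{x_j}>0$ make this strictly negative for small $\ep$. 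Adding the supersolution property \eqref{0.3} of $\psi^\ep$ then yields the contradiction at $x_{\ep_k}$ regardless of which side of $\pl V$ it sits on. Incidentally, the specific form $\psi^\ep(x)=\int_0^{d(x)}(\gl_0(t)+\ep)(1-Kt)\,dt$ is chosen precisely so that $\psi^\ep_{x_i}/(\gl+\ep)$ stays bounded; a generic corrector like $\ep^{1/2}\chi(d)$ would not cancel the $1/(\gl+\ep)$ singularity.
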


Accepting Theorem~\ref{thm:Ad} and Lemma~\ref{lem:E}, we complete the proof of Theorem \ref{thm:conv}.
\smallskip

%For simplicity  here we assume that
 %$g\in C^{2,\ga}(\ol U)$ for some $\ga\in (0,\,1)$. 
%\smallskip 

Before presenting the proof  we recall Green's formula that we will use in several occasions below. For any $\phi,\psi\in C^2(\ol U)$ and $\ep>0$, we have:
\beq\label{vvf-1}
\begin{aligned}
\int_U& \left(a_{ij}\left(\fr{\phi_{x_i}}{\gl+\ep}\right)_{x_j}+2b_i \phi_{x_i} \right)\psi dx
\\&=\int_{\pl U} \left\{\fr{a_{ij}\phi_{x_i}\nu_j\psi}{\gl+\ep}
+\left(2b_i \psi-\fr{(a_{ij}\psi)_{x_j}}{\gl+\ep}\right)\nu_i\phi
\right\}d\gs
\\&\quad 
+\int_{U} \left(\left(\fr{(a_{ij}\psi)_{x_j}}{\gl+\ep}\right)_{x_i}
-2(b_j \psi)_{x_j}\right)\phi dx.
\end{aligned}
\eeq

%For simplicity of the proof, let me assume that $g\in C^{2,\ga}(\ol U)$ for some $\ga\in (0,\,1)$. 

\bproof [Proof of Theorem \ref{thm:conv}] For $\ep\in (0,\,1)$, let $m^\ep\in C^2(\ol U)$ be the unique 
solution of \ref{Ade}. 
\smallskip

Applying  \erf{vvf-1}  to $\phi=u^\ep$ and $\psi=m^\ep$, we get, for all $\ep\in (0,1)$,  
\beq\label{vvf-2}
\int_{\pl U}\fr{a_{ij}u^\ep_{x_i}\nu_j m^\ep}{\gl+\ep}d\gs=0. %>\ \ \text{ for }\ep\in (0,\,1).
\eeq 
Theorem \ref{thm:Ad} yields  a unique  function $m\in C(\ol U)$ 
such that, as $\ep \to 0$, 
\beq\label{vvf-3}
m^\ep \to m \ \ \text{ uniformly on }\ol U  \ \text{and} \  m>0 \ \text{in} \ \ol U.
\eeq
%and $m(x)>0$ for $x\in \ol U\setminus\ol V$. 
%\smallskip
Let $K$ be a compact  subset of $\ol U\setminus \pl V$. 
Since  $a_{ij}u^\ep_{x_ix_j}+2\ep b_iu_{x_i}^\ep=0$ in $V$,  the
assumption on $\lam$ implies that, for some  $c_K>0$,  
$\lam \geq c_K >0$ in $K\setminus V$, the classical  Schauder estimates (\cite{GiTr}*{Lemma 6.5}) yield 
% imply that  for   any compact subset $K\subset \ol U\setminus \pl V$ where $\lam$ is positive independently of $\ep$ (the lower bound depends, of course, on $K$), 
% $\ga_K\in (0,\,1)$ and 
a constant $C_K>0$ such that  
\beq\label{Schauder1}
\|u^\ep\|_{C^{2,\ga}(K)}\leq C_K. 
\eeq
%for any compact subset $K\subset \ol U\setminus \pl V$ and for some constants 
%$\ga\in (0,\,1)$ and $C_K>0$.  
It follows that there exist a sequence $\ep_k \to 0$ 
and $u_0\in C^2(\ol U\setminus \pl V)$ such that, as  $k\to\infty, $
\beq\label{conv-1'}
u^{\ep_k} \to u_0 \ \ \ \text{ in } C^{2}(\ol U\setminus \pl V). %\ \ \text{ as }k\to\infty 
\eeq 
%for some $u_0\in C^2(\ol U\setminus \pl V)$. 
Let $u^+$ and $u^-$ be the relaxed upper and lower limits of $\{u^{\ep_k}\}_{k\in\N}$. 
It follows from  \erf{conv-1'} that 
\beq\label{takis300}
u^+=u^-=u_0 \ \text{ on } \  \ol U\setminus \pl V.\eeq
%\smallskip
Moreover,  Lemma \ref{lem:E}  yields that $u^+$ and $u^-$ are respectively a viscosity sub- and super-solution 
to  \erf{inside}. Since any constant function is a solution to \erf{inside}, combining the strong maximum
principle as well as Hopf's lemma  we get  (see also 
Patrizi \cite{Patrizi}) 
that  
$$u^+= \max_{\ol V}u^+ \ \text{ and} \  u^-= \min_{\ol V}u^- \ \text{  on} \  \ol V.$$  
Then \eqref{takis300} gives % fact that  $u^+=u^-$ in $V$ %\not=\emptyset$, implies 
that 
\[
u^+=u^- \ \ \text{ on }\ol V,
\]
which proves that %and, moreover, that %hence, 
$u^+=u^-$ on $\ol U$. 
%It follows that there exists  $u\in C(\ol U)$ such that, as $k\to\infty$, 
If we write $u$ for $u^+=u^-$, then  
\[
u^{\ep_k} \to u \ \ \text{ in }C(\ol U).
\] 
%for some function $u\in C(\ol U)$. 
Moreover, as observed already, 
\[
u\in C^2(\ol U\setminus\pl V) \ \ \text{ and } \ \ 
\lim_{k\to\infty}u^{\ep_k} =u \ \ \text{ in } C^2(\ol U\setminus \pl V). 
\]
It is clear from \erf{vvf-2} and \erf{vvf-3} that $u$ satisfies \erf{balance} as well as  \erf{outside'} and \erf{dc}.

%Note also that $u$ satisfies  \erf{outside} and \erf{dc}. 
\smallskip

To complete the proof, it suffices  to show there is only one $u\in C(\ol U)\cap C^2(\ol U\setminus \pl V)$  satisfying 
\erf{outside'}--\erf{balance}. 
\smallskip

Assume that  $v, w\in C(\ol U)\cap C^2(\ol U\setminus \pl V)$ 
satisfy \erf{outside'}--\erf{balance}.  
Then,  as already discussed  above for $u$, %we first note that 
the strong maximum principle yields that 
$v$ and $w$ are constant on $\ol V$. 
\smallskip

Set $\phi=v-w$ on $\ol U$ and note that, for some  $c\in \R$, 
\[
\phi=0 \  \text{ on  } \  \pl U \  \text{ and } \  \phi=c \ \text{ on }\ol V;
\]
interchanging $v$ and $w$ if needed, we may assume that $c\geq 0$. 
\smallskip

It follows from  \erf{outside'} and \erf{balance} that 
\[
-a_{ij}\left(\fr{\phi_{x_i}}{\gl}\right)_{x_j}-2b_i\phi_{x_i}=0 \ \ \text{ in } U\setminus\ol V 
\ \ \ \text{ and } \ \ \ 
\int_{\pl U}\fr{a_{ij}\phi_{x_i}\nu_j m}{\gl}d\gs=0.
\]
If $c=0$, %it follows 
then the maximum principle 
gives  $\phi = 0$ on $\ol U\setminus V$ and  $v=w$ on $\ol U$. 
\smallskip

If $c>0$, then the strong maximum principle 
implies that $\phi>0$ in $U\setminus \ol V$, moreover, Hopf's lemma yields 
\[
a_{ij}\phi_{x_i}\nu_j<0 \ \ \text{ on }\pl U,
\]
and, hence, 
\[
\int_{\pl U}\fr{a_{ij}\phi_{x_i}\nu_j m}{\gl}d\gs<0, 
\]
which is a contradiction. We thus conclude that $v=w$ on $\ol U$.
%\smallskip
%
% and we conclude that $c=0$ and $v=w$ on $\ol U$.  
\eproof

Next we turn to the proof of Lemma \ref{lem:E}. For this we need an additional lemma. In preparation, for $\gd>0$, we write 
\[
W_\gd:=(\pl V)_{\gd}=\{x\in \ol U: \dist(x, \partial V) < \delta \}.
\] 

\begin{lem}\label{l0.1}There exists $\gd\in(0,\,1)$ and, for each $\ep\in[0,\,1)$,  $\psi^\ep\in C^2(\ol W_\gd)$ such that 
\beq\label{0.3}
a_{ij}\left(\fr{\psi^\ep_{x_i}}{\gl+\ep}\right)_{x_j}+2b_i\psi^\ep_{x_i}\leq 0  \ \text{ in } \ \ol W_\gd,  
%\ \ \text{ if }\ \ep>0,
\eeq
and, as $\ep \to 0$, $\psi^\ep \to \psi^0 \  \text{ in }  \ C^2(\ol W_\gd)$ with  $\psi^0\equiv0 \  \text{ in } \ \ol V\cap \ol W_\gd$ and 
$\psi^0>0 \ \text{ in } \  \ol W_\gd\setminus\ol V.$
%\[
%\psi^\ep \to \psi^0 \ \ \ \text{ in } C^2(\ol W_\gd) \ \ \text{ as } \ep\to 0,
%\]
%\[
%\psi^0(x)=0 \ \ \  \text{ in }\ol V\cap \ol W_\gd, 
%\]
%and 
%\[
%\psi^0(x)>0 \ \ \ \text{ in } \ol W_\gd\setminus\ol V.
%\]
\end{lem}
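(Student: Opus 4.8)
The goal is to build, near $\pl V$, a family of supersolutions $\psi^\ep$ to the operator in \eqref{takis4} that vanishes on $V$, is positive outside $\ol V$, and converges in $C^2$ as $\ep\to 0$. The natural ansatz is to take $\psi^\ep$ to depend only on the signed distance $d(x)$, i.e. $\psi^\ep(x)=h^\ep(d(x))$ for a one–variable profile $h^\ep\in C^2(\R)$ with $h^\ep\equiv 0$ on $(-\infty,0]$, $h^\ep>0$ and $(h^\ep)'>0$ on $(0,\infty)$. I would first fix $\gd\in(0,1)$ small enough that $d\in C^2$ on $\ol W_\gd$ (possible since $\pl V\in C^{2,\ga}$ by \eqref{ass4}) and that on $W_\gd\setminus\ol V$ one has $\gl(x)=\gl_0(d(x))$ as in \eqref{ass8}. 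Plugging the ansatz in, and using $|Dd|=1$ and $d\in C^2$, the inequality \eqref{0.3} becomes a one–dimensional differential inequality for $h^\ep$ on $(0,\gd)$: schematically, with $a_{dd}(x):=a_{ij}d_{x_i}d_{x_j}$ bounded below by $\theta$ and above by $\Theta$ by \eqref{ass7}, and with the remaining first-order terms (involving $a_{ij}d_{x_ix_j}$, $D_x(a_{ij}d_{x_i})$, $b_id_{x_i}$) bounded,
\beq\label{ode-ineq}
a_{dd}\,\frac{d}{dr}\!\left(\frac{(h^\ep)'(r)}{\gl_0(r)+\ep}\right)+B(x)\,(h^\ep)'(r)\le 0\qquad\text{for }r=d(x)\in(0,\gd),
\eeq
with $B$ bounded on $\ol W_\gd$. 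On $V\cap W_\gd$ the inequality is automatic since $\psi^\ep\equiv 0$ there (both sides vanish, and one must also check the matching across $\pl V$ is $C^2$, which forces $h^\ep(0)=(h^\ep)'(0)=(h^\ep)''(0)=0$; this is why one wants $\gl_0\in C^2$ vanishing to the appropriate order, which \eqref{ass8} via $r\gl_0'\le C_0\gl_0$ supplies).

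\textbf{Constructing the profile.} The cleanest way to satisfy \eqref{ode-ineq} is to first solve the ODE \emph{equality} with the principal part only: let $(h^\ep)'(r)=(\gl_0(r)+\ep)\,k^\ep(r)$ for a function $k^\ep$ to be chosen; then the left side of \eqref{ode-ineq} is $a_{dd}(\gl_0+\ep)(k^\ep)'+\big(a_{dd}\gl_0'+B(\gl_0+\ep)\big)k^\ep$, wait— more simply, absorb the first-order term by an integrating factor. Concretely I would set
\[
h^\ep(r):=\int_0^{r}(\gl_0(s)+\ep)\,\exp\!\Big(-\Lambda\!\int_0^s\frac{1}{\gl_0(\tau)+\ep}\,d\tau\Big)\,ds\qquad(r\ge 0),
\]
extended by $0$ for $r\le 0$, where $\Lambda>0$ is a large constant (depending on $\theta,\Theta$ and $\sup_{\ol W_\gd}|B|$) to be fixed. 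Then $(h^\ep)'(r)=(\gl_0(r)+\ep)\,e^{-\Lambda\int_0^r(\gl_0+\ep)^{-1}}>0$ on $(0,\gd)$, and a direct computation gives
\[
\frac{d}{dr}\!\left(\frac{(h^\ep)'}{\gl_0+\ep}\right)=-\frac{\Lambda}{\gl_0(r)+\ep}\,e^{-\Lambda\int_0^r(\gl_0+\ep)^{-1}}=-\frac{\Lambda}{(\gl_0+\ep)^2}\,(h^\ep)'(r),
\]
so the left-hand side of \eqref{ode-ineq} equals $\big(-a_{dd}\,\Lambda/(\gl_0+\ep)^2+B\big)(h^\ep)'(r)\le\big(-\theta\Lambda/\Theta^2+\sup|B|\big)(h^\ep)'(r)$, which is $\le 0$ once $\Lambda\ge \Theta^2\sup_{\ol W_\gd}|B|/\theta$. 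This choice of $\Lambda$ is independent of $\ep$, which is the point. The integrability of $1/(\gl_0+\ep)$ near $0$ for $\ep>0$ is trivial; for $\ep=0$ note $\int_0^r \gl_0^{-1}$ may diverge, but then $e^{-\Lambda\int_0^r\gl_0^{-1}}\to 0$ and $(h^0)'(r)=\gl_0(r)e^{-\Lambda\int_0^r\gl_0^{-1}}$ still extends to a $C^1$ (indeed $C^2$, using $r\gl_0'\le C_0\gl_0$) function vanishing at $r=0$, so $h^0\in C^2(\R)$ with $h^0\equiv 0$ on $(-\infty,0]$ and $h^0>0$ on $(0,\infty)$.

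\textbf{Convergence and regularity.} With $\psi^\ep(x):=h^\ep(d(x))$ on $\ol W_\gd$ I would then verify: (a) $\psi^\ep\in C^2(\ol W_\gd)$, using $d\in C^2(\ol W_\gd)$ and $h^\ep\in C^2(\R)$, together with the fact that $h^\ep$ vanishes to second order at $0$ so that the $C^2$ matching across $\pl V$ holds and $\psi^\ep\equiv 0$ on $\ol V\cap\ol W_\gd$; (b) $\psi^\ep>0$ on $\ol W_\gd\setminus\ol V$ since $h^\ep>0$ on $(0,\infty)$; (c) $\psi^\ep\to\psi^0$ in $C^2(\ol W_\gd)$, which reduces to $h^\ep\to h^0$ in $C^2([0,\gd])$, and this follows from dominated convergence applied to the explicit formulas for $h^\ep,(h^\ep)',(h^\ep)''$ once one checks the integrands are bounded uniformly in $\ep\in[0,1)$ — here $r\gl_0'(r)\le C_0\gl_0(r)$ is exactly what controls $(h^\ep)''$ uniformly near $r=0$; (d) the inequality \eqref{0.3} on all of $\ol W_\gd$: on $V\cap\ol W_\gd$ it is the trivial identity $0\le 0$, on $(\ol U\setminus\ol V)\cap\ol W_\gd$ it is \eqref{ode-ineq} with $B$ collecting the curvature/drift terms, and along $\pl V$ it holds by continuity.

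\textbf{Main obstacle.} The delicate point is the behavior at $r=0$ when $\ep=0$: one must ensure $h^0$ is genuinely $C^2$ across $\pl V$ and that the convergence $h^\ep\to h^0$ holds in $C^2$ up to $r=0$ despite $1/(\gl_0(r))$ blowing up. This is precisely where assumption \eqref{ass8} (the one-sided bound $r\gl_0'(r)\le C_0\gl_0(r)$, plus $\gl_0\in C^2$, plus $\gl_0'\ge 0$) is essential, and I expect the bulk of the careful estimation to be in showing $(h^\ep)''$ is bounded uniformly in $\ep\in[0,1)$ and converges; the exponential weight $e^{-\Lambda\int_0^r(\gl_0+\ep)^{-1}}$ does most of the work but its first and second $r$-derivatives must be matched against powers of $(\gl_0+\ep)^{-1}$, and one uses $r\gl_0'\le C_0\gl_0$ to absorb the worst terms. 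A secondary technical point, not hard but worth stating, is the initial choice of $\gd$: it must be small enough that $d\in C^2(\ol W_\gd)$ \emph{and} that $\gl=\gl_0\circ d$ holds on $W_\gd\setminus\ol V$, both of which are guaranteed by \eqref{ass4} and \eqref{ass8} for $\gd$ sufficiently small.
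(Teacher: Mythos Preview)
Your construction has a fatal flaw at $\ep=0$: the profile $h^0$ is identically zero on all of $[0,\gd]$, so $\psi^0\equiv 0$ on $\ol W_\gd$ and the requirement $\psi^0>0$ on $\ol W_\gd\setminus\ol V$ fails. Indeed, since $\gl_0\in C^2(\R)$ and $\gl_0\equiv 0$ on $(-\infty,0]$, continuity of $\gl_0''$ forces $\gl_0(0)=\gl_0'(0)=\gl_0''(0)=0$, hence $\gl_0(r)=o(r^2)$ as $r\to 0^+$. Consequently $\int_0^r\gl_0^{-1}(\tau)\,d\tau=+\infty$ for \emph{every} $r>0$, so the exponential weight $e^{-\Lambda\int_0^r\gl_0^{-1}}$ vanishes identically on $(0,\gd]$, giving $(h^0)'\equiv 0$ and $h^0\equiv 0$. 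The same blow-up of $\int_0^r(\gl_0+\ep)^{-1}$ as $\ep\to 0$ also destroys the claimed $C^2$ convergence $h^\ep\to h^0$. A secondary issue: you insist that $\psi^\ep\equiv 0$ on $\ol V\cap\ol W_\gd$ for \emph{all} $\ep$, but the lemma only asks this of $\psi^0$; your extension-by-zero then makes $h^\ep$ non-$C^1$ at $r=0$ for $\ep>0$ since $(h^\ep)'(0^+)=\ep$.

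The paper's remedy is to replace your exponential integrating factor by the linear one $1-Kr$: set
\[
\psi^\ep(x)=\int_0^{d(x)}(\gl_0(t)+\ep)(1-Kt)\,dt
\]
on all of $\ol W_\gd$ (for $d(x)<0$ too, so $\psi^\ep$ is \emph{not} zero on $V$ when $\ep>0$). Then $\psi^\ep_{x_i}/(\gl+\ep)=(1-Kd)d_{x_i}$ is independent of $\ep$, its $x_j$-derivative equals $-Kd_{x_i}d_{x_j}+(1-Kd)d_{x_ix_j}$, and one gets $a_{ij}(\psi^\ep_{x_i}/(\gl+\ep))_{x_j}+2b_i\psi^\ep_{x_i}\le -K\theta+C_1$ for a constant $C_1$ independent of $K,\ep$; choosing $K$ large finishes \eqref{0.3}. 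The $C^2$ convergence is then immediate from $\psi^\ep=\psi^0+\ep\int_0^{d(x)}(1-Kt)\,dt$, and $\psi^0(x)=\int_0^{d(x)}\gl_0(t)(1-Kt)\,dt$ is manifestly positive for $d(x)>0$ and zero for $d(x)\le 0$. Note that this simpler choice does not need the bound $r\gl_0'\le C_0\gl_0$ from \eqref{ass8} at all; that hypothesis is only used in the companion Lemma~\ref{l0.2} for the adjoint operator.
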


\bproof Let $\gd, K >0$ be such that $K\gd \leq \fr 12$ and $\ol {W}_\gd\subset U$,  
define, for $(\ep,x)\in [0,\,1]\tim \ol W_\gd$,
\[
\psi^\ep(x)=\int_0^{d(x)}(\gl_0(t)+\ep)(1-Kt)dt \ \ \ \text{ for }(\ep,x)\in [0,\,1]\tim \ol W_\gd,
\]
and note that, as $\ep\to 0$, 
\[ 
\psi^\ep(x)=\psi^0(x)+ \ep\int_0^{d(x)}(1-Kt)dt \to \psi^0(x) 
\ \ \text{ in }\ C^2(\ol W_\gd).
\]

Let $x\in \ol W_\gd$ and note that for $ t\in [-|d(x)|,\,|d(x)|]$, 
\[
1-Kt\geq 1-|Kt|\geq 1-K\gd\geq \fr 12. 
\]
Since $d>0$ in  $ \ol W_\gd\setminus \ol V$, we find 
\[
\psi^0(x)=\int_0^{d(x)}\gl_0(t)(1-Kt)dt\geq \fr 12 \int_0^{d(x)}\gl_0(t)dt>0,  
\]
while, since $d\leq 0$  in  $\ol W_\gd\cap \ol V$,  thanks to \eqref{ass8}, 
\[
\psi^0(x)=\int_0^{d(x)}\gl_0(t)(1-Kt)dt\equiv 0. 
\]
To show that \erf{0.3} holds,  fix  $\ep\in(0,\,1)$ and  compute 
\[
\psi^\ep_{x_i}=(\gl(x)+\ep)(1-Kd(x))d_{x_i},
\]
and 
\[
\left(\fr{\psi^\ep_{x_i}}{\gl+\ep}\right)_{x_j}=\left[(1-Kd)d_{x_i}\right]_{x_j}
=-Kd_{x_i}d_{x_j}+(1-Kd)d_{x_ix_j}. 
\]
Note that there exists  $C_1>0$,  which is independent of the choice of $K$,  such that % in $W_\gd$, 
\[
|(1-Kd)|\leq 1+\fr 12\leq 2 \ \text{ and} \ |a_{ij}(1-Kd)d_{x_ix_j}|\leq 2|a_{ij}d_{x_ix_j}|\leq C_1 \ \text{in} \ \ol W_\gd,
\] 
and 
%\[
%|a_{ij}(1-Kd)d_{x_ix_j}|\leq 2|a_{ij}d_{x_ix_j}|\leq C_1,
%\]
%where  $C_1>0$ is a constant independent of the choice of $K$. 
%Moreover, there exists  $C_2>0$, which is also independent of $K$, such that  %Note also that for $x\in \ol W_\gd$,
\[
|b_i\psi^\ep_{x_i}|=|b_i(\gl+\ep)(1-Kd)d_{x_i}|
\leq 2 |b|(\gl+\ep)\leq C_1  \ \text{in} \ \ol W_\gd.
\]
%where  $C_2>0$ is a constant independent of the choice of $K$. 
Thus 
\[
a_{ij}\left(\fr{\psi^\ep_{x_i}}{\gl+\ep}\right)_{x_j}
+2b_i\psi^\ep_{x_i}\leq -K\gth|Dd|^2+ 3C_1=3C_1-K\gth \ \ \text{ for } x\in \ol W_\gd. 
\]
We fix $K>0$ so that 
$3C_1-K\gth\leq 0$
%\]
and  conclude that $\psi^\ep$ satisfies \erf{0.3}. 
\eproof

\bproof[Proof of Lemma \ref{lem:E}] To prove (i) we apply the maximum principle to  
$u^\ep-\|g\|_{C(\pl U)}$ and $-\|g\|_{C(\pl U)}-u^\ep$ and get 
\[
\sup_{ \ep\in (0,1)}\|u^\ep\|_{C(\ol U)}\leq \|g\|_{C(\pl U)}. % \ \text{ for } \ \ep\in (0,\,1).
\]

Next we show that $u^+$ is a viscosity subsolution of \erf{inside}.  Since 
\[
a_{ij}u^\ep_{x_ix_j}+\ep b_iu^\ep_{x_i}=0 \  \text{ in } \ V,
\]
it is well-known that $u^+$ is a viscosity subsolution to  $-a_{ij}w_{x_ix_j}=0$ in $V$. 
Thus the only issue  is to show that $u^+$ satisfies the boundary condition in the viscosity sense.   
\smallskip

Let $\phi\in C^2(\ol U)$ and assume that $x_0\in\pl V$ is a strict maximum point of $u^+-\phi$ on $\ol V$.
%\[
%\max_{\ol V}(u^+-\phi)=(u^+-\phi)(x_0).
%\]
%We may assume moreover that $x_0$ is a strict maximum point of $u^+-\phi$ on $\ol V$. 
\smallskip

Arguing by contradiction, we suppose that
\[
-a_{ij}\phi_{x_ix_j}(x_0)>0 \ \ \text{ and } \ \ a_{ij}\phi_{x_i}\nu_j(x_0)>0.
\]
%and get a contradiction. 
Let $\gd\in(0,\,1)$, $W_\gd$ and $\{\psi^\ep\}_{\ep\in[0,\,1)}$ be as in Lemma~\ref{l0.1}, select $\rho\in(0,\,\gd)$ 
so that 
\[a_{ij}\phi_{x_ix_j}<0 \ \ \text{ and } \ \ a_{ij}\phi_{x_i}d_{x_j}>0 \   \text{ in  } \  B:=B_\rho(x_0) \subset W_\delta.
\]
%and set $B=B_\rho(x_0)$. 
Since 
\[
a_{ij}\left(\fr{\phi_{x_i}}{\gl+\ep}\right)_{x_j}+2b_i\phi_{x_i}
=\fr{1}{\gl+\ep}\left(a_{ij}\phi_{x_ix_j}-\fr{\gl_0'(d)a_{ij}\phi_{x_i}d_{x_j}}{\lambda + \ep}+2(\gl+\ep)b_i\phi_{x_i}\right),
\]
we may choose $\ep_0\in(0,\,1)$ and reselect $\rho>0$ sufficiently small so that, for $\ep\in(0,\,\ep_0),$
\beq\label{0.4}
a_{ij}\left(\fr{\phi_{x_i}}{\gl+\ep}\right)_{x_j}+2b_i\phi_{x_i}<0 \ \text{ in }  \ \ol B. % \ \text{ and } \ \ep\in(0,\,\ep_0).
\eeq
%We assume henceforth that $\rho>0$ is so small that \erf{0.4} holds. 
%Let $k\in\N$ and set $\psi^{\ep,k}=k \psi^\ep.$ 
For each $\ep\in (0,\,\ep_0)$, %and $k\in\N$, 
we select 
$x_{\ep}\in\ol B$ so that
\beq\label{0.5}
(u^\ep-(\phi+\psi^{\ep}))(x_{\ep})=\max_{\ol B}(u^\ep-(\phi+\psi^{\ep})). 
\eeq
In view of the definition of $u^+$, we may choose  $y_k \in B$ and $\ep_k \in (0,\,\ep_0)$ such that, as  $k\to \infty$, 
% $y_p\to x_0$ and  
%$\ep_p\to 0+$ so that
\[
y_k\to x_0,  \ \ep_k\to 0 \ \text{and} \ u^{\ep_k}(y_k)\to u^+(x_0).% \ \ \ \text{ as } k\to \infty.
\]
We may also assume that %, for each $k\in\N$, 
there is $\bar x\in\ol B$ such that
$\lim_{k\to\infty}x_{\ep_k}=\bar x.$ 
\smallskip

It follows from  \erf{0.5} that
\[
(u^{\ep_k}-(\phi+\psi^{\ep_k}))(x_{\ep_k})\geq (u^{\ep_k}-(\phi+\psi^{\ep_k}))(y_k), 
\]
and thus 
\beq\label{0.6}
(u^+-(\phi+\psi^{0}))(\bar x)\geq (u^+-(\phi+\psi^{0}))(x_0)=(u^+-\phi)(x_0),
\eeq
which implies $(u^+-\phi)(x_0)\leq (u^+-\phi)(\bar x)$, since 
$\psi^0\geq  0$ in $\ol B$, and that $\bar x=x_0$ because 
$x_0$ is a unique maximum point of $u^+-\phi$.

Selecting  $k\in\N$ large enough so that  $x_{\ep_k}\in B,$   
we deduce using  \erf{0.3}, \erf{0.4} and the maximum principle that, at $x_{\ep_k}$, 
\[\begin{aligned}
0&\leq a_{ij}\left(\fr{u^{\ep_k}_{x_i}}{\gl+\ep_k}\right)_{x_j}
+2b_i u^{\ep_k}_{x_i}
\\&\leq 
a_{ij}\left(\fr{(\phi+\psi^{\ep_k})_{x_i}}{\gl+\ep_k}\right)_{x_j}
+2b_i(\phi+\psi^{\ep_k})_{x_i}
\\&\leq 
a_{ij}\left(\fr{\phi_{x_i}}{\gl+\ep_k}\right)_{x_j}+2b_i\phi_{x_i}<0.
\end{aligned}
\] 
This is a contradiction and,  thus, $u^+$ is a viscosity subsolution of \erf{0.3}. 
\smallskip

The argument for the supersolution property is similar.
%Similarly, we  that 
%$u^-$ is a viscosity supersolution of \erf{0.3}. 
\eproof
%%%%%%%%%%%%%%%%%%%%%%%%%%%
\section{ The proof of Theorem \ref{thm:Ad}}

%The proof is long and we organize it in as series of preliminary lemmata. 
%\smallskip

We remark that the existence of $m^\ep\in C^2(\ol U)$ that satisfies \ref{Ade}
follows from the following Fredholm  alternative type of argument. 
\smallskip

The adjoint problem to \ref{Ade} is the Neumann boundary value problem
\beq\label{Ade-ad}
\bcases \disp
a_{ij}\left(\fr{v_{x_i}}{\gl+\ep}\right)_{x_j}+2b_i v_{x_i}=0 \ \text{ in } \  U,&\\
a_{ij}v_{x_i}\nu_j=0 \ \text{ on } \ \pl U. 
\ecases
\eeq
Since any constant function is a solution to \eqref{Ade-ad},
the eigenvalue problem 
\beq\label{Ade-ad-eigen}
\bcases \disp
a_{ij}\left(\fr{v_{x_i}}{\gl+\ep}\right)_{x_j}+2b_i v_{x_i}+\rho v= 0\  \text{ in } \ U,&\\
a_{ij}v_{x_i}\nu_j=0 \  \text{ on } \ \pl U,
\ecases
\eeq 
%where $\rho\in \R$ represents an eigenvalue, 
has $\rho=0$ as its principal 
eigenvalue. Consequently, in principle, the problem 
\beq\label{Ade-eigen}
\bcases \disp
\left(\fr{(a_{ij}v)_{x_i}}{\gl+\ep}-2b_jv\right)_{x_j}+\rho v=0 \  \text{ in } \ U,&\\ \disp
\left(\fr{(a_{ij}v)_{x_i}}{\gl+\ep}-2b_jv\right)\nu_j=0 \ \text{ on } \ \pl U,
\ecases
\eeq
should have $\rho=0$ as its principal eigenvalue and there should be a positive 
function $m^\ep\in C^2(\ol U)$ that satisfies \ref{Ade}.  
\smallskip

We organize the important parts of the proof of Theorem \ref{thm:Ad} 
in a series of lemmata. 

\begin{lem}\label{exist-me}There exists a unique solution  
$m^\ep\in C^2(\ol U)$ of \emph{\ref{Ade}}. 
%Furthermore, $m^\ep>0$ on $\ol U$, and 
Moreover, If $\mu\in C^2(\ol U)$ satisfies the first two equations of 
\emph{\ref{Ade}}, then there exists $c\in\R$ such that $\mu=c m^\ep$ on $\ol U$.   
\end{lem}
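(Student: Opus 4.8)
The plan is to set up the problem \ref{Ade} as the adjoint of an elliptic Neumann eigenvalue problem and then invoke the Krein--Rutman / Fredholm theory together with the maximum principle. First I would consider the linear operator
\[
Lv := a_{ij}\left(\fr{v_{x_i}}{\gl+\ep}\right)_{x_j}+2b_i v_{x_i}
\]
on $U$ with the conormal boundary condition $a_{ij}v_{x_i}\nu_j = 0$ on $\pl U$, noting that under \eqref{ass5}, \eqref{ass6}, \eqref{ass7} and $\ep>0$ the coefficient $a_{ij}/(\gl+\ep)$ is $C^{2,\al}(\ol U)$ and uniformly elliptic, so $L$ is a well-posed second-order elliptic operator with smooth coefficients and an oblique (here conormal) boundary condition. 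Since $L1 = 0$ and $a_{ij}(\nu_j)$-derivative of the constant vanishes, the constant function is an eigenfunction, and by the standard Donsker--Varadhan / Krein--Rutman characterization of the principal eigenvalue of such operators, $\rho = 0$ is the principal eigenvalue of \eqref{Ade-ad-eigen} and it is simple with a one-dimensional eigenspace spanned by the positive constant. This is the content of the paragraph preceding the lemma; I would cite the relevant reference (e.g. Berestycki--Nirenberg--Varadhan, or Pinsky's book) to make it rigorous.

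Next I would pass to the formal adjoint. The operator $L^*$ defined by
\[
L^* w := \left(\fr{(a_{ij}w)_{x_i}}{\gl+\ep}-2b_j w\right)_{x_j}
\]
with boundary condition $\left(\fr{(a_{ij}w)_{x_i}}{\gl+\ep}-2b_j w\right)\nu_j = 0$ on $\pl U$ is exactly the adjoint of $(L,\text{conormal b.c.})$ with respect to the $L^2(U)$ pairing --- one checks this by Green's formula \erf{vvf-1}, which shows the boundary terms match precisely when both the conormal condition on $v$ and the flux condition on $w$ hold. Since the principal eigenvalue of a second-order elliptic operator equals that of its adjoint, $\rho=0$ is also the principal eigenvalue of \eqref{Ade-eigen}, and its eigenspace is one-dimensional, spanned by an eigenfunction $m^\ep$ which (by the Krein--Rutman theorem applied to the resolvent, or by the strong maximum principle applied to the adjoint) may be chosen strictly positive on $\ol U$. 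Normalizing $\int_U m^\ep \,dx = 1$ fixes $m^\ep$ uniquely, giving existence and uniqueness of the solution to \ref{Ade}, and elliptic regularity (Schauder estimates up to the boundary, using \eqref{ass4}, \eqref{ass5}) upgrades $m^\ep$ to $C^2(\ol U)$ (indeed $C^{2,\al}$).

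For the last assertion, suppose $\mu \in C^2(\ol U)$ satisfies the first two equations of \ref{Ade}, i.e. $L^*\mu = 0$ in $U$ with the flux boundary condition. Then $\mu$ lies in the kernel of $(L^*, \text{b.c.})$, which by the simplicity of the principal eigenvalue $\rho = 0$ is one-dimensional and spanned by $m^\ep$; hence $\mu = c\,m^\ep$ for some $c \in \R$. The main obstacle I anticipate is making the Fredholm/Krein--Rutman machinery fully rigorous for the conormal (oblique-derivative) boundary value problem and its adjoint --- in particular, verifying that the kernel is exactly one-dimensional (equivalently, that $\rho=0$ is a simple eigenvalue for both $L$ and $L^*$) and that the positive eigenfunction exists and is unique up to scaling. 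This is standard but requires care with the boundary conditions; everything else (Green's formula computation, elliptic regularity, normalization) is routine.
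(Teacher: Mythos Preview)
Your outline is correct in spirit but takes a genuinely different route from the paper. You invoke Krein--Rutman/Fredholm theory abstractly: since $\rho=0$ is the simple principal eigenvalue of $(L,\text{conormal b.c.})$, the same holds for the adjoint, giving a positive eigenfunction $m^\ep$. The paper instead builds $m^\ep$ constructively. It first solves the adjoint problem \erf{Ade-f} for $\rho=-R$ with $R$ large, and here the key step---exactly the obstacle you flag---is a change of unknowns $w=e^{-\phi}v$ with $\phi=M\dist(\cdot,\pl U)$ near $\pl U$, chosen so that the transformed boundary condition has a zeroth-order coefficient $\tilde d\geq 0$ and the transformed zeroth-order interior coefficient satisfies $R-\tilde c\geq 1$. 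This is what makes the maximum principle and Schauder theory apply to the adjoint boundary value problem, whose flux condition $\bigl((a_{ij}w)_{x_i}/(\gl+\ep)-2b_jw\bigr)\nu_j=0$ otherwise has a zeroth-order term of uncontrolled sign. With the resolvent $T$ at $\rho=-R$ in hand, the paper then solves for $\rho=-r\in(-R,0)$ by monotone iteration, obtains an $L^1$ bound via the duality formula \erf{duality} against $S_r^*f$, upgrades to $C^{2,\ga}$ via Schauder, sets $m_k=|U|^{-1}S_{r_k}^*r_k$, and passes to the limit $r_k\to 0$ by compactness; positivity of $m^\ep$ follows from the strong maximum principle and Hopf's lemma.

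For the ``moreover'' assertion the paper does not cite abstract simplicity of the eigenvalue: it argues directly that given any $\mu$ solving the first two equations, one chooses $c$ with $cm^\ep-\mu\geq 0$ and equality at some point, and then the strong maximum principle plus Hopf's lemma force $cm^\ep-\mu\equiv 0$. Your approach is cleaner if one has a reference doing Krein--Rutman for oblique problems and their adjoints in this generality; the paper's approach is self-contained and makes explicit the one nontrivial point---the change of variables fixing the boundary sign---that your black-box citation would have to contain.
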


We postpone the proof of the lemma above until the end of the proof of 
Theorem \ref{thm:Ad} and we continue with several other technical steps.
\smallskip

\begin{lem} \label{exist-psi0} There exists a positive solution  
$\psi_0\in C^2(\ol V)$ to \erf{Ad2}. % that satisfies $\psi_0>0$ on $\ol V$. 
Furthermore, if  $\phi\in C^2(\ol V)$ is a solution of  \erf{Ad2}, 
then  $\phi=c \psi_0$ on $\ol V$ for some $c\in \R$.
\end{lem}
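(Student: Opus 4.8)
The plan is to read \erf{Ad2} as the formal adjoint of the regular oblique (conormal) boundary value problem
\[
B\phi:=\bigl(a_{ij}\phi_{x_ix_j},\ a_{ij}\phi_{x_i}\nu_j\bigr),\qquad
B\colon C^{2,\al}(\ol V)\to C^{\al}(\ol V)\tim C^{1,\al}(\pl V),
\]
and to extract $\psi_0$ from the cokernel of $B$. Integrating by parts twice gives, for $\phi,\psi\in C^2(\ol V)$,
\[
\int_V (a_{ij}\phi_{x_ix_j})\psi\,dx-\int_V \phi\,(a_{ij}\psi)_{x_ix_j}\,dx
=\int_{\pl V}\bigl(a_{ij}\phi_{x_i}\nu_j\,\psi-\phi\,(a_{ij}\psi)_{x_i}\nu_j\bigr)\,d\gs,
\]
so the formal adjoint of $B$ is $A\psi:=\bigl((a_{ij}\psi)_{x_ix_j},\ (a_{ij}\psi)_{x_i}\nu_j\bigr)$ and the solutions of \erf{Ad2} are precisely the elements of $\ker A$. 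Since $a_{ij}\nu_i\nu_j\ge\gth>0$ by \erf{ass7}, both $B$ and $A$ are regular elliptic problems in the sense of Agmon--Douglis--Nirenberg, with H\"older continuous coefficients because $a\in C^{2,\al}(\ol V)$; hence $B$ is Fredholm, $\ker A\subset C^{2,\al}(\ol V)$, and, by the a priori Schauder estimate and the closed range theorem, $\operatorname{Range}(B)$ is closed and equals $\{(f,g):\int_V f\psi\,dx=\int_{\pl V}g\psi\,d\gs\text{ for every }\psi\in\ker A\}$.

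To produce a nonzero solution of \erf{Ad2} it suffices to show $\ker A\ne\{0\}$, equivalently $(1,0)\notin\operatorname{Range}(B)$. But a solution of $a_{ij}\phi_{x_ix_j}=1$ in $V$, $a_{ij}\phi_{x_i}\nu_j=0$ on $\pl V$ would be a nonconstant subsolution of $a_{ij}\pl_{ij}$, so by the strong maximum principle its maximum over $\ol V$ is attained at some $x_0\in\pl V$, and Hopf's lemma in the (outward) conormal direction $a_{ij}\nu_j$ then forces $a_{ij}\phi_{x_i}\nu_j(x_0)>0$, a contradiction. Hence there is $\psi_0\in C^{2,\al}(\ol V)$, $\psi_0\not\equiv0$, solving \erf{Ad2}.

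Next I would fix the sign of $\psi_0$ and prove $\psi_0>0$ on $\ol V$. The preceding argument, applied to arbitrary $f\in C^{\al}(\ol V)$ with $f\ge0$, $f\not\equiv0$, shows $(f,0)\notin\operatorname{Range}(B)$, hence $\int_V f\psi_0\,dx\ne0$ for every such $f$; by continuity and connectedness of the cone $\{f\in C^{\al}(\ol V): f\ge0,\ f\not\equiv0\}$ this integral has a fixed sign, so after replacing $\psi_0$ by $-\psi_0$ if needed, $\int_V f\psi_0\,dx>0$ for all such $f$, and testing against nonnegative bumps supported in small balls yields $\psi_0\ge0$ on $\ol V$. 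Writing \erf{Ad2} as $a_{ij}(\psi_0)_{x_ix_j}+2(a_{ij})_{x_j}(\psi_0)_{x_i}+(a_{ij})_{x_ix_j}\psi_0=0$, with bounded coefficients, Harnack's inequality excludes an interior zero of the nonnegative $\psi_0$ (using that $V$ is connected), while a boundary zero at $x_0\in\pl V$ is impossible since there the tangential gradient of $\psi_0$ vanishes and Hopf's lemma gives $D\psi_0(x_0)=-c\nu$ with $c>0$, whence $(a_{ij}\psi_0)_{x_i}\nu_j(x_0)=-c\,a_{ij}\nu_i\nu_j\le-c\gth<0$, contradicting the boundary condition. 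Thus $\psi_0>0$ on $\ol V$; the same reasoning shows, more generally, that any $\psi\in\ker A$ with $\psi\ge0$ and $\psi\not\equiv0$ is strictly positive on $\ol V$.

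For uniqueness, let $\phi\in C^2(\ol V)$ solve \erf{Ad2}; interior and boundary Schauder estimates give $\phi\in C^{2,\al}(\ol V)$, so $\phi\in\ker A$. Since $\psi_0>0$ on the compact set $\ol V$, the number $t_0:=\min_{\ol V}(\phi/\psi_0)$ is finite, $\phi-t_0\psi_0\in\ker A$ is nonnegative on $\ol V$ and vanishes at a minimizer of $\phi/\psi_0$; by the strict positivity just proved, $\phi-t_0\psi_0\equiv0$, that is $\phi=t_0\psi_0$. I expect the main obstacle to be the functional-analytic bookkeeping of the first paragraph, namely verifying that $B$ is Fredholm with closed range and identifying its cokernel with $\ker A$ through the Green identity; the maximum-principle steps are routine. (One could instead run a Krein--Rutman argument on the adjoint of the resolvent of $B$ to obtain a positive invariant measure, but then one would need the regularity theory for the double-divergence equation with measure data to upgrade it to a $C^{2,\al}$ density, which the route above avoids.)
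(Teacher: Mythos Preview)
Your route is genuinely different from the paper's. The paper simply invokes Lemma~\ref{exist-me} with $(U,\gl+\ep,b)$ replaced by $(V,1,0)$; that lemma builds the positive solution constructively by a monotone iteration on the penalized adjoint problem (adding $-r\psi$, bounding the iterates in $L^1$ via duality with the primal resolvent $S_r$, passing to the limit by Schauder compactness, then sending $r\to 0$), and proves one--dimensionality of the solution space by applying the strong maximum principle and Hopf's lemma to $cm^\ep-\mu$. Your approach through the Fredholm theory of the conormal problem $B$ is more conceptual and avoids the iteration, at the price of quoting the ADN machinery.

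There is, however, a real gap in your positivity step. From $(f,0)\notin\operatorname{Range}(B)$ you infer $\int_V f\psi_0\,dx\ne 0$ for your \emph{fixed} $\psi_0$. But the Fredholm alternative only says the linear functional $\psi\mapsto\int_V f\psi$ is not identically zero on $\ker A$; it gives $\int_V f\psi\ne 0$ for \emph{some} $\psi\in\ker A$, possibly depending on $f$. Your implication is valid only once you know $\dim\ker A=1$, which you have not yet established---and your later uniqueness argument for $\ker A$ uses the strict positivity of $\psi_0$ that you are trying to prove, so the reasoning is circular as written.

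The repair is short: compute the Fredholm index of $B$ before choosing $\psi_0$. You already have $\ker B=\{\text{constants}\}$ by the strong maximum principle and Hopf, so $\dim\ker B=1$. The homotopy $t\mapsto\bigl((1-t)\gd_{ij}+ta_{ij}\bigr)$ keeps both the interior ellipticity and the obliqueness $a_{ij}\nu_i\nu_j\ge\gth$ throughout, so $\operatorname{ind}B=\operatorname{ind}(\Delta,\pl_\nu)=0$; hence $\dim\ker A=\dim\operatorname{coker}B=1$. Then $\psi_0$ spans $\ker A$, your sign argument on the cone of nonnegative $f$ goes through, and the rest of your proof (Harnack in the interior, Hopf at a boundary zero, and the quotient $\phi/\psi_0$ for uniqueness) is correct.
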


\bproof The claim  is a consequence of Lemma \ref{exist-me}, with 
$U$, $\gl+\ep$ and $b_i$ replaced by $V$, $1$ and $0$, respectively.  
\eproof

\begin{lem} \label{unique-m} There exists at most one $m\in C(\ol U)\cap C^2(\ol U\setminus\pl V)$ that satisfies \erf{Ad1} and \erf{Ad2}. 
\end{lem}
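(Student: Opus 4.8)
The goal is to prove uniqueness for the coupled system \erf{Ad1}--\erf{Ad2}: at most one $m\in C(\ol U)\cap C^2(\ol U\setminus\pl V)$ satisfies both. The plan is to take two solutions $m_1,m_2$ and show their difference vanishes, exploiting the divergence structure of the operators together with a carefully chosen test function coming from the subsolution $\psi^\ep$ of Lemma~\ref{l0.1} (in its $\ep=0$ form, or rather its analogue adapted to the adjoint operator). Let me outline the steps.

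\textbf{Step 1: Reduce to a ratio.} By Lemma~\ref{exist-psi0} there is a positive solution $\psi_0\in C^2(\ol V)$ of \erf{Ad2} (the homogeneous problem in $V$), unique up to a multiplicative constant; similarly, the restriction of any solution of \erf{Ad1}--\erf{Ad2} to $\ol V$ must, by Lemma~\ref{exist-psi0}, be a scalar multiple of $\psi_0$. So if $m_1,m_2$ are two solutions, write $m_i=c_i\psi_0$ on $\ol V$. The first order of business is to show that the normalization $\int_U m_i\,dx=1$ together with the matching across $\pl V$ forces $c_1=c_2$, and then that $m_1=m_2$ on all of $\ol U\setminus\ol V$ as well.

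\textbf{Step 2: Energy/integration-by-parts in $U\setminus\ol V$.} Set $\phi=m_1-m_2$ on $\ol U$. On $\ol V$, $\phi=(c_1-c_2)\psi_0$; on $U\setminus\ol V$, $\phi$ solves the homogeneous version of \erf{Ad1}, namely $-\bigl((a_{ij}\phi)_{x_i}/\gl-2b_j\phi\bigr)_{x_j}=0$ in $U\setminus\ol V$ with the conormal-type boundary condition $\bigl((a_{ij}\phi)_{x_i}/\gl-2b_j\phi\bigr)\nu_j=0$ on $\pl U$, and across $\pl V$ the matching of $\phi$ and of $(a_{ij}\phi)_{x_i}\nu_j$ (this is the content of \erf{Ad2} as the ``flux'' condition on $\pl V$, taking $V$-side and $U\setminus\ol V$-side limits). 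The natural move is to pair the equation against a suitable auxiliary function. Because the adjoint operator does not have an obvious symmetric energy, I would instead pass to the ``primal'' side: if $h$ solves the corresponding primal homogeneous problem $-a_{ij}(h_{x_i}/\gl)_{x_j}-2b_ih_{x_i}=0$ in $U\setminus\ol V$ with $h=$ const on $\pl V$ and the boundary balance on $\pl U$, then Green's formula \erf{vvf-1} (used with the appropriate domain $U\setminus\ol V$ in place of $U$, picking up $\pl V$ boundary terms) relates $\int_{U\setminus\ol V}(\text{adjoint op on }\phi)\,h$ to $\int_{U\setminus\ol V}(\text{primal op on }h)\,\phi$ plus boundary terms on $\pl U$ and $\pl V$. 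Choosing $h\equiv 1$ kills the primal interior term (since $-a_{ij}(1/\gl)_{x_j}\cdot 0-0=0$ is not quite zero, so more care is needed — one really wants $h$ to be the genuine primal solution constructed in Theorem~\ref{thm:conv}, or simply to test directly). A cleaner route: test the adjoint equation for $\phi$ against the solution $u$ of \erf{outside'}--\erf{balance} itself, exactly as in the proof of Theorem~\ref{thm:conv} where \erf{vvf-1} with $\phi=u^\ep$, $\psi=m^\ep$ gave \erf{vvf-2}; running the same computation with $u$ and $\phi$ in place produces $\int_{\pl U} a_{ij}u_{x_i}\nu_j\phi/\gl\,d\gs=\int_{\pl V}(\cdots)\,d\gs$, and since $u=g$ is fixed and $\phi=0$ on $\pl U$ one side vanishes.

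\textbf{Step 3: Exploit positivity / maximum principle in $V$ and the flux balance.} Taking $\phi=(c_1-c_2)\psi_0$ in $V$ and plugging into the $\pl V$ flux term obtained in Step~2, together with the normalization $\int_U\phi\,dx=0$ (from $\int m_1=\int m_2=1$), one gets a single scalar relation in $c_1-c_2$. Concretely: integrate $-\bigl((a_{ij}\phi)_{x_i}/\gl-2b_j\phi\bigr)_{x_j}=0$ over $U\setminus\ol V$ against the constant $1$, which by the divergence theorem and the $\pl U$ condition leaves only the $\pl V$ flux $\int_{\pl V}\bigl((a_{ij}\phi)_{x_i}/\gl-2b_j\phi\bigr)\nu_j\,d\gs=0$; separately, integrating $-(a_{ij}\phi)_{x_ix_j}=0$ over $V$ against $1$ gives $\int_{\pl V}(a_{ij}\phi)_{x_i}\nu_j\,d\gs=0$. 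Now expand $\psi_0$: since $\psi_0>0$ solves \erf{Ad2} in $V$, the sign of $(a_{ij}\psi_0)_{x_i}\nu_j$ on $\pl V$ is controlled. I expect that feeding in the explicit structure \erf{ass8} of $\gl=\gl_0(d)$ near $\pl V$ — in particular that $1/\gl$ is not integrable in the normal direction unless the flux $(a_{ij}\phi)_{x_i}\nu_j$ vanishes on $\pl V$ — will force $(a_{ij}\phi)_{x_i}\nu_j=0$ on $\pl V$, hence $\phi$ satisfies the homogeneous Neumann problem on $V$, hence (by Lemma~\ref{exist-psi0} with the uniqueness clause, or the maximum principle) $\phi$ is constant on $V$, and the $\pl V$-flux balance then forces that constant to match the $U\setminus\ol V$ side, giving $\phi\equiv 0$ there by the maximum principle for \erf{Ad1}. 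The normalization $\int_U\phi=0$ then kills the remaining constant, so $c_1=c_2$ and $m_1=m_2$.

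\textbf{Main obstacle.} The delicate point is handling the interface $\pl V$: the coefficient $1/(\gl+\ep)$ blows up as $\ep\to 0$ near $\pl V$ (where $\gl=0$), so \erf{Ad1} is a genuinely degenerate/singular problem there, and one must make precise in what sense the flux $(a_{ij}m)_{x_i}\nu_j$ on $\pl V$ is taken and why it is forced to vanish. This is exactly where assumption \erf{ass8} (the structure $\gl=\gl_0(d)$, $r\gl_0'(r)\le C_0\gl_0(r)$, non-integrability of $1/\gl_0$) enters, as flagged after \erf{ass8} and used in Lemmas~\ref{l0.1}--\ref{l0.2}. I would isolate this in a short lemma: any $m\in C(\ol U)\cap C^2(\ol U\setminus\pl V)$ solving \erf{Ad1} has $(a_{ij}m)_{x_i}\nu_j\to 0$ as one approaches $\pl V$ from $U\setminus\ol V$, because otherwise $(a_{ij}m)_{x_i}/\gl$ fails to be locally integrable and the distributional identity \erf{Ad1} cannot hold across a neighborhood of $\pl V$. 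Once that boundary vanishing is established, the rest is the standard maximum-principle and Green's-formula bookkeeping sketched above, entirely parallel to the uniqueness argument already carried out for $u$ at the end of the proof of Theorem~\ref{thm:conv}.
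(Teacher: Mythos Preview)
Your proposal has a genuine gap at exactly the point you flag as the ``main obstacle,'' and the resolution you sketch does not work.

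First, a factual error in Step~2: there is \emph{no} transmission condition across $\pl V$ in the problem. \erf{Ad1} lives on $U\setminus\ol V$ with a boundary condition only on $\pl U$; \erf{Ad2} lives on $\ol V$ with its own Neumann-type condition on $\pl V$ from the inside. Nothing in the hypotheses says the fluxes match at $\pl V$, and since $m$ is only $C(\ol U)\cap C^2(\ol U\setminus\pl V)$, the normal derivative $(a_{ij}m)_{x_i}\nu_j$ need not even have a limit on $\pl V$ from the outside. So your ``short lemma'' that this flux vanishes is both unprovable with the given regularity and not what the problem asserts. Testing the adjoint equation against $1$ on $U\setminus\ol V_\gamma$ yields $\int_{\pl V_\gamma}\bigl((a_{ij}\phi)_{x_i}/\gl-2b_j\phi\bigr)\nu_j\,d\gs=0$, but as $\gamma\to 0$ you have no control on $D\phi/\gl$ along $\pl V_\gamma$, so you cannot pass to the limit.

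The paper's argument differs from yours in two essential ways. First, instead of $\phi=m_1-m_2$ it takes $w=cm_1-m_2$ with $c>0$ chosen so that $\min_{\ol U}w=0$; since $m_1,m_2$ are both positive multiples of $\psi_0$ on $\ol V$ (Lemma~\ref{exist-psi0}), $w$ is either identically zero or strictly positive on $\ol V$, and the strong maximum principle and Hopf's lemma on $\ol U\setminus V$ and on $\pl U$ quickly reduce to the single bad case $w>0$ on $\ol U\setminus\ol V$, $w\equiv 0$ on $\ol V$. Second --- and this is the key construction you are missing --- to rule out that case the paper tests not against $1$ but against the solution $v^\gamma$ of the Dirichlet problem \erf{Dir1} on $U\setminus\ol V_\gamma$ with $v^\gamma=0$ on $\pl V_\gamma$, $v^\gamma=1$ on $\pl U$. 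Lemma~\ref{lem:Dir} (built from the barrier $\psi^0$ of Lemma~\ref{l0.1}) gives the crucial estimate $|Dv^\gamma|\le C\gl$ on $\pl V_\gamma$, so in Green's formula the $\pl V_\gamma$ boundary term $\int_{\pl V_\gamma} a_{ij}v^\gamma_{x_i}\nu_j w/\gl\,d\gs$ stays bounded and in fact tends to zero (since $w\to 0$ on $\pl V$). Passing to the limit gives $\int_{\pl U} a_{ij}v^0_{x_i}\nu_j w/\gl\,d\gs=0$, which contradicts Hopf's lemma for $v^0$ at $\pl U$ together with $w>0$ there. This Dirichlet test function, with its gradient controlled by $\gl$, is precisely the device that tames the degeneracy at $\pl V$; your integration-by-parts against constants or against the primal solution $u$ does not provide this control.
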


We prepare the next result which is needed for the proof of the lemma above. 
For $\gamma>0$ we set
\[
V_\gamma=\{x\in\R^n\mid \dist(x,V)<\gamma\},
\]
which for sufficiently small $\gamma$ is a $C^{2,\ga}$-domain and $\ol V_\gamma \subset U$,  and consider the Dirichlet problem
\beq\label{Dir1}
\bcases \disp 
a_{ij}\left(\fr{v_{x_i}}{\gl}\right)_{x_j}+2b_jv_{x_j}=0 \ \ \text{ in } U\setminus \ol V_{\gamma}, &\\[3pt]
v=0 \ \ \ \text{ on }\pl V_{\gamma}\ \ \text{ and } \ \  
v=1 \ \ \ \text{ on }\pl U. 
\ecases
\eeq 
%Hereafter we assume that $\ol V_\gamma \subset U$.  
The classical Schauder theory (see \cite{GiTr}*{Theorem 6.14})
and  the hypotheses of Theorem~\ref{thm:Ad} yield that,  for $\gamma>0$ sufficiently small,  \eqref{Dir1} has  a unique solution $v^\gamma\in C^{2,\ga}(\ol U\setminus V_\gamma)$. % if $\gamma>0$ is sufficiently small.  

\begin{lem}\label{lem:Dir}  
There exist constants  $\gamma_0\in(0,\,1)$ and $C>0$ such that,  if $\gamma\in (0,\,\gamma_0)$, then the Dirichlet problem 
\erf{Dir1} has a unique solution $v^\gamma\in C^{2,\ga}(\ol U\setminus V_\gamma)$ 
and it satisfies 
\[\bcases
|Dv^{\gamma}(x)|\leq C\gl(x) \ \ 
\text{ for all }x\in\pl V_{\gamma},&\\[3pt]
v^\gamma(x)\leq C\gL_0(d(x)) \ \ 
\text{ for all }x\in \ol U \setminus 
V_\gamma,
\ecases
\] 
where $\gL_0$ denotes the primitive of $\gl_0$ given by 
$
\gL_0(r):=\int_0^r\gl_0(t)dt. 
$
\end{lem}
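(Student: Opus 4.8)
The plan is to derive both bounds by comparison with barriers built from the supersolution $\psi^0$ of Lemma~\ref{l0.1}. Recall that, with $\gd,K$ fixed there ($K\gd\le\frac12$), $\psi^0\in C^2(\ol W_\gd)$ satisfies $a_{ij}(\psi^0_{x_i}/\gl)_{x_j}+2b_i\psi^0_{x_i}\le 0$ on $\ol W_\gd$ and equals $\int_0^{d(x)}\gl_0(t)(1-Kt)dt$; since $0<1-Kt\le 1$ for $0\le t\le\gd$, this yields $\psi^0(x)\le\gL_0(d(x))$ whenever $0\le d(x)\le\gd$, and also $\psi^0_{x_i}=\gl_0(d)(1-Kd)d_{x_i}$. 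I fix $\gd,K,\psi^0$ once and for all, set $C_0:=\big(\int_0^\gd\gl_0(t)(1-Kt)dt\big)^{-1}\in(0,\infty)$ (finite and positive since $\psi^0>0$ where $d=\gd$, by Lemma~\ref{l0.1}), and shrink the $\gamma_0$ from the Schauder theory quoted above so that, in addition, $\gamma_0<\min(\gd,1)$ and $\int_0^\gamma\gl_0(t)(1-Kt)dt\le\frac{1}{2C_0}$ for all $\gamma\in(0,\gamma_0)$; the latter is possible because this quantity is bounded by $\gL_0(\gamma)\to 0$ as $\gamma\to 0$. Then, for $\gamma\in(0,\gamma_0)$, existence and uniqueness of $v^\gamma\in C^{2,\ga}(\ol U\setminus V_\gamma)$ is the cited statement; since the operator $Lw:=a_{ij}(w_{x_i}/\gl)_{x_j}+2b_jw_{x_j}$ has no zeroth-order term and is uniformly elliptic on $\ol U\setminus V_\gamma$ (there $\gl\ge\gl_0(\gamma)>0$), the weak maximum principle gives $0\le v^\gamma\le 1$; moreover $V_\gamma=\{d<\gamma\}$, so $\pl V_\gamma=\{d=\gamma\}$, $\gl\equiv\gl_0(\gamma)$ on $\pl V_\gamma$ by \erf{ass8}, and $d\ge\gamma>0$ on $\ol U\setminus V_\gamma$, so $\gL_0(d(\cdot))$ is well defined and positive there.

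\textbf{The pointwise bound.} I work on the collar $\Sigma:=\{x:\gamma<d(x)<\gd\}$, whose closure is $\{\gamma\le d\le\gd\}\subset W_\gd\subset U$, with $\pl\Sigma=\pl V_\gamma\cup\pl V_\gd$ and $\gl\ge\gl_0(\gamma)>0$ on $\ol\Sigma$. Then $C_0\psi^0$ is a classical supersolution of $Lw=0$ on $\ol\Sigma$, one has $v^\gamma=0\le C_0\psi^0$ on $\pl V_\gamma$, and $v^\gamma\le 1=C_0\psi^0$ on $\pl V_\gd$ (where $\psi^0\equiv 1/C_0$); the weak maximum principle gives $v^\gamma\le C_0\psi^0\le C_0\gL_0(d)$ on $\ol\Sigma$. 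On the complementary set $\{x\in\ol U:d(x)\ge\gd\}$ we use $v^\gamma\le 1\le\gL_0(\gd)^{-1}\gL_0(d(x))$, since $\gL_0$ is continuous and strictly increasing on $[0,\infty)$, so $\gL_0(d(x))\ge\gL_0(\gd)>0$. As $\ol U\setminus V_\gamma$ is the union of these two sets, the second estimate of the Lemma holds with $C:=\max\!\big(C_0,\gL_0(\gd)^{-1}\big)$, independent of $\gamma$.

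\textbf{The gradient bound.} Fix $x_0\in\pl V_\gamma$. Since $C\gL_0(d)$ is positive at $x_0$ it cannot serve directly; instead I take the $\gamma$-dependent barrier $\Phi^\gamma(x):=2C_0\big(\psi^0(x)-\int_0^\gamma\gl_0(t)(1-Kt)dt\big)$, which differs from $2C_0\psi^0$ by an additive constant and is therefore again a supersolution of $Lw=0$ on $\ol\Sigma$ ($L$ annihilates constants). On $\pl V_\gamma$, $\Phi^\gamma=0=v^\gamma$, while on $\pl V_\gd$, by the smallness built into $\gamma_0$, $\Phi^\gamma=2C_0\big(\frac{1}{C_0}-\int_0^\gamma\gl_0(1-Kt)dt\big)\ge 2C_0\cdot\frac{1}{2C_0}=1\ge v^\gamma$; hence $v^\gamma\le\Phi^\gamma$ on $\ol\Sigma$ by the weak maximum principle. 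Now $v^\gamma\ge 0=v^\gamma(x_0)$ on $\ol U\setminus V_\gamma$ together with $v^\gamma\equiv 0$ on the $C^2$ hypersurface $\pl V_\gamma$ forces $Dv^\gamma(x_0)$ to be a nonnegative multiple of the unit normal $n:=Dd(x_0)$ (which points into $\Sigma$), so $|Dv^\gamma(x_0)|=Dv^\gamma(x_0)\cdot n$; and $\Phi^\gamma-v^\gamma\ge 0$ on $\ol\Sigma$ with equality at the boundary point $x_0$ gives $\big(D\Phi^\gamma(x_0)-Dv^\gamma(x_0)\big)\cdot n\ge 0$. Using $\psi^0_{x_i}=\gl_0(d)(1-Kd)d_{x_i}$ we obtain $D\Phi^\gamma(x_0)\cdot n=2C_0\gl_0(\gamma)(1-K\gamma)\le 2C_0\gl_0(\gamma)=2C_0\gl(x_0)$, whence $|Dv^\gamma(x_0)|\le 2C_0\gl(x_0)$. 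Enlarging $C$ to $\max(C,2C_0)$ makes both estimates hold with the same constant, completing the proof.

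\textbf{Expected main difficulty.} The only genuinely delicate point is the gradient estimate: one must notice that the natural pointwise barrier $\gL_0(d)$ does not touch $v^\gamma$ along $\pl V_\gamma$, replace it by the shifted barrier $\psi^0-\psi^0|_{\pl V_\gamma}$, and then check that this shift still dominates $v^\gamma$ on the \emph{outer} face $\pl V_\gd$ uniformly in $\gamma$ — which is precisely what the extra requirement $\int_0^\gamma\gl_0(t)(1-Kt)dt\le\frac{1}{2C_0}$ on $\gamma_0$ (equivalently, $\psi^0|_{\pl V_\gamma}\to 0$ as $\gamma\to 0$) guarantees. Everything else reduces to the weak maximum principle on the collar $\Sigma$, together with the structural inequality $\psi^0\le\gL_0(d)$ near $\pl V$ coming from $K\gd\le\frac12$ and the explicit formula $\psi^0_{x_i}=\gl_0(d)(1-Kd)d_{x_i}$.
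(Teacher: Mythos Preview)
Your proof is correct and follows essentially the same route as the paper: both build barriers from the supersolution $\psi^0$ of Lemma~\ref{l0.1}, compare on the collar $\{\gamma\le d\le\gd\}$ via the maximum principle, and read off the gradient bound from a shifted version of $\psi^0$ that vanishes on $\pl V_\gamma$. The only cosmetic difference is that the paper uses a single shifted barrier $w=M(\psi^0-\gL(\gamma))$ (with $M$ fixed so that $M(\gL(\gd)-\gL(\gamma_0))\ge 1$) to obtain both estimates at once, whereas you first use the unshifted barrier $C_0\psi^0$ for the pointwise bound and then the shifted barrier $\Phi^\gamma=2C_0(\psi^0-\psi^0|_{\pl V_\gamma})$ for the gradient bound; your $2C_0$ and your smallness condition on $\gamma_0$ correspond exactly to the paper's $M$ and choice of $\gamma_0$.
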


\bproof Let $\gd\in (0,\,1)$ and $\psi^0\in C^2(\ol W_{\gd})$ 
be from Lemma~\ref{l0.1}. We may assume by replacing 
$\gd>0$ by a smaller number that, if $0<\gamma<\gd$, then $U\setminus \ol V_\gamma$ is 
a $C^{2,\ga}$-domain. The Schauder theory 
guarantees that, if $\gamma\in(0,\,\gd)$, 
there is a unique solution 
$v^\gamma\in C^{2,\ga}(\ol U\setminus V_{\gamma})$ of \erf{Dir1}.  

According to the proof of Lemma~\ref{l0.1}, the function 
$\psi^0$ has the form
\[
\psi^0(x)=\gL(d(x)) \ \ \text{ in } \ol V_{\gd_0}\setminus V,
\]
where $\gL\in C^3([0,\,\gd])$ satisfies the conditions  that 
$\gL(0)=0$, $\gL(r)>0$ for $r\in (0,\gd]$, 
and $\gL$ is nondecreasing on $[0,\,\gd]$. A careful 
review of the proof assures that 
$\gL'(r)\leq 2\gl_0(r)$ for $r\in[0,\,\gd]$ and, hence, 
$\gL(r)\leq 2\gL_0(r)$ for $r\in[0,\,\gd]$.  
Also, the function $\psi^0$ 
is a supersolution of   
\[
a_{ij}\left(\fr{v_{x_i}}{\gl}\right)_{x_j}+2b_jv_{x_j}=0 \ \ \text{ in } 
V_{\gd}\setminus \ol V. 
\]

Fix constants $\gamma_0\in(0,\,\gd)$ and $M>0$ 
so that $\gL(\gamma_0)<\gL(\gd)$ and  
$M(\gL(\gd)-\gL(\gamma_0))\geq 1$.

Let  $\gamma\in(0,\,\gamma_0)$, and consider the function 
\[
w(x):=M(\psi^0(x)-\gL(\gamma))=M(\gL(d(x))-\gL(\gamma)) \ \ \text{ on }
\ol V_{\gd}\setminus V_{\gamma}. 
\] 
Note that $w=0$ on $\pl V_{\gamma}$ and 
$w\geq 1$ on  $\pl V_{\gd}$. It is clear that $w$ is a supersolution 
of 
\[
a_{ij}\left(\fr{w_{x_i}}{\gl}\right)_{x_j}+2b_jw_{x_j}=0 \ \ \text{ in } 
V_\gd \setminus \ol V_{\gamma}.
\]
Since the constant functions $0$ and $1$ are a sub- and super-solution of 
\erf{Dir1} including the boundary conditions, we see by the maximum principle that $0\leq v^\gamma\leq 1$ on $\ol U\setminus V_\gamma$. 
Using again the maximum principle in the domain $V_\gd\setminus \ol V_\gamma$, we find that $v^\gamma\leq w$ on $\ol V_\gd\setminus V_\gamma$. Thus, we have $0\leq v^\gamma\leq w$ 
on $\ol V_\gd\setminus V_\gamma$, which yields 
\[\bcases
\text{$|Dv^\gamma|\leq M\gL'(\gamma)\leq 2M\gl_0(\gamma)$ on $\pl V_\gamma$,}&\\[3pt]
v^\gamma(x)\leq M\gL(d(x))\leq 2M\gL_0(d(x)) \ \ \text{ for } x\in V_\gd\setminus V_\gamma. 
\ecases 
\] 
The last inequality is valid even for $x\in\ol U\setminus V_\gd$, 
since $2M\gL_0(r)\geq 2M\gL_0(\gd)\geq 1$ for $r\geq \gd$.  
Thus, the lemma is valid with $C=2M$.
\eproof

\bproof[Proof of Lemma~\ref{unique-m}] Let $m_1,m_2\in C(\ol U)\cap C^2(\ol U\setminus \pl V)$  satisfy   
\erf{Ad1}--\erf{Ad2} and, as in Lemma~\ref{exist-psi0}, $\psi_0\in C^2(\ol V)$  a solution  to  \erf{Ad2} 
which is positive on $\ol V$.  Since $m_1,m_2>0$ on $\ol U$, we may 
choose a constant $c>0$ so that 
\[
\min_{\ol U}(cm_1-m_2)=0,
\]
and set $w=cm_1-m_2$ on $\ol U$.  

Lemma \ref{exist-psi0} yields  $\ga_1,\ga_2>0 $ such that 
\[
m_1=\ga_1\psi_0 \ \ \text{ and } \ \ m_2=\ga_2\psi_0 \ \ \ \text{ on }\ol V.
\]
Thus, $w=(c\ga_1-\ga_2)\psi_0$ on $\ol V$, which implies that either 
$w\equiv 0$ on $\ol V$ or $w>0$ on $\ol V$. 

We show that $w\equiv 0$ on $\ol U$. Consider first the case when $w$ has a minimum point at some point in $U\setminus \ol V$ 
and observe that, 
by the strong maximum principle,  $w\equiv 0$ in 
$\ol U\setminus V$, which implies $w\equiv 0$ on $\ol V$ 
as well.  Hence, $w\equiv 0$ on $\ol U$. 

Next, we assume that $w>0$ in $U\setminus\ol V$ and $w$ attains a 
minimum value $0$ at a point $x_0\in\pl U$. Hopf's lemma then 
gives that, at $x_0$, 
\[
\left(\fr{(a_{ij}w)_{x_i}}{\gl}-2b_jw\right)\nu_j=
\fr{a_{ij}w_{x_i}\nu_j}{\gl}<0,
\] 
which contradicts the second equality of \erf{Ad1}.  
  
What remains is the possibility where $w>0$ on $\ol U\setminus \ol V$ 
and $w\equiv 0$ on $\ol V$. 

Now, let $\gamma_0\in(0,\,1)$ and $C>0$ be the constants
from Lemma~\ref{lem:Dir}. According to the lemma, \erf{Dir1} 
has a solution $v^\gamma\in C^{2,\ga}(\ol U\setminus V_\gamma)$, 
$|Dv^\gamma|\leq C\gl $ on $\pl V_\gamma$,  
and 
\beq \label{uni-m1}
0\leq v^\gamma(x)\leq C\gL_0(d(x)) \ \ \text{ for } \ \ x\in\ol U\setminus V_\gamma,
\eeq 
 where the nonnegativity of $v^\gamma$ is a consequence of the maximum principle and $\gL_0$ is the primitive of $\gl_0$ chosen as in Lemma~\ref{lem:Dir}. 

By the Schauder estimates, for any compact $K\subset \ol U\setminus \ol V$, there exists $C_K>0$ such that, 
if $\gamma>0$ is sufficiently small, then  
$\|v^\gamma\|_{C^{2,\ga}(K)}\leq C_K$.  
Thus, we may choose a sequence $\{\gamma_k\}_{k\in\N}\subset (0,\,\gamma_0)$ converging to zero and a function $v^0\in 
C^2(\ol U\setminus \ol V)$ 
such that, for any compact $K\subset \ol U\setminus \ol V$, 
as $k\to \infty$, 
\[
v^{\gamma_k} \to v^0 \ \ \text{ in }C^2(K). 
\] 
Moreover, in view of \erf{uni-m1}, 
we may assume that $v^0\in C(\ol U\setminus V)$, 
$v^0=0$ on $\pl V$, and $v^0=1$ on $\pl U$

Applying  Green's formula 
\erf{vvf-1},  with $(\phi,\psi,\gl+\ep, U)$ replaced by $(v^\gamma,w,\gl, U\setminus \ol V_\gamma)$, we get
\beq\label{uni-m2}
0=\int_{\pl U}\fr{a_{ij}v_{x_i}^\gamma \nu_j w}{\gl}d\gs
-\int_{\pl V_\gamma}\fr{a_{ij}v_{x_i}^\gamma \nu_j w}{\gl}d\gs,
\eeq
where the unit normal vector $\nu$ on $\pl V_\gamma$ is taken 
as being exterior normal to $V_\gamma$. 
\smallskip

Note that, in view of by Lemma~\ref{lem:Dir} and,  for some independent of $\gamma$,  $C_1>0$,  
\[
\Big|\int_{\pl V_\gamma}\fr{a_{ij}v_{x_i}^\gamma \nu_j m}{\gl}d\gs
\Big| \leq C_1 \|m\|_{C(\pl V_\gamma)}\|\gl^{-1}Dv^\gamma\|_{C(\pl V_\gamma)}\leq CC_1\|m\|_{C(\pl V_\gamma)}.
\]
%where $C_1>0$ is a constant independent of $\gamma$. 

Setting $\gamma=\gamma_k$ and sending $k\to\infty$, 
we obtain from \erf{uni-m2} 
\beq\label{uni-m3}
\int_{\pl U}\fr{a_{ij}v^0_{x_i}\nu_j m }{\gl}d\gs=0. 
\eeq
It is obvious that $v^0\in C(\ol U\setminus V)\cap C^2(\ol U\setminus \ol V)$ solves \erf{Dir1}, with $V_\gamma$ replaced by $V$, and, for all $x\in \pl U$, $v_0(x)=1=\max_{\ol U \setminus V} v^0. $ By the strong maximum principle and 
Hopf's lemma, we deduce that 
\[
a_{ij}v_{x_i}^0\nu_j>0 \ \ \text{ on } \pl U. 
\]  
In our current situation, we have $m>0$ on $\pl U$, which together with the above inequalities gives a contradiction to \erf{uni-m3}, and   
thus we conclude that $w\equiv 0$ on $\ol U$. 

The third identity of \erf{Ad1} yields 
\[
0=\int_U wdx=c\int_U m_1dx-\int_U m_2dx=c-1,
\] 
from which we get $c=1$, and, thus, $m_1-m_2=w=0$ on $\ol U$. 
\eproof

\begin{lem} \label{m+-} For each $\ep\in(0,\,1)$, let $m^\ep$ be the unique solution to  \emph{\ref{Ade}}.
% and consider a sequence $\{\ep_j\}_{j\in\N}\subset (0,\,1)$ that  converges  to zero.
Assume that the family 
$\{m^{\ep_j}\}_{j\in\N}$ is uniformly bounded on $\ol U$, and let 
$m^\pm$ on $\ol U$ be the relaxed upper and lower limit of the $m^\ep$'s. 
%given by  
%\beq\label{def-m+-}
%m^+:=\limsup_{j\to\infty}{\kern-3pt}^*m^{\ep_j}\ \ \text{ and } \ \  m^-:=\liminf_{j\to\infty}{\kern-3pt}_* m^{\ep_j}.
%\eeq
Then $m^+$ and $m^-$ are respectively a viscosity sub- and super-solution to \erf{Ad2}  as functions on $\ol V$. 
\end{lem}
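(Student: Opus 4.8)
The plan is to prove separately, in the viscosity formulation of \erf{Ad2}, the interior inequality $-(a_{ij}m)_{x_ix_j}=0$ in $V$ and the boundary inequality $(a_{ij}m)_{x_i}\nu_j=0$ on $\pl V$, carrying out the details only for $m^+$ (subsolution); the statement for $m^-$ (supersolution) follows by reversing all inequalities. For the interior part, since $\gl\equiv 0$ on $\ol V$ the factor $1/(\gl+\ep_j)$ equals the constant $1/\ep_j$ on $V$, so multiplying the first equation of \ref{Ade} by $\ep_j$ shows that each $m^{\ep_j}$ is a classical, hence viscosity, solution in $V$ of $-(a_{ij}m^{\ep_j})_{x_ix_j}+2\ep_j(b_jm^{\ep_j})_{x_j}=0$. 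Expanding the divergences, this is a linear uniformly elliptic equation with $C^\ga(\ol V)$ coefficients converging uniformly on $\ol V$, as $\ep_j\to0$, to those of $-(a_{ij}m)_{x_ix_j}=0$; since the $m^{\ep_j}$ are uniformly bounded, the standard stability of viscosity sub/supersolutions under relaxed limits (see \cite{CIL}) gives that $m^+$ is a viscosity subsolution of $-(a_{ij}m)_{x_ix_j}=0$ in $V$.

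For the boundary part I would argue by contradiction, mimicking the proof of Lemma~\ref{lem:E}(ii) with the non-adjoint operator there replaced by $\mathcal M^\ep w:=\big(\fr{(a_{ij}w)_{x_i}}{\gl+\ep}-2b_jw\big)_{x_j}$ and Lemma~\ref{l0.1} replaced by its adjoint analogue (Lemma~\ref{l0.2}), which provides $\chi^\ep\in C^2(\ol W_\gd)$ with $\chi^0\equiv0$ on $\ol V\cap\ol W_\gd$, $\chi^0>0$ on $\ol W_\gd\setminus\ol V$, $\chi^\ep\to\chi^0$ in $C^2(\ol W_\gd)$, and $\mathcal M^\ep\chi^\ep\le 0$ in $\ol W_\gd$. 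Let $\phi\in C^2(\ol V)$ be such that $m^+-\phi$ has a maximum over $\ol V$ at $x_0\in\pl V$; after adding $|x-x_0|^4$ to $\phi$ (harmless since its first and second derivatives vanish at $x_0$) and then a constant so that $\phi(x_0)=m^+(x_0)$ (harmless since the viscosity inequalities for \erf{Ad2} carry the value $m^+(x_0)$ in their zeroth-order slot), assume the maximum is strict, global on $\ol V$, and of value $0$, and suppose for contradiction that $-(a_{ij}\phi)_{x_ix_j}(x_0)>0$ and $(a_{ij}\phi)_{x_i}(x_0)\nu_j(x_0)>0$; extend $\phi$ to $C^2(\ol W_\gd)$. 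On the collar, where $\gl=\gl_0(d)$, one has
\[
\mathcal M^\ep\phi=\frac1{\gl+\ep}\Big((a_{ij}\phi)_{x_ix_j}-\frac{\gl_0'(d)\,(a_{ij}\phi)_{x_i}d_{x_j}}{\gl+\ep}-2(\gl+\ep)(b_j\phi)_{x_j}\Big),
\]
and the analogous regular identity on $V$ (with $\gl_0'(d)\equiv0$); the two strict inequalities at $x_0$ then let one choose $\ep_0\in(0,1)$ and a ball $B=B_\rho(x_0)\subset W_\gd$ so that $\mathcal M^\ep(\phi+\chi^\ep)=\mathcal M^\ep\phi+\mathcal M^\ep\chi^\ep<0$ in $\ol B$ for $\ep<\ep_0$ — the boundary hypothesis being precisely what makes the singular term $\gl_0'(d)(a_{ij}\phi)_{x_i}d_{x_j}/(\gl+\ep)$ non-positive. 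Taking $x_\ep\in\ol B$ maximising $m^\ep-(\phi+\chi^\ep)$ over $\ol B$, and a sequence $y_k\to x_0$, $\ep_k\to0$ with $m^{\ep_k}(y_k)\to m^+(x_0)$ from the definition of $m^+$, the strictness of the maximum of $m^+-\phi$ together with $\chi^0\ge0$ force, along a subsequence, $x_{\ep_k}\to x_0$ (hence $x_{\ep_k}\in B$ for large $k$) and $(m^{\ep_k}-(\phi+\chi^{\ep_k}))(x_{\ep_k})\to0$; at the interior maximum $x_{\ep_k}$, combining $\mathcal M^{\ep_k}m^{\ep_k}=0$, the vanishing first difference and non-positive second difference, the ellipticity of the principal part of $\mathcal M^{\ep_k}$, and $\mathcal M^{\ep_k}(\phi+\chi^{\ep_k})<0$ — together with the control of the zeroth-order term described below — yields the contradiction, and hence $m^+$ is a viscosity subsolution of \erf{Ad2} on $\ol V$.

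The main obstacle is twofold. First is the uniform-in-$\ep$ construction of $\chi^\ep$ (Lemma~\ref{l0.2}): the principal part is that of Lemma~\ref{l0.1}, but expanding $\big(\tfrac{(a_{ij}w)_{x_i}}{\gl+\ep}\big)_{x_j}$ for the adjoint operator produces the a priori singular factors $\gl_0'(d)/(\gl_0(d)+\ep)$ and $\gl_0'(d)/(\gl_0(d)+\ep)^2$, which are tamed by \erf{ass8} — namely $r\gl_0'(r)\le C_0\gl_0(r)$, whence $\gL_0(r)\le r\gl_0(r)$ and $\chi^\ep/(\gl_0(d)+\ep)=O(d)$ — after which the leftover bounded terms are absorbed into the negative term $-Ka_{ij}d_{x_i}d_{x_j}$ by taking $K$ large, exactly as in Lemma~\ref{l0.1}. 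Second, and more delicate, unlike the operator in Lemma~\ref{lem:E} the adjoint operator carries a zeroth-order term whose coefficient is singular in $\ep$ near $\pl V$ (of order $1/\ep$ on $\ol V$ and $1/d$ on the collar), so at $x_{\ep_k}$ the zeroth-order contribution $c^{\ep_k}(x_{\ep_k})\,(m^{\ep_k}-(\phi+\chi^{\ep_k}))(x_{\ep_k})$ cannot simply be discarded. It is controlled by distinguishing whether $x_{\ep_k}\in\ol V$ — where one works with the $\ep_k$-rescaled equation $-(a_{ij}m^{\ep_k})_{x_ix_j}+2\ep_k(b_jm^{\ep_k})_{x_j}=0$, whose coefficients are bounded uniformly in $\ep_k$, so that the vanishing of the maximum value kills the zeroth-order term in the limit — or $x_{\ep_k}\in W_\gd\setminus\ol V$, where, since $m^{\ep_k}>0$ is bounded below near $\pl V$ uniformly in $k$ (by a uniform Harnack inequality, as $\int_U m^{\ep_k}=1$ and the $m^{\ep_k}$ are bounded above), one passes to the ground-state form $u\mapsto u/m^{\ep_k}$, which removes the zeroth-order term at the cost of an extra first-order drift and reduces the situation to that of Lemma~\ref{lem:E}(ii). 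Making this last case work cleanly — including checking that the barrier (or $\chi^\ep/m^\ep$) still serves for the transformed operator — is the heart of the argument.
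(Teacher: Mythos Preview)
Your overall strategy coincides with the paper's: the paper states that the proof is ``very similar to the one of Lemma~\ref{lem:E}, with the role of Lemma~\ref{l0.1} replaced by Lemma~\ref{l0.2}'', and your interior argument and boundary contradiction scheme are exactly that. You are also right that, unlike in Lemma~\ref{lem:E}, the adjoint operator $\mathcal M^\ep$ carries a zeroth-order coefficient $c^\ep=\tfrac{a_{ij,x_ix_j}}{\gl+\ep}-\tfrac{\gl_0'(d)a_{ij,x_i}d_{x_j}}{(\gl+\ep)^2}-2b_{j,x_j}$, so at the interior maximum $x_{\ep_k}$ one only gets
\[
0=\mathcal M^{\ep_k}m^{\ep_k}\le \mathcal M^{\ep_k}(\phi+\chi^{\ep_k})+c^{\ep_k}\,\Delta_k,
\qquad \Delta_k:=(m^{\ep_k}-\phi-\chi^{\ep_k})(x_{\ep_k})\to 0.
\]
However, your proposed cure --- the ground-state substitution $u\mapsto u/m^{\ep_k}$ on the collar --- does not close. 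That transform replaces the zeroth-order term by an extra drift $2\,\tfrac{a_{ij}}{\gl+\ep_k}\,\tfrac{(m^{\ep_k})_{x_j}}{m^{\ep_k}}$, and you have no uniform-in-$\ep$ control of $Dm^{\ep_k}$ near $\pl V$: the Schauder bounds \erf{int-est-me}--\erf{est-me} are only uniform on compacta of $\ol U\setminus\pl V$. So the ``reduces to Lemma~\ref{lem:E}'' step would inherit an uncontrolled first-order coefficient and fail.

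The resolution the paper has in mind is simpler and needs no case split. Multiply the displayed inequality by $(\gl+\ep_k)$ and observe that $c^{\ep_k}$ and $\mathcal M^{\ep_k}\phi$ carry \emph{the same} singular factors $1/(\gl+\ep_k)$ and $\gl_0'(d)/(\gl+\ep_k)^2$; using $(\gl+\ep_k)\mathcal M^{\ep_k}\chi^{\ep_k}\le 0$ from Lemma~\ref{l0.2} one gets, at $x_{\ep_k}$,
\[
0\le \big[(a_{ij}\phi)_{x_ix_j}+a_{ij,x_ix_j}\Delta_k\big]
-\fr{\gl_0'(d)}{\gl+\ep_k}\big[(a_{ij}\phi)_{x_i}d_{x_j}+a_{ij,x_i}d_{x_j}\Delta_k\big]
+O(\gl+\ep_k).
\]
Since $x_{\ep_k}\to x_0$ and $\Delta_k\to 0$, the first bracket tends to $(a_{ij}\phi)_{x_ix_j}(x_0)<0$ and the second bracket tends to $(a_{ij}\phi)_{x_i}\nu_j(x_0)>0$, while $\gl_0'(d)/(\gl+\ep_k)\ge 0$; hence the right side is eventually strictly negative, a contradiction. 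In short, the two contradiction hypotheses are precisely what make the $\phi$-contributions dominate $c^{\ep_k}\Delta_k$ at every singular scale, so no Harnack inequality or Doob transform is needed.
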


The proof of Lemma \ref{m+-} is very similar to the one of Lemma~\ref{lem:E}, with the role of Lemma~\ref{l0.1} 
replaced by Lemma~\ref{l0.2} below, hence we omit it.

 \begin{lem}\label{l0.2}There exist  $\gd\in(0,\,\gd_0)$ and, for each $\ep\in[0,\,1)$,   $\psi^\ep\in C^2(\ol W_\gd)$
% , where
%\[W_\gd:=(\pl V)_\gd=\{x\in\R^n\mid \dist(x,\pl V)<\gd\}, 
%\]
such that 
\beq\label{0.11}
\left(\fr{(a_{ij}\psi^\ep)_{x_i}}{\gl+\ep}\right)_{x_j}-2(b_i\psi^\ep)_{x_i}\leq 0 \ \ \text{ in }\ol W_\gd 
\ \text{ if }\ \ep>0,
\eeq
and, as $\ep \to 0$, 
$\psi^\ep \to \psi^0 \ \text{ in } \ C^2(\ol W_\gd)$ with 
$ \psi^0\equiv 0 \  \text{ in } \ \ol V\cap \ol W_\gd$ and $\psi^0>0 \  \text{ in } \ \ol W_\gd\setminus\ol V.$
%\]
%and 
%\[
%\psi^0(x)>0 \ \ \ \text{ in } \ol W_\gd\setminus\ol V.
%\]
\end{lem}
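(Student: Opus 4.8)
The plan is to imitate the proof of Lemma~\ref{l0.1}, using the \emph{same} profile built from the signed distance $d$. First I fix $\gd>0$ small (in particular $\gd<\gd_0$) so that $\ol W_\gd\subset U$, so that $d\in C^2(\ol W_\gd)$ with $|Dd|=1$ on $\ol W_\gd$ (possible since $\pl V\in C^{2,\ga}$), and so that $\gl(x)=\gl_0(d(x))$ on all of $\ol W_\gd$; the last holds on $\ol W_\gd\setminus\ol V$ by \eqref{ass8} and holds on $\ol W_\gd\cap\ol V$ as well, where both sides vanish by \eqref{ass7'} together with $\gl_0\equiv0$ on $(-\infty,0]$. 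Then, for $K>0$ with $K\gd\le\frac12$ and $\ep\in[0,1)$, I set
\[
\psi^\ep(x):=\int_0^{d(x)}(\gl_0(t)+\ep)(1-Kt)\,dt,\qquad x\in\ol W_\gd,
\]
which is precisely the function used in Lemma~\ref{l0.1} and lies in $C^2(\ol W_\gd)$. The positivity of $\psi^0$ on $\ol W_\gd\setminus\ol V$, its vanishing on $\ol W_\gd\cap\ol V$, and the convergence $\psi^\ep\to\psi^0$ in $C^2(\ol W_\gd)$ — valid because $\psi^\ep-\psi^0=\ep\int_0^{d(x)}(1-Kt)\,dt$ — follow verbatim from the corresponding part of the proof of Lemma~\ref{l0.1}.

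The substance is the differential inequality \eqref{0.11}. As in Lemma~\ref{l0.1}, since $\psi^\ep_{x_i}=(\gl_0(d)+\ep)(1-Kd)d_{x_i}$, the factor $\gl_0(d)+\ep$ cancels upon division:
\[
\frac{(a_{ij}\psi^\ep)_{x_i}}{\gl+\ep}=\frac{a_{ij,x_i}\,\Phi^\ep(d)}{\gl_0(d)+\ep}+a_{ij}(1-Kd)d_{x_i},\qquad \Phi^\ep(r):=\int_0^r(\gl_0(t)+\ep)(1-Kt)\,dt.
\]
Differentiating once more in $x_j$ (summation convention), and using $\gl_{x_j}=\gl_0'(d)d_{x_j}$ on $\ol W_\gd$, the $x_j$-derivative of $a_{ij}(1-Kd)d_{x_i}$ contributes the leading term $-K a_{ij}d_{x_i}d_{x_j}$, which is $\le-K\gth$ by \eqref{ass7} and $|Dd|=1$; the remaining terms of the left side of \eqref{0.11} are
\[
\frac{a_{ij,x_ix_j}\Phi^\ep(d)}{\gl_0(d)+\ep},\quad -\frac{\gl_0'(d)\,a_{ij,x_i}d_{x_j}\,\Phi^\ep(d)}{(\gl_0(d)+\ep)^2},\quad 2a_{ij,x_i}(1-Kd)d_{x_j}+a_{ij}(1-Kd)d_{x_ix_j},
\]
together with $-2b_{i,x_i}\Phi^\ep(d)-2b_i(\gl_0(d)+\ep)(1-Kd)d_{x_i}$.

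It remains to bound all of these by a constant $C_1$ independent of $\ep\in[0,1)$ and of $K$ (as long as $K\gd\le\frac12$). The last two groups are harmless: $|1-Kd|\le\frac32$, $\gl_0$ is bounded, and $a,b\in C^{2,\ga}(\ol U)$ by \eqref{ass5}. For the two quotients, monotonicity of $\gl_0$ gives $|\Phi^\ep(r)|\le\frac32\,r\,(\gl_0(r)+\ep)$ for $r\in[0,\gd]$ (and $\Phi^\ep(r)/(\gl_0(r)+\ep)=\int_0^r(1-Kt)\,dt$ for $r\le0$, with $\gl_0'=0$ there), so $\Phi^\ep(d)/(\gl_0(d)+\ep)$ is bounded by $\frac32\gd$ and, crucially invoking $r\gl_0'(r)\le C_0\gl_0(r)$ from \eqref{ass8},
\[
\frac{\gl_0'(d)\,\Phi^\ep(d)}{(\gl_0(d)+\ep)^2}\le\frac32\,\frac{d\,\gl_0'(d)}{\gl_0(d)+\ep}\le\frac32\,C_0 .
\]
Hence the left side of \eqref{0.11} is $\le-K\gth+C_1$ on $\ol W_\gd$; fixing $K$ with $K\gth\ge C_1$, and then shrinking $\gd$ if necessary so that still $K\gd\le\frac12$, gives \eqref{0.11}. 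The only genuine obstacle is precisely this last bookkeeping: one must verify that, after the exact cancellation, each of the potentially singular quotients $\Phi^\ep(d)/(\gl_0(d)+\ep)$ and $\gl_0'(d)\Phi^\ep(d)/(\gl_0(d)+\ep)^2$ stays bounded uniformly as $\ep,d\to0$, and assumption \eqref{ass8} — specifically $\gl_0$ nondecreasing and $r\gl_0'(r)\le C_0\gl_0(r)$ — is exactly what is needed, just as in Lemma~\ref{l0.1}.
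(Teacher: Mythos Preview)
Your proof is correct and follows essentially the same approach as the paper: you use the identical test function $\psi^\ep(x)=\int_0^{d(x)}(\gl_0(t)+\ep)(1-Kt)\,dt$, extract the dominant term $-K\gth$, and control the potentially singular quotients via the monotonicity of $\gl_0$ and the hypothesis $r\gl_0'(r)\le C_0\gl_0(r)$ from \eqref{ass8}. The paper's only organizational difference is that it first isolates the piece $a_{ij}\big(\psi^\ep_{x_i}/(\gl+\ep)\big)_{x_j}$ already bounded in Lemma~\ref{l0.1} and then estimates the correction terms $|\psi^\ep|\le 2\gd(\gl+\ep)$ and $|\gl_{x_j}\psi^\ep|\le 2C_0(\gl+\ep)^2$ separately, whereas you expand everything at once; the underlying estimates are identical.
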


The proof of the lemma above is similar to, but
slightly more involved than that of Lemma \ref{l0.1}, which needed 
the full strength of \eqref{ass8}. % is needed. 

\bproof Let $\gd\in(0,\,\gd_0)$ and $K$ be positive constants such that $K\gd \leq \fr 12$ and $\ol {W}_\gd \subset U$. 
As in the proof of Lemma \ref{l0.1}, we define, for $(\ep,x)\in [0,\,1]\tim \ol W_\gd,. $
\[
\psi^\ep(x)=\int_0^{d(x)}(\gl_0(t)+\ep)(1-Kt)dt.  %\ \ \ \text{ for }(\ep,x)\in [0,\,1]\tim \ol W_\gd,
\]
and note that the functions $\psi^\ep$ have all the claimed properties except \erf{0.11}. 
\smallskip 

To show  \erf{0.11}, we fix $\ep\in(0,\,1)$ and observe first that 
\[\begin{aligned}
\left(\fr{(a_{ij}\psi^\ep)_{x_i}}{\gl+\ep}\right)_{x_j}
&=\left(\fr{a_{ij}\psi^\ep_{x_i}+a_{ij,x_i} \psi^\ep}{\gl+\ep}\right)_{x_j}
\\&=\fr{a_{ij}\psi^\ep_{x_ix_j}+2a_{ij,x_i}\psi^\ep_{x_j}+a_{ij,x_ix_j}\psi^\ep}{\gl+\ep}
-\fr{a_{ij}\psi^\ep_{x_i}\gl_{x_j}+a_{ij,x_i}\psi^\ep\gl_{x_j}}{(\gl+\ep)^2}
\\&=a_{ij}\left(\fr{\psi^\ep_{x_i}}{\gl+\ep}\right)_{x_j}
+\fr{2a_{ij,x_i}\psi^\ep_{x_j}+a_{ij,x_ix_j}\psi^\ep}{\gl+\ep}
-\fr{a_{ij,x_i}\psi^\ep\gl_{x_j}}{(\gl+\ep)^2}.
\end{aligned}
\]
As seen  in the proof of Lemma~\ref{l0.1}, we have 
\[
|D\psi^\ep|=(\gl+\ep)(1-Kd(x))|Dd|\leq 2(\gl+\ep). 
\]
and, for some $C_1>0$,  
\[
a_{ij}\left(\fr{\psi^\ep_{x_i}}{\gl+\ep}\right)_{x_j}\leq C_1-K\gth \ \ \text{on} \ \  \ol W_\gd.
\]
%Similarly to the proof of Lemma \ref{l0.1}, noting that $|D\psi^\ep|\leq 2(\gl+\ep)$, we can find $C_1>0$ such that, on $\ol W_\gd$,
%and
%\[
%|D\psi^\ep|\leq 2(\gl+\ep),
%\]
%we also have 
%\[
%\fr{2|a_{ij,x_i}\psi^\ep_{x_j}|}{\gl+\ep}\leq C_1  \text \ \ \text{ on }\ol W_\gd.
%\]
%for some constant $C_2$. Similarly, 
Moreover, if $C_0$ is the constant from \erf{ass8}, % we have
\[
|\psi^\ep|\leq 2\Big|\int_0^{d(x)}(\gl_0(t)+\ep)dt\Big|
\leq 2(\gl+\ep)|d(x)|\leq 2\gd(\gl+\ep), % \ \ \ \text{ on }\ol W_\gd,
\]
and 
\[\begin{aligned}
|\gl_{x_j}\psi^\ep|
\leq \gl_0'(d(x))|d_{x_j}||\psi^\ep|
\leq 2\gl_0'(d(x))|d(x)|(\gl+\ep)\leq 2C_0(\gl(x)+\ep)^2.
% \ \  \text{ for }x\in\ol W_\gd,
\end{aligned}\]
%where $C_0$ is the constant from \erf{ass8}. 

Hence, we can choose a constant $C_2>0$, which is independent of $K$ and $\ep$, such that  
\[
\left|\fr{2a_{ij,x_i}\psi^\ep_{x_j}+a_{ij,x_ix_j}\psi^\ep}{\gl+\ep}
-\fr{a_{ij,x_i}\psi^\ep\gl_{x_j}}{(\gl+\ep)^2}\right|\leq C_2.
\]
%\left|\fr{a_{ij,x_i}\psi^\ep}{\gl+\ep}\right|+\left|\fr{a_{ij,x_i}\gl_{x_j}\psi^\ep}{(\gl+\ep)^2}\right| \leq C_2 \ \ \text{ on }\ol W_\gd.
%\]
%and, finally, choose  $C_3>0$ such that 
and 
\[
|(b_i\psi^\ep)_{x_i}|\leq |b_i||\psi_{x_i}^\ep|+|b_{i,x_i}\psi^\ep|\leq C_2. 
\]
It follows that 
\[
\left(\fr{(a_{ij}\psi^\ep)_{x_i}}{\gl+\ep}\right)_{x_j}-2(b_i\psi^\ep)_{x_i}\leq C_1+2C_2-\gth K \ \ 
\text{ on }\ol W_\gd.
\]
Choosing $K\geq (C_1+2C_2)/\gth$, we obtain \eqref{0.11}.
%\[
%\left(\fr{(a_{ij}\psi^\ep)_{x_i}}{\gl+\ep}\right)_{x_j}-2(b_i\psi)_{x_i}\leq 0 \ \ \text{ on }\ol W_\gd. \qedhere
%\]
\eproof

\bproof[Proof of Theorem \ref{thm:Ad}] Assertion (i) is an immediate 
consequence of Lemma \ref{exist-me}. 
According to \cite{Li83}*{Lemma 3.1}, there exists 
 $C_1>0$, which is independent of $\ep\in(0,\,1)$,
such that 
\beq \label{sup-me}
\sup_{\ep\in (0,\,1)}\|m^\ep\|_{C(\ol U)} \leq C_1. %\ \ \ \text{ for }\ep\in (0,\,1).
\eeq
The interior Schauder estimates (\cite{GiTr}*{Corollary 6.3}) also imply that,
for each compact $K\subset U\setminus \pl V$, there 
exists % $\ga_K\in (0,\,1)$ and  
$C_K>0$, again independent of $\ep$, such that
\beq\label{int-est-me}
\sup_{\ep\in (0,\,1)}\|m^\ep\|_{C^{2,\ga}(K)}\leq C_K. %\ \ \ \text{ for }\ep\in (0,\,1).
\eeq

We choose a smooth domain 
$W$ such that $\ol V \subset W$ and $\ol W\subset U$ 
and set 
\[
h^\ep=\left(\fr{(a_{ij} m^\ep)_{x_i}}{\gl+\ep}-2b_j m^\ep\right)\nu_j
\ \text{ on } \ \pl W,
\]
where $\nu$ denotes the inward unit normal vector of $W$. 
%at $x\in\pl W$. 
\smallskip

Observe that $w=m^\ep$ satisfies 
\[
\bcases\disp
\left(\fr{(a_{ij}w)_{x_i}}{\gl+\ep}-2b_jw\right)_{x_j}=0 \  \text{ in }  \ U\setminus \ol W,&\\[5pt] \disp
\left(\fr{(a_{ij}w)_{x_i}}{\gl+\ep}-2b_j w\right)\nu_j=0 \ 
\text{ on } \ \pl U,&\\[7pt] \disp
\left(\fr{(a_{ij}w)_{x_i}}{\gl+\ep}-2b_j w\right)\nu_j=h^\ep \  \text{ on } \ \pl W. 
\ecases
\]
We use the global Schauder estimates (\cite{GiTr}*{Theorem 6.30}) to find   %$\ga_W\in(0,\,1)$ and  
$C_W>0$, independent of $\ep$, such that 
\beq \label{b-est-me}
\sup_{\ep\in (0,\,1)}\|m^\ep\|_{C^{2,\ga}(\ol  U\setminus W)}\leq C_W. %\ \ \ \text{ for } \ \ep\in (0,\,1).
\eeq

Combining \erf{int-est-me} and \erf{b-est-me} shows that, 
for each compact $K\subset \ol U\setminus \pl V,$ 
there exists   %$\ga_K\in (0,\,1)$ and 
$C_K>0$, independent of $\ep$, such that
\beq\label{est-me}
\sup_{\ep\in (0,\,1)}\|m^\ep\|_{C^{2,\ga}(K)} \leq C_K. %\ \ \ \text{ for }\ep\in (0,\,1). 
\eeq
We may then select a sequence $\ep_j\to 0$ such that, as $j\to\infty$ and for some $m\in C(\ol U)\cap C^2(\ol U\setminus \pl V)$,
\beq \label{conv-me}
m^{\ep_j} \to m \ \text{ in } \ C^{2}(\ol U\setminus \pl V).
\eeq 

Let $m^+$ and $m^-$ be the relaxed upper and lower limits of the $m^{\ep_j}$'s, which exist in view of \erf{sup-me}, 
and observe that 
$m=m^+=m^-$, as function on $\ol U\setminus \ol V$, is a solution to 
\beq\label{6}
\bcases\disp
\left(\fr {(a_{ij}m)_{x_i}}{\gl}-2b_jm\right)_{x_j}=0 \ \text{ in }  \ U\setminus \ol V,&\\[3pt] \disp
\left(\fr{(a_{ij}m)_{x_i}}{\gl}-2b_jm\right)\nu_j=0 \  \text{ on } \ \pl U,
\ecases
\eeq
while, in view of Lemma \ref{m+-}, $m^+$ and $m^-$, as functions on $\ol V$, are respectively a sub- and super-solution to 
\beq\label{7}
\bcases\disp 
(a_{ij}m)_{x_ix_j}=0  \ \text{ in } \ V,&\\[3pt] \disp
(a_{ij}m)_{x_i}\nu_j=0\  \text{ on } \ \pl V.
\ecases
\eeq
%Recall that Lemma \ref{exist-psi0} yields there is  a positive solution 
Let $\psi_0\in C^2(\ol V)$ be the positive solution  to  \erf{Ad2} given by  Lemma \ref{exist-psi0}.
Since $m^+\geq m^-\geq 0$, there are exist constants $c^\pm\geq 0$ such that  
\[
\max_{\ol V}(m^+-c^+\psi_0)=0 \ \ \text{ and } \ \ \min_{\ol V}(m^--c^-\psi_0)=0.  
\]
Using the strong maximum principle,  we find 
\[
m^+=c^+ \psi_0 \ \ \text{ and } \ \ m^-=c^-\psi_0 \  \text{ on } \ \ol V.
\]
Since $m^+=m^-$ in $\ol U\setminus\pl V$, we must have $c^+=c^-$ and, accordingly, 
$m^+=m^-=c^+\psi_0$ on $\ol V$ for some $c^+\geq 0$.  Thus, 
\[
m^+=m^- \  \text{ on } \ \ol U, 
\] 
and, therefore, if we set $m=m^+=m^-$ on $\pl V$, then
\beq\label{8}
\lim_{j\to\infty}m^{\ep_j}=m \  \text{ in } \ C(\ol U),
\eeq
which completes the proof of assertion (ii). 
\smallskip

Now, in view of Lemma  \ref{unique-m}, it only remains to show that 
$m$ is positive on $\ol U$. Note that $m\geq 0$   
and $m\not\equiv 0$ on $\ol U$.   Since $m$ satisfies \erf{6} and \erf{7},
we infer using again the strong maximum principle together with the Hopf's lemma  that,  if $m$ vanishes at a point 
in $\ol V$, then $m=0$ on $\ol V$, and that, if $m$ vanishes at a point in $\ol U\setminus \ol V$, then $m=0$ on $\ol U\setminus \ol V$.  In particular, 
if $m$ vanishes  at a point in $\ol U\setminus \ol V$, then $m=0$ on $\ol U$, which is impossible. That is, we must have $m>0$ in $\ol U\setminus \ol V$.  
\eproof
We conclude with the last remaining proof.
%\bigskip
\bproof[Proof of Lemma \ref{exist-me}]
 Given $f\in C(\ol U)$, consider the problems 
\beq\label{Ade-ad-f}
\bcases \disp
-a_{ij}\left(\fr{v_{x_i}}{\gl+\ep}\right)_{x_j}-2b_i v_{x_i}=\rho v+f\  \text{ in } \ U,&\\
a_{ij}v_{x_i}\nu_j=0 \  \text{ on } \ \pl U,
\ecases
\eeq 
and
\beq\label{Ade-f}
\bcases \disp
-\left(\fr{(a_{ij}v)_{x_i}}{\gl+\ep}-2b_jv\right)_{x_j}=\rho v+f \ \text{ in } \ U,&\\ \disp
\left(\fr{(a_{ij}v)_{x_i}}{\gl+\ep}-2b_jv\right)\nu_j=0 \ \text{ on } \ \pl U.
\ecases
\eeq
%where $f\in C(\ol U)$ is a given function. 

The Schauder theory (\cite{GiTr}*{Theorem 6.31}) guarantees that, if $f\in C^{0,\ga}(\ol U)$ and $\rho<0$, then  \erf{Ade-ad-f} has a unique classical solution $v\in C^{2,\ga}(\ol U)$ % for some $\gb\in (0,\,1)$, 
and any constant function is a solution of \erf{Ade-ad-f}, for  $\rho=0$ and $f=0$. 
It follows that $\rho=0$ is the principal eigenvalue for the eigenvalue problem corresponding to \erf{Ade-ad-f}. 
\smallskip

For $r>0$, let $S_r$ be the solution operator to \erf{Ade-ad-f}, that is, for
$f\in C^{0,\ga}(\ol U)$,  $v=S_r f\in C^2(\ol U)$ is the unique solution to  \erf{Ade-ad-f} with $\rho=-r$. % we set $v=S_r f$. 
\smallskip

The maximum principle gives 
\[
\|S_rf\|_{C(\ol U)}\leq r^{-1}\|f\|_{C(\ol U)},
\]
which extends the domain of the $S_r$ to $C(\ol U)$. Obviously, for $f\in C(\ol U)$, $S_rf$ is 
the  unique viscosity solution to  \erf{Ade-ad-f}. 
\smallskip

The classical existence and uniqueness theory for elliptic equations does not immediately apply to  \erf{Ade-f}. 
In order to have a good monotonicity with respect to  the boundary conditions, we need to  make a change of  
unknowns. 
\smallskip

Let $\phi, v\in C^2(\ol U)$  and set
$w(x)=e^{-\phi(x)}v(x).$
Straightforward computations yield 
\[
\fr{(a_{ij} v)_{x_i}}{\gl+\ep}-2b_j v
=\fr{(a_{ij} e^{\phi}w)_{x_i}}{\gl+\ep}-2b_j e^{\phi}w
=e^{\phi}\left(\fr{a_{ij}w_{x_i}}{\gl+\ep}+ \left(\fr{a_{ij,x_i}+a_{ij}\phi_{x_i}}{\gl+\ep}-2b_j\right) w\right),
\]
and 
\[\begin{aligned}
\left(\fr{(a_{ij} v)_{x_i}}{\gl+\ep}-2b_j v\right)_{x_j}
&=\left\{
e^{\phi}\left(\fr{a_{ij}w_{x_i}}{\gl+\ep}+ \left(\fr{a_{ij,x_i}+a_{ij}\phi_{x_i}}{\gl+\ep}-2b_j\right) w\right)
\right\}_{x_j}
\\&=e^\phi\Bigg\{\fr{a_{ij}u_{x_ix_j}}{\gl+\ep}
+\left(\fr{2a_{ij}\phi_{x_j}+2a_{ij,x_j}}{\gl+\ep}
-\fr{a_{ij}\gl_{x_j}}{(\gl+\ep)^2}-2b_i
\right)w_{x_i}
\\&\quad +\left(\phi_{x_j} \left(\fr{a_{ij,x_i}+a_{ij}\phi_{x_i}}{\gl+\ep}-2b_j\right)
+ \left(\fr{a_{ij,x_i}+a_{ij}\phi_{x_i}}{\gl+\ep}-2b_j\right)_{x_j}\right)w
\Bigg\}.
\end{aligned}
\]
Choosing $\phi=M \dist(\cdot,\pl U)$ near $\pl U$, with $M>0$ sufficiently large, so that 
\[
D\phi=M\nu \  \text{ on }  \ \pl U,
\]
we may assume that 
\[
\left(\fr{a_{ij,x_i}+a_{ij}\phi_{x_i}}{\gl+\ep}-2b_j\right)\nu_j\geq 0 \ \text{ on  } \ \pl U. 
\]
Let $R>0$ be sufficiently large  so that 
\[
R\geq 1+\phi_{x_j} \left(\fr{a_{ij,x_i}+a_{ij}\phi_{x_i}}{\gl+\ep}-2b_j\right)
+ \left(\fr{a_{ij,x_i}+a_{ij}\phi_{x_i}}{\gl+\ep}-2b_j\right)_{x_j} \ \text{ on  } \ \ol U.
\]
If $v$ is a solution to \erf{Ade-f} with $\rho=-R$, then $w$ satisfies 
\beq\label{transf}
\bcases\disp
-\fr{a_{ij}w_{x_ix_j}}{\gl+\ep} 
-\tilde b_i w_{x_i}+(R-\tilde c) w=e^{-\phi}f \ \text{ in } \ U,&\\[2.5mm]\disp
\fr{a_{ij}w_{x_i}\nu_j}{\gl+\ep}+\tilde d w=0 \  \text{ on } \ \pl U, 
\ecases
\eeq
where 
\[\begin{aligned}
\tilde b_i(x)&=\left(\fr{2a_{ij}\phi_{x_j}+2a_{ij,x_j}}{\gl+\ep}
-\fr{a_{ij}\gl_{x_j}}{(\gl+\ep)^2}-2b_i
\right), 
\\\tilde c(x)&=\phi_{x_j} \left(\fr{a_{ij,x_i}+a_{ij}\phi_{x_i}}{\gl+\ep}-2b_j\right)
+ \left(\fr{a_{ij,x_i}+a_{ij}\phi_{x_i}}{\gl+\ep}-2b_j\right)_{x_j}, 
\\\tilde d(x)&=\left(\fr{a_{ij,x_i}+a_{ij}\phi_{x_i}}{\gl+\ep}-2b_j\right)\nu_j(x) .
\end{aligned}
\]
Note that 
\[%\begin{aligned}
\tilde d\geq 0 \ \text{ on } \ \pl U \ \ \text{and} \ \ 
R-\tilde c\geq 1 \ \text{ on } \  \ol U.
%\end{aligned}
\]
Applying again the maximum principle and the Schauder theory to \erf{transf}, we infer that, if $f\in C^{0,\ga}(\ol U)$, then  
\erf{Ade-f} has a unique classical solution $v\in C^{2,\ga}(\ol U)$ 
%for some $\gb\in (0,\,1)$
and satisfies the maximum principle. 
\smallskip

Let $T$ denote the solution operator for \erf{Ade-f} with $\rho=-R$, that is, if 
$v$ is a classical solution of \erf{Ade-f}, then $Tf=v$. 
\smallskip

As before applying the maximum principle, applied to the function $e^{-\phi}Tf$, we get
\[
\|e^{-\phi}Tf\|_{C(\ol U)}\leq \|e^{-\phi}f\|_{C(\ol U)}, 
\]
and, thus, 
\[
\|Tf\|_{C(\ol U)}\leq e^{2\|\phi\|_{C(\ol U)}}\|f\|_{C(\ol U)},
\]
which allows us to extend the domain of definition of $T$ to $C(\ol U)$. 
\smallskip

Fix  $r>0$ and observe that for any $\psi, f\in C(\ol U)$,
\beq\label{conti-linear}
\Big|\int_U\psi(x) S_r f (x)dx\Big|\leq |U|\,\|\psi\|_{C(\ol U)}
\|S_rf\|_{C(\ol U)}\leq r^{-1}|U|\,\|\psi\|_{C(\ol U)}\|f\|_{C(\ol U)},
\eeq
where $|U|$ denotes the Lebesgue measure of $U$, 
and, hence, that for each $\psi\in C(\ol U)$ the mapping
\[
C(\ol U)\ni f \mapsto \int_U\psi(x) S_r f (x)dx \in \R
\]
is linear and continuous.  Accordingly,  there exists \def\lan{\langle}\def\ran{\rangle}
a unique $S_r^*\psi\in C(\ol U)^*$, the dual space of $C(\ol U)$, such that, if $\lan \cdot, \cdot \ran$ denotes the duality pairing  between  $C(\ol U)^*$ and $C(\ol U)$, then, for all  $\psi\in C(\ol U),$
\[
\int_U\psi(x) S_r f (x)dx=\lan S_r^*\psi, f\ran.% \ \ \ \text{ for } \psi\in C(\ol U).
\]
Using the Riesz representation theorem, we may identify $S_r^*\psi$ as a Radon measure on $\ol U$. 
\smallskip

Since, by \erf{conti-linear}, 
\[
|\lan S_r^\star \psi,f\ran|\leq r^{-1}|U|\,\|f\|_{C(\ol U)}\|\psi\|_{C(\ol U)} \ \ \text{ for }f, \psi\in C(\ol U),
\]
it follows that  $C(\ol U)\ni \psi \mapsto S_r^*\psi\in C(\ol U)^*$ is a continuous and linear map. 
\smallskip

Next, we fix $f\in C^{0,\ga}(\ol U)$ %, with $\ga\in (0,\,1)$, 
and $r\in (0,\,R)$, and solve \erf{Ade-f} for $\rho=-r$. 
\smallskip

Without loss of generality, we may assume that $f\geq 0$ in $\ol U$. 
\smallskip

We use an iteration argument, and consider the  sequence $\{v_n\}_{n\in \N}$ given by 
$v_1\equiv 0$ and, for $n> 1$, by the solution $v_n\in C^2(\ol U)$ of
\beq\label{Ade-f-n}
\bcases \disp
-\left(\fr{(a_{ij}v_{n})_{x_i}}{\gl+\ep}-2b_jv_{n}\right)_{x_j}=-Rv_{n} +(R-r)v_{n-1}+f \ \text{ in } \ U,&\\ \disp
\left(\fr{(a_{ij}v_{n})_{x_i}}{\gl+\ep}-2b_jv_{n}\right)\nu_j=0 \ \text{ on } \ \pl U,
\ecases
\eeq
%where $n\in\N$ and the initial data is given by $v_1=0$.  
Using  the operator $T$,  \erf{Ade-f-n} can be stated as
\beq\label{iteration}
v_{n}=T((R-r)v_{n-1}+f). % \ \ \text{ for }n\in \N.
\eeq
It follows from the maximum principle that, for all $n\in \N$, 
\beq\label{ineq-iteration}
v_n\geq 0 %\ \ \text{ on }\ol U
\  \text{ and } \  v_{n+1}\geq v_n \  \text{ on } \ \ol U. 
\eeq

We show that, in the sense of measures on $\ol U$ and for all $n\in\N,$
\[
v_n\leq S_r^* f. % \ \ \ \text{ for }n\in\N. 
\]
Indeed,  first observe that, in view of \erf{vvf-1},  for any $\phi, \psi\in C^2(\ol U)$, if 
\[
\left(\fr{(a_{ij}\phi)_{x_i}}{\gl+\ep}-2b_j\phi\right)\nu_j=a_{ij}\psi_{x_i}\nu_j=0 \ \text{ on } \ \pl U,
\]
then 
\[
\int_U \phi L_r\psi dx=\int_U L_r^*\phi\psi dx,
\]
where 
\[
L_r\psi=-a_{ij}\left(\fr{v_{x_i}}{\gl+\ep}\right)_{x_i} -2b_j\psi_{x_j} +r\psi 
\ \ \text{ and} \ \ 
L_r^*\phi=-\left(\fr{(a_{ij}\phi)_{x_i}}{\gl+\ep}-2b_j \phi\right)_{x_j}+r\phi. 
\]
We rewrite the above formula as 
\beq\label{duality}
\lan\phi,L_r\psi\ran=\lan L_r^*\phi,\psi \ran
\eeq
and apply it to  $(\phi,\psi)=(v_{n+1}, S_rw)$, with $n\in \N$ and $w\in C^{0,\ga}(\ol U)$, 
to get 
\[
\lan L_r^* v_{n+1}, S_rw\ran=\lan v_{n+1}, L_rS_rw\ran=\lan v_{n+1}, w\ran,
\]
where the first term can be calculated as follows:
\[
\lan  L_r^* v_{n+1}, S_rw\ran
=\lan L_R^* v_{n+1}+(r-R)v_{n+1}, S_r w\ran
=\lan (R-r)(v_{n}-v_{n+1})+f, S_r w\ran. 
\]
Assume now that $w\geq 0$ on $\ol U$, and observe that, by the maximum principle, 
$S_rw\geq 0$ on $\ol U$ and 
\[
\lan (R-r)(v_{n}-v_{n+1})+f, S_r w\ran\leq \lan f, S_rw\ran=\lan S_r^* f, w\ran.
\] 
Hence, if  $ w\geq 0,$
\[
\lan v_{n+1}-S_r^*f, w\ran\leq 0, %  \ \text{ if } w\geq 0, 
\]
which proves that $v_{n+1}\leq S_r^*f$. 
\smallskip

In particular, for all $n\in \N,$ we have 
\beq \label{L1bound}
\int_U v_{n+1} dx \leq \lan S_r^* f, 1\ran \leq r^{-1} |U|\|f\|. % \ \ \ \text{ for } n\in \N.
\eeq 
It follows from  \cite{li83}*{Lemma 3.1} and \cite{GiTr}*{Theorem 6.30} that 
\[
\sup_{n\in\N}\|v_{n}\|_{C^{2,\ga}(\ol U)}<\infty, 
\]
and, hence, for some $v\in C^{2,\ga}(\ol U)$,
\[
\lim_{n\to\infty}v_n = v \ \text{ in } \ C^2(\ol U).
\]
Moreover, it is easily seen that 
 $v$ is a solution to  \erf{Ade-f} with $\rho=-r$. 
 \smallskip
 
 Also, 
using \erf{duality}, we deduce that, if $v\in C^2(\ol U)$ is a solution to  \erf{Ade-f}, with $\rho=-r$, then, 
for $w\in C^{0,\ga}(\ol U)$, % in   
\[
\lan S_r^* f, w \ran=\lan f, S_r w\ran
=\lan L_r^*v, S_r w\ran
=\lan v, L_r S_r w \ran
=\lan v,w\ran,
% \ \ \text{ for } w\in C^\ga(\ol U), \text{ with }\ga\in (0,\,1),
\]
which shows that $v=S_r^*f$ and, in particular, the uniqueness of the  solution to  \erf{Ade-f} for  $\rho=-r$. 
\smallskip

Fix a sequence $(0,\,R)\ni r_k \to 0$ and set 
$m_k=|U|^{-1}S_{r_k}^* r_k,$
where the last $r_k$ denotes the constant function $\ol U\ni x\mapsto r_k$.  
\smallskip
 
Note that $m_k\in C^2(\ol U)$ is a solution to \erf{Ade-f} with $\rho=-r_k$ and $f=|U|^{-1}r_k$, $m_k\geq 0$ on $\ol U$, and 
\[
\lan m_k, 1\ran=|U|^{-1}\lan r_k, S_{r_k}1\ran=|U|^{-1}\lan r_k, r_k^{-1}\ran=1. 
\]   
The Schauder theory (see \cite{li83}*{Lemma 3.1} and \cite{GiTr}*{Theorem 6.30}) imply that  
$\{m_k\}\subset C^{2,\ga}(\ol U)$ is bounded.  %with an appropriate $\gb\in (0,\,1)$. 
\smallskip

Hence, after passing 
to a subsequence, we may assume that, for some $m^\ep\in C^{2,\ga}(\ol U)$, 
\[
\lim_{k\to\infty} m_k=m^\ep \ \text{ in } \ C^2(\ol U). 
\]
It follows immediately that $m^\ep$ is a solution of \ref{Ade}, except 
the positivity of $m^\ep$. 
%\smallskip
%
It is %also 
clear that $m^\ep\geq 0$ and $m^\ep\not \equiv 0$. The strong maximum principle and  Hopf's lemma  
yield that $m^\ep>0$ on $\ol U$.  Hence, $m^\ep$ is a solution of 
\ref{Ade}. 
\smallskip

Let $\mu\in C^2(\ol U)$ satisfy the first two equations of \ref{Ade} and observe that, by the same reasoning as above,  
$\mu>0$ on  $\ol U$. 
%\smallskip
%
Choose $c\in\R $ so that $\mu\leq cm^\ep$ on $\ol U$ 
and $\mu(x_0)=cm^\ep(x_0)$ for some $x_0\in\ol U$. 
%\smallskip
%
Applying  the strong maximum principle and Hopf's lemma   to $c m^\ep-\mu$, we find that, 
if $\mu\not\equiv cm^\ep$ on $\ol U$, then
$\mu<c m^\ep$ in  $\ol U$, which is a contradiction.
%\smallskip
It follows  that $\mu=cm^\ep$.  This also implies the uniqueness of a solution of \ref{Ade} and 
the proof is complete.    
\eproof

%\bigskip

%%%%%%%%%%%%%%%%%%%%%%%%%%%%%%%%%%%%%%%%%%%%%%%%%%%%%%%%%%%%%%%%%%%%%%%%%%%%

\section{The initial value problem \erf{takis5}}

In this section we briefly sketch the proof of  the following theorem.  

\begin{thm} \label{ce} Let $\mu>0$, $T>0$ and 
assume \erf{ass1} and \erf{ass2}. 

\emph{(i)} If $v, w$ are  bounded, respectively  upper and lower semicontinuous on $\R^{2n}\tim[0,\,T),$ viscosity sub- and super-solutions 
to  $u_t=L^\mu u$ in $\R^{2n}\in (0,\,T)$ and  $v(\cdot,\cdot,0)\leq w(\cdot,\cdot,0) $ in $\R^{2n},$
then, $v\leq w$ on 
$\R^{2n}\tim[0,\,T)$. 

\emph{(ii)} Let $u_0^\mu\in\BUC(\R^{2n})$. There exists a unique viscosity solution 
$u^\mu\in\bc(\R^{2n}\tim[0,\,T))$ to  \erf{takis5}. 
\end{thm}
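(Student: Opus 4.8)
The plan is to treat $L^\mu$, for the fixed parameter $\mu>0$, as a second-order operator that is uniformly elliptic in the $y$-variables by \eqref{ass3} with bounded Lipschitz coefficients by \eqref{ass1}, whose \emph{only} departure from the classical framework is the linearly growing, non-globally-Lipschitz term $-\mu^{-1}\lam(x)y_i$ in the $y$-drift together with the transport term $y_iu_{x_i}$; this is the one genuine obstacle, and everything rests on the fact that, since $\lam\ge\theta>0$ by \eqref{ass2}, this drift is \emph{confining}. First I would record the Lyapunov function it supplies: with $\langle z\rangle=(1+|z|^2)^{1/2}$ set $\Phi(x,y):=\langle x\rangle+C_0\langle y\rangle^2$ for a constant $C_0\ge2$. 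Since $L^\mu$ contains no second $x$-derivatives, a one-line computation using \eqref{ass2}--\eqref{ass3} gives, on $\R^{2n}$,
\[
L^\mu\Phi\le\tfrac{C_0}{\mu^2}\tr a+\tfrac{2C_0}{\mu}\bigl(|b|\,|y|-\theta|y|^2\bigr)+|y|\le K_0
\]
for some $K_0$ depending only on $\mu$ and the data; moreover $\Phi$ is coercive and $\Phi\ge c(1+|y|)$. This single $\Phi$ localizes every argument at spatial infinity and, since $L^\mu$ applied to a bounded smooth function is only $O(|y|)$ (because of the $-\mu^{-1}\lam y$ term), it also dominates $L^\mu g$ for such $g$.

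For the comparison principle (i) I would run the standard parabolic doubling of variables with the penalization $-\eta(\Phi(x,y)+\Phi(x',y'))$ to compactify and $-\sigma/(T-t)$ to handle the time endpoint. If $\sup(v-w)=:2\delta_0>0$, then for small $\eta,\sigma$ the penalized supremum $M_\varepsilon$ is still $\ge\delta_0>0$ and, by coercivity of $\Phi$, is attained at a point in a compact set independent of $\varepsilon$ (depending on $\eta$); the usual estimate forces the quadratic penalty to vanish as $\varepsilon\to0$, and if the maximizing time is $0$ we contradict $v(\cdot,\cdot,0)\le w(\cdot,\cdot,0)$. Otherwise the parabolic theorem on sums applies; when subtracting the two viscosity inequalities, the $y$-second-order terms are controlled as usual via the matrix inequality and the Lipschitz (hence $\sqrt{\cdot}$-Lipschitz) bound on $a$, while the drift and transport differences are controlled through
\[
|\lam(\hat x)\hat y_i-\lam(\hat x')\hat y_i'|\le C|\hat x-\hat x'|\,|\hat y|+C|\hat y-\hat y'|,
\]
where $|\hat y|$ is bounded for fixed $\eta$, so the dangerous cross term $|\hat x-\hat x'|\,|\hat y|\,|\hat y-\hat y'|/\varepsilon$ is $o_\varepsilon(1)$; the $\eta\Phi$-penalization contributes $O(\eta)$ and $\sigma/(T-\hat t)^2\ge\sigma/T^2$, so one ends with $0<\sigma/T^2\le o_\varepsilon(1)+O(\eta)$, a contradiction after sending $\varepsilon\to0$ and then $\eta\to0$. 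The same proof works unchanged when only one of $v,w$ is bounded and the other is merely bounded above, resp. below, by a multiple of $\Phi$.

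For (ii) I would use Perron's method; uniqueness is immediate from (i). For existence, the constants $\pm\|u^\mu_0\|$ form an ordered bounded sub/supersolution pair, so Perron produces a bounded viscosity solution $u^\mu$ of $u^\mu_t=L^\mu u^\mu$ on $\R^{2n}\times(0,T)$. To get continuity up to $t=0$ with $u^\mu(\cdot,\cdot,0)=u^\mu_0$, for each $\rho>0$ I would mollify $u^\mu_0$ to $g_\rho\in C^\infty(\R^{2n})$ with bounded derivatives and $\|g_\rho-u^\mu_0\|\le\rho$, and then, using $|L^\mu g_\rho|\le C_\rho(1+\Phi)$ together with $L^\mu\Phi\le K_0$, check that for $A_\rho,B_\rho$ large enough the functions
\[
\xi^\pm_\rho(x,y,t):=g_\rho(x,y)\pm\rho\pm\bigl(A_\rho\,t\,(1+\Phi(x,y))+B_\rho t^2\bigr)
\]
are, respectively, a classical super- and subsolution of $u_t=L^\mu u$, with $\xi^-_\rho(\cdot,\cdot,0)\le u^\mu_0\le\xi^+_\rho(\cdot,\cdot,0)$ and $\xi^\pm_\rho\to g_\rho\pm\rho$ locally uniformly as $t\to0$. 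Constructing $u^\mu$ by Perron's method between the envelopes $\sup_\rho\xi^-_\rho$ and $\inf_\rho\xi^+_\rho$, truncated by $\pm\|u^\mu_0\|$ to keep it bounded, and invoking the extended comparison of (i) to squeeze $\xi^-_\rho\le u^\mu\le\xi^+_\rho$, one lets $t\to0$ and then $\rho\to0$ to conclude $u^\mu\in\bc(\R^{2n}\times[0,T))$ with the prescribed initial data.

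I expect the \emph{only} real difficulty to be the non-Lipschitz, linearly growing drift $-\mu^{-1}\lam(x)y$ together with the transport term $y\cdot\nabla_x$; everything else is routine viscosity-solution bookkeeping. It is precisely the sign condition $\lam\ge\theta>0$ that makes the quadratic-in-$y$ Lyapunov function $\Phi$ available, and $\Phi$ does double duty: as the penalization that compactifies the doubling argument in (i), and as the growth rate of the barriers needed to resolve the initial layer in (ii).
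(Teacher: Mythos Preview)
Your plan is correct and follows essentially the same strategy as the paper: both hinge on a quadratic-in-$y$ Lyapunov function (the paper uses $p(x,y)=\langle x\rangle+\tfrac12|y|^2$, you use $\Phi$) together with the confining estimate $L^\mu p\le C-c|y|^2$ furnished by $\lam\ge\theta>0$, then mollified-data barriers plus Perron for existence.

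The one noteworthy difference is in (i). You carry out the full parabolic doubling-of-variables with Lyapunov penalization $-\eta\Phi$, which is correct but laborious. The paper takes a shortcut: since $q(x,y,t):=p(x,y)+Ct$ is a smooth supersolution, $v_\delta:=v-\delta q$ is still a bounded subsolution and, by coercivity of $p$, satisfies $v_\delta\le w$ outside some bounded set; one then simply cites the standard bounded-domain comparison (\cite{CIL}*{Theorem 8.2}) on that set and lets $\delta\to0$. This avoids re-deriving the doubling estimates and sidesteps the cross-term bookkeeping you sketch. For (ii) the paper's barriers $u_0^{\mu,\delta}\pm(\delta+\delta p+M_\delta t)$ exploit the \emph{negative} quadratic term in $L^\mu p\le C-c|y|^2$ to absorb the $O(|y|)$ growth of $L^\mu u_0^{\mu,\delta}$ with a constant $M_\delta$, whereas your $\xi^\pm_\rho$ compensate with the factor $t\Phi$; both work.
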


The uniform continuity assumption on $u_0^\mu$ can be relaxed and replaced by the continuity of $u_0^\mu$ in the above theorem. 
But this strong assumption makes it easy to prove assertion (ii). 
\smallskip

Let $\lan x\ran:=(|x|^2+1)^{1/2}$ and set 
\begin{equation}\label{takis199}
p(x,y):=\lan x\ran+\fr 12 |y|^2 \ \ \text{ for }(x,y)\in\R^{n}\tim\R^n.
\end{equation}
%where $\lan x\ran:=(|x|^2+1)^{1/2}$.  

Using   
the positivity of $\gl$ (see  \erf{ass2}) straightforward calculations imply that there exist exist $c>0$ and $C>0$ such that, for   all $(x,y)\in\R^{2n},$
\begin{equation}\label{takis200}
L^\mu p(x,y)\leq C-c|y|^2. % \ \ \text{ for } (x,y)\in\R^{2n}. 
\end{equation}

\bproof[Proof of Theorem \ref{ce}] 
Let $p$ and  $C, c$ as in \eqref{takis199} and \eqref{takis200}, 
set 
$q(x,y,t)=p(x,y)+Ct$ for $(x,y,t)\in\R^{2n}\tim[0,\,T]$, 
and note that, for any $(x,y,t)\in\R^{2n}\tim(0,\,T)$,
\beq\label{ce2}
L^\mu q(x,y,t)=L^\mu p(x,y)\leq C-c|y|^2=q_t(x,y,t)-c|y|^2. 
\eeq

To prove (i), 
we fix  $\gd>0$ and observe that 
\[
v_\gd(x,y,t)=v(x,y,t)-\gd q(x,y,t), % \ \ \text{ for }\ (x,y,t)\in\R^{2n}\tim[0,\,T). 
\] 
 is an  upper semicontinuous 
 subsolution to $u_t=L^\mu u$ in 
$\R^{2n}\tim(0,\,T)$.  
\smallskip 

Since $v$ and $w$ are bounded and 
\[
p(x,y) \to \infty  \ \ \text{ as } \ \  |x|+|y|\to \infty,
\]
we can choose a bounded open subset $\gO$ of $\R^{2n}$ so that 
\beq\label{ce3}
v_\gd(x,y,t)\leq w(x,y,t) \ \ \text{ if } \ \ (x,y)\not\in \gO,
\eeq
while  
\[
v_\gd(x,y,0)\leq v(x,y,0)\leq w(x,y,0) \ \ \text{ for all } \ \ (x,y)\in\R^{2n}.
\]
Applying the  standard comparison theorem (for instance, 
\cite{CIL}*{Theorem 8.2} and its proof), we find that 
$v_\gd\leq w$ on $\gO\tim[0,\,T)$, which 
together with \erf{ce3} yields that $v_\gd\leq w$ on $\R^{2n}\tim[0,\,T)$. 
Letting  $\gd\to 0$ implies the claim.
\smallskip

We turn to (ii).  Since $u_0^\mu\in\BUC(\R^{2n})$, for each $\gd\in (0,\,1)$, we may choose $u_0^{\mu,\gd}
\in \bc^2(\R^{2n})$ so that $\|u_0^{\mu,\gd}-u_0^\mu\|<\gd$. 
Obviously, there exists a constant $C_\gd>0$ so that 
\[
|L^\mu u_0^{\mu,\gd}|\leq C_\gd(1+|y|) \ \ \text{ on } \ \ \R^{2n}. 
\]
Select  $M_\gd>0$ so that
\[
|L^\mu u_0^{\mu,\gd}|+\gd L^\mu p\leq 
C_\gd(1+|y|)+\gd(C-c|y|^2)\leq M_\gd \ \ \text{ on } \ \ \R^{2n},
\] 
set
\[
v_\gd^\pm(x,y,t)=u_0^{\mu,\gd}(x,y)\pm (\gd+\gd p(x,y)+M_\gd t)
\ \ \text{ on } \ \\R^{2n}\tim[0,\,T], 
\]

observe that $v_\gd^-, v_\gd^+\in C^2(\R^{2n}\tim[0,\,T])$ 
are a sub- and super-solution of $u_t=L^\mu u$ in 
$\R^{2n}\tim (0,\,T)$, and, finally, for $(x,y,t)\in\R^{2n}\tim[0,\,T]$, 
\beq\label{ce6}
\bcases
v_\gd^\pm(x,y,0)=u_0^{\mu,\gd}(x,y)\pm\gd(1+p(x,y)),&\\[2pt]
v_\gd^-(x,y,t)\leq u_0^\mu(x,y)\leq  v_\gd^+(x,y,t).
\ecases
\eeq
The stability property of viscosity 
solutions yields that, if, in  $\R^{2n}\tim[0,\,T]$, 
\[
v^+ =\inf_{0<\gd<1} v_\gd^+  \ \text{ and   } \ \  v^- =\sup_{0<\gd<1} v_\gd^+,  
\]
%and 
%\[
%v^+=\inf_{0<\gd<1} v_\gd^- \ \ \text{ in  } \R^{2n}\tim[0,\,T], 
%\]
then the upper and lower semicontinuous envelopes  
$w^-$ of $v^-$  and $w^+$ of $v^+$ are respectively  a 
viscosity sub- and super-solution of 
$u_t=L^\mu u$ in $\R^{2n}\tim (0,\,T)$. % respectively.  
Moreover, it follows from \erf{ce6}  that,   
for $(x,y,t)\in\R^{2n}\tim[0,\,T]$, 
\beq\label{ce7}\bcases
v^\pm(x,y,0)=u_0^\mu(x,y), &\\[2pt]
v^-(x,y,t)\leq w^-(x,y,t)\leq u_0^\mu(x,y)\leq w^-(x,y,t) \leq v^+(x,y,t). &
\ecases
\eeq
According to Perron's method (\cite{CIL}), if we set
\[\begin{aligned}
u^\mu(x,y,t)=\sup\{u(x,y,t)\mid &u \text{ is a subsolution of } \ u_t=L^\mu u \ \text{ in } \ \R^{2n}\tim(0,\,T),\, 
\\&w^-\leq u\leq w^+ \ \text{ on } \R^{2n}\tim[0,\,T)\}, 
\end{aligned}\]
then $u^\mu$ is a solution to  $u_t=L^\mu u$ in $\R^{2n}\tim(0,\,T)$ in the sense that the upper and lower semicontinuous envelopes $(u^{\mu})^*$ and $(u^\mu)_*$ of $u^\mu$ are respectively a sub- and super-solution to  $u_t=L^\mu u$ in 
$\R^{2n}\tim(0,\,T)$.  
\smallskip
 
Note that $v^-,-v^+$ are lower semicontinuous on $\R^{2n}\tim[0,\,T]$, and hence, by \erf{ce7}, $(u^\mu)^*(x,y,0)=(u^\mu)_*(x,y,0)
=u_0^\mu(x,y)$ for all $(x,y)\in\R^{2n}$. 
\smallskip

Thus, by the comparison assertion (i), we obtain $(u^{\mu})^*\leq 
(u^\mu)_*$ on $\R^{2n}\tim[0,\,T)$, 
and we  
conclude that $u^\mu \in \bc(\R^{2n}\tim[0,\,T))$ 
and it is a solution of \erf{takis5}.  
\smallskip

The uniqueness of $u^\mu$ is an 
immediate consequence of (i). 
\eproof 

\begin{bibdiv}
\begin{biblist}
\bib{CIL}{article}{
   author={Crandall, Michael G.},
   author={Ishii, Hitoshi},
   author={Lions, Pierre-Louis},
   title={User's guide to viscosity solutions of second order partial
   differential equations},
   journal={Bull. Amer. Math. Soc. (N.S.)},
   volume={27},
   date={1992},
   number={1},
   pages={1--67},
  issn={0273-0979},
  review={\MR{1118699}},
   doi={10.1090/S0273-0979-1992-00266-5},
}

\bib{E1}{article}{
   author={Evans, Lawrence C.},
   title={The perturbed test function method for viscosity solutions of
   nonlinear PDE},
   journal={Proc. Roy. Soc. Edinburgh Sect. A},
   volume={111},
   date={1989},
   number={3-4},
   pages={359--375},
   issn={0308-2105},
  % review={\MR{1007533}},
   doi={10.1017/S0308210500018631},
}
\bib{E2}{article}{
   author={Evans, Lawrence C.},
   title={Periodic homogenisation of certain fully nonlinear partial
   differential equations},
   journal={Proc. Roy. Soc. Edinburgh Sect. A},
   volume={120},
   date={1992},
   number={3-4},
   pages={245--265},
   issn={0308-2105},
  % review={\MR{1159184}},
   doi={10.1017/S0308210500032121},
}
%\bib{FKS}{article}{
%   author={Fabes, Eugene B.},
%  author={Kenig, Carlos E.},
%  author={Serapioni, Raul P.},
% title={The local regularity of solutions of degenerate elliptic equations},
%   journal={Comm. Partial Differential Equations},
%   volume={7},
%   date={1982},
%   number={1},
%   pages={77--116},
%   issn={0360-5302},
%   review={\MR{643158}},
%   doi={10.1080/03605308208820218},
%}
\bib{FrH1}{article}{
   author={Freidlin, Mark},
   author={Hu, Wenqing},
   title={Smoluchowski-Kramers approximation in the case of variable
   friction},
   note={Problems in mathematical analysis. No. 61},
   journal={J. Math. Sci. (N.Y.)},
   volume={179},
   date={2011},
   number={1},
   pages={184--207},
   issn={1072-3374},
  % review={\MR{3014105}},
   doi={10.1007/s10958-011-0589-y},
}
\bib{FrHW}{article}{
   author={Freidlin, Mark},
   author={Hu, Wenqing},
   author={Wentzell, Alexander},
   title={Small mass asymptotic for the motion with vanishing friction},
   journal={Stochastic Process. Appl.},
   volume={123},
   date={2013},
   number={1},
   pages={45--75},
   issn={0304-4149},
  % review={\MR{2988109}},
   doi={10.1016/j.spa.2012.08.013},
}
\bib{GiTr}{book}{
   author={Gilbarg, David},
   author={Trudinger, Neil S.},
   title={Elliptic partial differential equations of second order},
   series={Classics in Mathematics},
   note={Reprint of the 1998 edition},
   publisher={Springer-Verlag, Berlin},
   date={2001},
   pages={xiv+517},
   isbn={3-540-41160-7},
  % review={\MR{1814364}},
}
\bib{Li83}{article}{
   author={Lieberman, Gary M.},
   title={The conormal derivative problem for elliptic equations of
   variational type},
   journal={J. Differential Equations},
   volume={49},
   date={1983},
   number={2},
   pages={218--257},
   issn={0022-0396},
%   review={\MR{708644}},
   doi={10.1016/0022-0396(83)90013-X},
}
\bib{Patrizi}{article}{
   author={Patrizi, Stefania},
   title={Principal eigenvalues for Isaacs operators with Neumann boundary
   conditions},
   journal={NoDEA Nonlinear Differential Equations Appl.},
   volume={16},
   date={2009},
   number={1},
   pages={79--107},
   issn={1021-9722},
%   review={\MR{2486349}},
   doi={10.1007/s00030-008-7042-z},
}

\end{biblist}
\end{bibdiv}

\end{document}